\newtheorem{theorem}{Theorem}[section]
\newtheorem{proposition}[theorem]{Proposition}
\newtheorem{lemma}[theorem]{Lemma}
\theoremstyle{definition}
\newtheorem{definition}[theorem]{Definition}
\theoremstyle{remark}
\newcounter{smalllist}
\DeclareMathOperator*{\dist}{dist}
\DeclareMathOperator*{\sgn}{sgn}
\DeclareMathOperator*{\curl}{curl}
\numberwithin{equation}{section}
\newcommand{\lbl}{\label}
\newcommand{\ess}{\text{\rm{ess}}}
\newcommand{\beq}{\begin{equation}}
\newcommand{\eeq}{\end{equation}}
\newcommand{\bal}{\begin{align}}
\newcommand{\eal}{\end{align}}
\newcommand{\bals}{\begin{align*}}
\newcommand{\eals}{\end{align*}}
\newcommand{\bbR}{{\mathbb{R}}}
\newcommand{\bbP}{{\mathbb{P}}}
\newcommand{\bbE}{{\mathbb{E}}}
\newcommand{\calB}{{\mathcal B}}
\newcommand{\calF}{{\mathcal F}}
\newcommand{\calH}{{\mathcal H}}
\newcommand{\eps}{\varepsilon}
\newcommand{\al}{\alpha}
\newcommand{\ga}{\gamma}
\newcommand{\til}{\tilde}
\newcommand{\grad} {\nabla}
\newcommand{\dx} {\; \mathrm{d} x}
\newcommand{\dd} {\; \mathrm{d}}
\DeclareMathOperator*{\osc}{osc}
\DeclareMathOperator{\dv}{div}
\newcommand{\beqn}{\begin{eqnarray}}
 \newcommand{\eeqn}{\end{eqnarray}}
 \newcommand{\be}{\begin{equation}}
 \newcommand{\ee}{\end{equation}}
 \newcommand{\ba}{\begin{array}}
 \newcommand{\ea}{\end{array}}
 \newcommand{\pa}{\partial}
 \newcommand{\ci}{\cite}
 \newcommand{\la}{\label}
\newcommand{\Om}{\Omega}
\newcommand{\lb}{\lambda}
\newcommand{\na}{\nabla}
\newcommand{\M}{\mathcal{M}}
\def\R{{\rm I\kern-.1567em R}}
\def\M{{\rm I\kern-.1567em M}}
\def\div {{\rm div}}
\def\dist{{\rm dist}}
\def\ess{{\rm ess}}
\def\osc {{\rm osc}}
\def\ione {par1'}
\begin{document}
\title{On divergence-free drifts}
\author{Gregory~Seregin\lowercase{$^a$}}
\address{$^a$University of Oxford, 24-29 St Giles', Oxford OX1 3LB, UK.}
\author{Luis~Silvestre\lowercase{$^b$}}
\address{$^b$University of Chicago, 5374 University Ave., Chicago, IL 60637, USA.}
\author{Vladim\' ir~\v Sver\'ak\lowercase{$^c$}}
\address{$^c$University of Minnesota, 206 Church St. SE, Minneapolis, MN 55455, USA.}
\author{Andrej~Zlato\v s\lowercase{$^d$}}
\address{$^d$University of Wisconsin---Madison, 480 Lincoln Dr., Madison, WI 53706, USA.}

\maketitle


\section{Introduction}
This paper is motivated by questions about the behavior of  solutions of
elliptic and parabolic equations with low regularity drift terms. A classical
example is
\beq \lbl{par1}
\partial_tu+b\cdot\nabla u-\Delta u = 0
\eeq
considered in $\bbR^n\times ]0,\infty[$, where $b$ is a time-dependent
vector field in $\bbR^n$.
Of particular interest to us will be the case of divergence-free $b$ (i.e.,   $\dv b = 0$), which is relevant for applications to incompressible flows.

To describe the regularity conditions on the drift term, it is useful to
recall some elementary dimensional analysis.
Equation (\ref{par1}) is invariant under the following scaling transformations:
\begin{align}
u(x,t)  & \to  u^{(\lambda)} (x,t) = u(\lambda x, \lambda^2 t)\,\, , \lbl{scaling_u} \\
b(x,t)  & \to b^{(\lambda)} (x,t) = \lambda b(\lambda x,\lambda^2 t)\,\, ,\lbl{scaling_b}
\end{align}
where $\lambda >0$. Following the usual convention (see, e.g., \cite{CKN}),
we can say that $u$ has dimension $0$ and $b$ has dimension $-1$.
The classical theory (see, e.g., \cite{LSU}) studies the question of under which
conditions \eqref{par1}
can be considered as a perturbation
of the heat equation. The required regularity on $b$ is usually
expressed as $b\in \calB$, with $\calB$ a suitable function space.
Typically the borderline spaces for which one can still prove most of the
deeper results\footnote{such as, for instance, the Harnack inequality for positive solutions}
are scale-invariant under the scaling (\ref{scaling_b}) of $b$, that is,  $||b^{(\lambda)}||_{\calB}=||b||_{\calB}$  (see, e.g., \cite{LSU, Nazarov-Uraltseva})~\footnote{
For example, the Lebesgue spaces $L_{q,p}=L^p_t L^q_x$
are scale-invariant if and only if $2/p+q/n=1$.}.
The reason for this is as follows. The arguments in the proofs of the
``deeper properties"\footnote{The definition of what is meant by a ``deeper property" is of course somewhat
ambiguous. We already mentioned the Harnack inequality as an example. On the other hand, the weak maximum principle would not be considered as such in this context.}
typically have to work on all (small) scales
and  we therefore need to control $b$ on all scales, which naturally leads to the
scale-invariant spaces.

Similar considerations can be made for elliptic equations of the form
\beq \lbl{ell1}
-\Delta u + b\cdot\nabla u = 0\,\,,
\eeq
with the elliptic scaling
\begin{align}
u(x)  & \to  u^{(\lambda)} (x) = u(\lambda x)\,\, , \lbl{e_scaling_u} \\
b(x)  & \to b^{(\lambda)} (x) = \lambda b(\lambda x)\, .\lbl{e_scaling_b}
\end{align}

Let us now consider the condition $\dv b=0$ and its consequences. (The relevant
references include, for example, \cite{Zh, Osada} in the parabolic case and
\cite{Mazja, Kontovourkis} in the elliptic case.)
Among the most important consequences 
are the following\footnote{To derive these consequences, one needs
to assume that the formal integration by parts used to obtain
them is valid. We are ignoring this technical issue for the moment.}.

\noindent
(i) The energy identity
\beq \lbl{energy}
\int_{\bbR^n} |u(x,t_2)|^2\, dx + \int_{t_1}^{t_2}\int_{\bbR^n}|\nabla u|^2\, dx\,dt =
\int_{\bbR^n} |u(x,t_1)|^2\, dx
\eeq
is exactly the same as for the heat equation.

\noindent
(ii) The integral $\int_{\bbR^n} u(x,t)\,dx$ is conserved:
\beq \lbl{l1}
\int_{\bbR^n}u(x,t_2)\, dx = \int_{\bbR^n}u(x,t_1)\, dx\, .
\eeq

J. Nash showed in his famous paper \cite{N}  (inequality (8) on page 936)
that one can obtain from (i) and (ii) the point-wise upper bound
\beq \lbl{nash}
|G(x,t;y,s)|\le \frac C {(t-s)^{n/2}}\,
\eeq
on the fundamental solutions $G(x,t;y,s)$, with $C$ depending only on the dimension.
Therefore, this bound also holds for solutions of (\ref{par1}) when $\dv b=0$,
with practically no other assumptions on $b$. The heuristic behind this
estimate is that in an incompressible fluid, mixing can  enhance the decay
of, say, a temperature field but it cannot slow it down. Nash's simple argument
proving this heuristics is very elegant. There are many other results in this direction,
see for example \cite{Zh, Osada}.
Bound~(\ref{nash}) can also be integrated in time to obtain (global) estimates of
$\sup_x |u(x)|$ for the elliptic
problem
\beq
- \Delta u + b \cdot\nabla u = f\, ,
\eeq
 with $f \in L_{n/2+\delta}$,  a divergence-free $b$, and practically no other assumptions.

Since the condition $\dv b = 0$ has such strong consequences for the
$L_\infty$-bounds, it is natural to ask about its effects on
other properties of the solutions. For instance, can the standard assumptions
on the drift term $b$ needed, say, for the Harnack inequality
be relaxed when $\dv b = 0$? Similar questions have been
considered, for example, in \cite{Osada, Zh}.

It turns out that the condition $\dv b =0$ can be used to relax
the regularity assumptions on $b$ under which one can
prove the Harnack inequality and other results. However,
the effects are not as dramatic as in the case of Nash's
upper bound~(\ref{nash}), for which
not much is needed beyond $\dv b = 0$.
In particular, it seems that even with the condition $\dv b=0$
one cannot significantly ``break the scaling". Indeed, to be able to prove the
``deeper regularity properties" of the solutions
(as discussed above), we still need to assume
that $b$ belongs, at least locally, to a scale-invariant
space~$\calB$. The norm can be weaker than in the absence
of the assumption $\dv b = 0$, but it still has to
be scale invariant or stronger on the small scales.
For example, the results of \cite{Osada} imply that
the Harnack inequality, H\"older continuity of solutions,
and the Aronson estimate
for fundamental solutions\footnote{
$c_1(t-s)^{-n/2}\exp[{c_2|x-y|^2/(t-s)}]\le G(x,t;y,s)\le
c_3(t-s)^{-n/2}\exp[{c_4|x-y|^2/(t-s)}]$
, see \cite{Ar}.
}
remain true when $b\in L_{\infty} (L_{\infty}^{-1})$,
where $L_{\infty}^{-1}$ denotes distributions which are
first derivatives of bounded measurable functions.
This should be compared to the condition $b\in L_{n,\infty}$,
which naturally comes up when the assumption $\dv b = 0$ is
dropped\footnote{Strictly speaking, as far as we are aware,
when we do not assume $\dv b=0$,
most of the regularity results above are proved for
$b\in L_{q,p}$ with $2/p+n/q=1$ and $p<\infty$
(see \cite{Nazarov-Uraltseva}), but not in the borderline
case $p=\infty, \,\,q=n$.}.
Note that both $L_n$ and $L_\infty^{-1}$ are scale-invariant.

The assumption $\dv b=0$ can be used to reformulate equation~(\ref{par1})
in the following way. When $\dv b=0$, we can write
$b=\dv d$ for an anti-symmetric tensor
$d=(d_{ij})$\footnote{For $n=3$, this corresponds to introducing the vector
potential $\tilde d$ such that  $b=\curl \tilde d$.}.
Moreover, by introducing a suitable
``gauge condition"\footnote{such as $ d_{kl,j}+ d_{jk,l}+ d_{lj,k}=0$,
which for $n=3$ and $b=\curl \tilde d$ corresponds to $\dv \tilde d = 0$},
we can assume that the derivatives of $d$ have similar regularity as $b$.
Since $b$ has dimension $-1$ with respect to the natural scaling of~(\ref{par1}),
the tensor $d$ has dimension 0, that is, it scales as
\beq \lbl{scaling_c}
d(x,t) \to d(\lambda x, \lambda^2 t)
\eeq
when $u$ is scaled by~(\ref{scaling_u}).

Replacing $b$ by the potential $d$, equation~(\ref{par1}) becomes
\beq \lbl{par1'}
\partial_t u-{\rm div} (A\nabla u) = 0
\eeq
where $A=\mathbb I+d$.
This is a divergence-form equation with a non-symmetric leading term.
Such equations (including the versions with lower-order terms) have been
studied in \cite{Osada} under the assumptions that the coefficients
$a_{ij}$ are bounded measurable functions satisfying the ellipticity
condition
\beq\lbl{ellipticity}
(A\xi)\cdot\xi\ge\nu|\xi|^2\,.
\eeq
The results of \cite{Osada} show, roughly speaking, that most of the results which are
valid for symmetric $A$ are also true in the non-symmetric case.
The transformation of~(\ref{par1}) to~(\ref{par1'}) has been used in many other
works (see, for instance, \cite{Papanicolaou}).

In the elliptic case, Mazja and Verbitsky \cite{Mazja} studied  (among other things) the bi-linear
form
\beq \lbl{bilinear}
(u,v) \ \to \int_{\bbR^n} (A \nabla u)\cdot \nabla v \, dx\, .
\eeq
The form is obviously continuous in $\dot H^1$ when $A$ is bounded,
but it turns out that the boundedness of the coefficients is not a necessary
condition for the boundedness of the form. The form is still continuous
on $\dot H^1$ if the symmetric part of $A$ is bounded and the anti-symmetric
part of $A$ is in the John-Nirenberg space $BMO$ (bounded mean oscillation).
This is a consequence of the following two facts:

\noindent
(i) If $A$ is anti-symmetric,
the form~(\ref{bilinear}) can be factored through the determinants ${\frac {\partial(u,v)}{\partial(x_i,x_j)}}$.

\noindent
(ii) The determinants have ``better than expected" regularity: when $u,v$ are in $\dot H^1$, the
determinants are not only in $L_1$, but they are in fact in the Hardy space $\calH^1$, the dual space of $BMO$ (see \cite{CMLS}).

It is natural to expect that much of the classical regularity results for
elliptic and parabolic equations with measurable coefficients in divergence
form will remain valid if the leading part $A$ is of the
form $A=a+d$, with $a$ symmetric, bounded
and satisfying the usual ellipticity condition~(\ref{ellipticity}),
and $d$ anti-symmetric and belonging to $BMO$ in the elliptic
case, and to $L_\infty(BMO)$ in the parabolic case.

Indeed, let $Q_-=\mathbb R^n\times \mathbb R_-$ (with $\mathbb R_-=]-\infty,0[$) and assume that
\be\la{i2}A=a+d,\ee
where  $a\in L_\infty(Q_-;\mathbb M^{n\times n})$ is a symmetric matrix satisfying
\be\la{i3}\nu \mathbb I\leq a\leq \nu^{-1}\mathbb I\ee
and $d \in L_\infty(\mathbb R_-;BMO(\mathbb R^n;\mathbb M^{n\times n}))$
 is a skew symmetric matrix, that is,
\be\la{i4}d=-d^* \ee
for all $(x,t)\in Q_-$. Here $\nu>0$, $\mathbb I$ is the identity in the space $\mathbb M^{n\times n}$ of $n\times n$-matrices and $d^*$ is the transpose of $d$. Let also $B(x,r)$ be the ball of radius $r$ centered at $x\in\mathbb R^n$, and $Q(z,r)=B(x,r)\times ]t-r^2,t[$ a parabolic ball in $\bbR^{n+1}$ centered at point $z=(x,t)$. Finally, let $B=B(0,1)$ and $Q=Q(0,1)$.
We then prove the following parabolic  Harnack inequality and Liouville theorem for  suitable weak solutions (see Definition \ref{id1}) to \eqref{par1'}.

\begin{theorem}\la{nt1}
If the matrix $A$ satisfies  conditions (\ref{i2})--(\ref{i4}), then there exists  $C>0$, depending only on $n$, $\nu$, and $\|d\|_{L_\infty(-1,0;BMO(B))}$, such that for any nonnegative suitable weak solution $u$ to (\ref{par1'}) on $Q$  we have
\be\la{n1}
\sup\limits_{(y,s)\in Q(z_R,R/2)}u(y,s)\leq C\inf\limits_{(y,s)\in Q(z,R/2)}u(y,s),
\ee
whenever   $Q(z,R)\subset Q$. Here, $z_R=(x,t-R^2/2)$.

\begin{theorem}\la{it2}
If the matrix $A$ satisfies  conditions (\ref{i2})--(\ref{i4}), then the only  bounded  ancient suitable weak solutions to (\ref{par1'}) on $Q_-$ are the constant functions.
\end{theorem}

\end{theorem}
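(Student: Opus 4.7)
The plan is to deduce Theorem~\ref{it2} from the Harnack inequality of Theorem~\ref{nt1} through a standard oscillation-decay argument applied to the bounded ancient solution $u$. Setting $M:=\sup_{Q_-}u$ and $m:=\inf_{Q_-}u$, the goal will be to show $M=m$.

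The first step will be to upgrade Theorem~\ref{nt1} to a Harnack inequality that applies on \emph{every} parabolic cylinder $Q(z_0,R)\subset Q_-$ with the same constant $C=C(n,\nu,\|d\|_{L_\infty(\mathbb R_-;BMO(\mathbb R^n))})$. This will rely on the fact that the ellipticity condition \eqref{i3} and the $BMO$ seminorm of $d$ are both invariant under spatial translations and the parabolic rescaling $(x,t)\mapsto(\lambda x,\lambda^2 t)$, so rescaling any such $Q(z_0,R)$ to the unit cylinder $Q$ yields a solution satisfying the hypotheses of Theorem~\ref{nt1} with unchanged constants. A small routine check is that if $u$ is a suitable weak solution then so is $u-c$ for any constant $c\in\mathbb R$, since constants solve \eqref{par1'} trivially.

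The heart of the argument will be the oscillation estimate
\[
\omega(z_0,R) \le \theta\,\omega(z_0,2R), \qquad \theta := 1-\tfrac1C \in (0,1),
\]
where $M_r:=\sup_{Q(z_0,r)}u$, $m_r:=\inf_{Q(z_0,r)}u$, and $\omega(z_0,r):=M_r-m_r$. To establish it I will apply the rescaled Harnack inequality on $Q(z_0,2R)$ (so the Harnack radius in Theorem~\ref{nt1} becomes $2R$) to the two nonnegative suitable weak solutions $v_1:=u-m_{2R}$ and $v_2:=M_{2R}-u$. Both applications control the supremum on the common ``past'' sub-cylinder $K:=Q((x_0,t_0-2R^2),R)$ by the infimum on the common ``future'' sub-cylinder $Q(z_0,R)$, giving $\sup_K v_1\le C(m_R-m_{2R})$ and $\sup_K v_2\le C(M_{2R}-M_R)$. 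Adding these and using that $v_1+v_2\equiv \omega(z_0,2R)$ on $Q(z_0,2R)\supset K$ produces
\[
\omega(z_0,2R) = \sup_K(v_1+v_2) \le \sup_K v_1 + \sup_K v_2 \le C\bigl(\omega(z_0,2R)-\omega(z_0,R)\bigr),
\]
which is equivalent to the claimed estimate.

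The final step will be an iteration: since $u$ is ancient, $Q(z_0,2^kR)\subset Q_-$ for every $k\in\mathbb N$, so
\[
\omega(z_0,R) \le \theta^k\,\omega(z_0,2^kR) \le \theta^k(M-m) \longrightarrow 0
\]
as $k\to\infty$. Hence $\omega(z_0,R)=0$ for every $R>0$ and every $z_0\in Q_-$, so $u$ is constant on $Q_-$. The main obstacle is the first step, namely verifying that the Harnack constant is genuinely scale-invariant across all of $Q_-$ and that $u-c$ qualifies as a suitable weak solution in the sense of Definition~\ref{id1} so that Theorem~\ref{nt1} can legitimately be applied to $v_1$ and $v_2$; once these points are in hand, the oscillation decay and the iteration are routine.
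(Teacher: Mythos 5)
Your deduction of Theorem~\ref{it2} from Theorem~\ref{nt1} is correct and follows essentially the same argument as the paper's Lemma~\ref{ll1} together with the proof of Theorem~\ref{it2}: apply the scale- and translation-invariant Harnack inequality to the two nonnegative solutions $u-m_{2R}$ and $M_{2R}-u$ to obtain the oscillation decay $\omega(z_0,R)\le(1-1/C)\,\omega(z_0,2R)$, then iterate using boundedness of $u$. The only cosmetic difference is that the paper packages the iteration as a H\"older-continuity estimate and then sends a rescaling parameter to infinity, whereas you iterate the decay estimate directly upward in scale; this is the same idea, presented marginally more directly.
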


{\it Remark.} Of course, the corresponding elliptic results follow immediately by taking time-independent solutions.  In addition, in Section 3 we provide a second --- short and elementary ---  proof of the Liouville theorem for weak (sub)solutions (see Definition \ref{definition of weak solutions}) to \eqref{ell1} in $\bbR^2$.
\medskip

Recall that the  norm in the space $BMO(\Omega;\mathbb M^{n\times n})$ is
$$\|d\|_{BMO(\Om;\mathbb M^{n\times n})}=\sup\left\{\frac 1{|B(0,r)|}\int\limits_{B(x,r)} \left| d
-[d]_{x,r} \right| dx:\,B(x,r)\Subset\Omega \right\},$$
with $[d]_{x,r}$ the average of $d$ over $B(x,r)$.

We note that the space $BMO$ is invariant under the scaling~(\ref{scaling_c}),
and hence these results are again in line with the argument that to preserve the
``deeper properties" of the solutions, one cannot ``break the scaling".
One of the goals of this paper is to present some evidence for this based
on studying the
failure of Liouville theorems under appropriate conditions.

Let us first look at  \eqref{ell1} in $\bbR^n$.
By the Liouville theorem for~(\ref{ell1}) we mean the usual statement that
a bounded solution in $\bbR^n$ has to be constant. This is of course
true for $b\equiv 0$.  For the time being let us assume that the vector
field $b$ is locally smooth, hence the solutions $u$ are also locally
smooth and the only obstacles to the validity of the Liouville theorem
are global.

The results of Stampacchia \cite{Sta} imply the following:

\medskip
\noindent
(L) {\sl If $b\in L_n(\bbR^n)$, then the Liouville theorem for~(\ref{ell1})
holds.}
\medskip

\noindent
This is easy for $n=1$, and for $n=2$, there is also a relatively simple proof
based on the energy estimate. The proof for $n\ge 3$ can be accomplished
by using the H\"older estimate or the Harnack inequality (see Sections 7 and 8 of
\cite{Sta}). If $n\ge 3$, then by Theorem 2.3 in \cite{Nazarov-Uraltseva},
 the condition on $b$ can be weakened
to $\lim \inf_{R\to\infty}\sup_{|x|=R} ||b||_{L_n(B(x,R\delta))}<c_n$ for some $\delta>0$,
where $c_n>0$ is a fixed dimension-dependent constant.
 This result implies in particular that (L) remains true when
 \beq\lbl{b_decay}
 |b(x)|\le {\frac C {|x|}}\quad \quad \text{for large $|x|$.}
 \eeq
 It is not clear to us whether condition~(\ref{b_decay}) is sufficient
 in dimension $n=2$, but this should not be a hard question.

 In dimension $n=1$,  condition~(\ref{b_decay}) is sufficient
 when $C\le 1$, as one can check by direct integration. With $C>1$, however, \eqref{b_decay}
is no longer sufficient. This can be seen from
 the  example
 \beq\lbl{example}
 b(x)  = {\frac {2x}{1+x^2}} \qquad \text{and} \qquad  u(x)  =\arctan(x)\,,
 \eeq
 which was  pointed out in this context to one
 of the authors in 1997 by Joel Spruck.
 The trivial extension of this example to higher dimensions is
 \beq\lbl{example_n}
 b(x_1,\dots,x_n)  = \left({\frac {2x_1}{1+x_1^2}},0,\dots,0\right) \qquad \text{and} \qquad  u(x_1,\dots,x_n)  =\arctan(x_1)\, .
 \eeq
 We note that the vector field $b$ in~(\ref{example_n}) belongs
 to the space $(BMO)^{-1}(\bbR^n)$, since
 \beq
 {\frac {2x}{1+x^2}}={\frac d {dx}}\log(1+x^2)
 \eeq
 and  $\log(1+x_1^2)\in BMO(\bbR^n)$.

This example and Theorem \ref{it2}, which establishes the Liouville theorem for $b\in (BMO)^{-1}$ and $\dv b=0$,
 together show that the divergence-free
 condition can play an important role in Liouville
 theorems for equations with drift terms.
 On the other hand, we now provide a counter-example
to the Liouville theorem with a divergence-free $b$ on $\bbR^2$ which is in some sense
 not too far from $(BMO)^{-1}$. Recall that the  {\it stream function} of a divergence-free vector field $b$ on $\bbR^2$ is $H:\bbR^2\to\bbR$ such that
\beq \lbl{stream2}
b(x)=\nabla^\perp H(x) = (H_{x_2}(x), -H_{x_1}(x)).
\eeq
 We therefore have
\beq\lbl{stream3}
-\Delta  + b\cdot\nabla  = -{\rm div} ( A\, \nabla),
\eeq
where $A(x)=\mathbb I+d(x)$ has skew-symmetric part
 $$d(x)=\left( \begin{matrix} 0 & H(x) \cr -H(x) & 0 \cr \end{matrix} \right) .
$$

\begin{theorem} \lbl{T.1.2}
There exists a divergence-free vector field $b\in C^\infty(\bbR^2)$ with all derivatives bounded and a stream function satisfying $|H(x)|\le C\ln|x|\ln\ln|x|$ for some $C$ and all large enough $|x|$ such that \eqref{ell1} has a non-constant  bounded classical solution.
\end{theorem}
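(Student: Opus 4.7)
I would first pass to logarithmic polar coordinates: set $s=\log|x|$, $\theta=\arg x$, and $U(s,\theta):=u(e^s\cos\theta,e^s\sin\theta)$, $\tilde H(s,\theta):=H(e^s\cos\theta,e^s\sin\theta)$. Using $b=\nabla^\perp H$ and the conformal invariance of the 2D Laplacian, a short computation reduces \eqref{ell1} to
\[U_{ss}+U_{\theta\theta}=\tilde H_\theta\,U_s-\tilde H_s\,U_\theta\]
on the cylinder $\bbR\times(\bbR/2\pi\bbZ)$. The growth bound $|H(x)|\le C\log|x|\log\log|x|$ becomes $|\tilde H(s,\theta)|\le Cs\log s$ for $s$ large, and smoothness with bounded derivatives of $b$ corresponds to smoothness of $\tilde H$. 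The classical-solution condition at the origin is enforced by taking $\tilde H\equiv 0$ and $U\equiv 0$ for $s\le s_0$, so that $b\equiv 0$ on $|x|\le e^{s_0}$.

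\textbf{Shell-by-shell scheme.} Next, fix a summable positive sequence $\{\epsilon_k\}_{k\ge 0}$ with $\sum_k\epsilon_k>0$ and a rapidly increasing sequence $s_0<s_1<s_2<\cdots\to\infty$, and construct $(U,\tilde H)$ on each shell $[s_k,s_{k+1}]\times S^1$ so that $U\equiv c_k:=\sum_{j<k}\epsilon_j$, $\tilde H\equiv 0$ near $s=s_k$ and $U\equiv c_{k+1}$, $\tilde H\equiv 0$ near $s=s_{k+1}$. Gluing the shells across these drift-free plateaux yields smooth $(U,\tilde H)$ on $\bbR\times S^1$; the maximum principle (which applies since $\nabla^\perp\tilde H$ is divergence-free) confines $U$ to the interval $[c_0,c_\infty]$ with $c_\infty:=\sum_k\epsilon_k\ne c_0$, giving the required bounded non-constant $u$ on $\bbR^2$.

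\textbf{Single-shell lemma.} The analytic heart is a single-shell construction: on a strip $[0,L]\times S^1$, produce smooth $(U,\tilde H)$ solving $U_{ss}+U_{\theta\theta}=\tilde H_\theta U_s-\tilde H_s U_\theta$ with $U\equiv 0$, $\tilde H\equiv 0$ near $s=0$; $U\equiv\epsilon$, $\tilde H\equiv 0$ near $s=L$; and $\|\tilde H\|_{L^\infty}\le M$, for a quantitative relation among $L,\epsilon,M$. A $\theta$-independent $U$ is immediately ruled out, since $\tilde H\equiv 0$ near both ends would force $U$ to be harmonic, hence affine in $s$ there, contradicting the prescribed constant plateaux unless $\epsilon=0$; so nontrivial $\theta$-dependence is mandatory. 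A natural attack uses a finite Fourier ansatz $U=\alpha(s)+\sum_j\beta_j(s)\cos(j\theta)$, $\tilde H=\sum_j p_j(s)\sin(j\theta)$, which converts the PDE into a coupled ODE system whose zero-mode equation takes the schematic form $\alpha''=\tfrac{d}{ds}(\text{bilinear in }\beta_j\text{ and }p_j)$. This is precisely the mechanism by which the drift pumps a net shift of the axisymmetric part $\alpha$ through the prescribed amount $\epsilon$. Since the Poisson bracket of a one-mode $\tilde H$ and a one-mode $U$ generates second-harmonic residues, at least two Fourier modes (or a more carefully chosen spectral family) must be used for consistency.

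\textbf{Main obstacle.} The delicate point is matching the borderline logarithmic rate. Theorem~\ref{it2}, applied to time-independent solutions, shows that a bounded $H$ cannot support a non-constant bounded $u$, so $\|\tilde H\|_{L^\infty}$ must in fact grow in $s$, and the question is how slowly. Crude estimates on the single-shell lemma would typically produce growth rates such as $s^{1+\delta}$ or $s\log^{1+\delta}s$; obtaining the sharp $Cs\log s$ requires a joint tuning of the jumps $\epsilon_k$, the shell lengths $L_k=s_{k+1}-s_k$, and the Fourier truncations $N_k$ used in each shell. A secondary issue is to check that the gluing across shell boundaries preserves uniform bounds on all derivatives of $b$ on $\bbR^2$; this reduces to smoothness of the coefficient functions $\alpha,\beta_j,p_j$ together with the favourable decay factor $e^{-s}$ arising when $\nabla$ and $\nabla^\perp$ are translated from log-polar back to Cartesian coordinates.
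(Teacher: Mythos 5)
The crux of your scheme is the single-shell lemma, and that lemma is not proven: you state it as ``a quantitative relation among $L,\epsilon,M$'' and sketch a Fourier ansatz, but you never produce the pair $(U,\tilde H)$ on $[0,L]\times S^1$ nor establish any bound on $\|\tilde H\|_{L^\infty}$ in terms of $\epsilon$ and $L$. That is the entire content of the theorem: the log-polar reduction (which you compute correctly), the gluing across drift-free plateaux, the maximum principle, and the favourable $e^{-s}$ factors when passing back to Cartesian derivatives are all routine, whereas achieving the $s\log s$ growth rate is the whole point --- Theorem \ref{it2} already rules out bounded $\tilde H$, so the growth must be unbounded yet tuned to the borderline rate. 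You acknowledge that crude estimates would overshoot to $s^{1+\delta}$ or $s\log^{1+\delta}s$ and offer only the phrase ``joint tuning'' of $\epsilon_k$, $L_k$, $N_k$ to close the gap; as written the argument is a plan, not a proof. There is also a subtler elision: the PDE is a constraint, not a freedom, so one cannot prescribe $U$ and $\tilde H$ simultaneously on a shell --- one must fix $\tilde H$ and solve the resulting linear boundary-value problem for $U$ (tracking how the prescribed endpoint jump interacts with $\|\tilde H\|_{L^\infty}$), or set up a fixed-point scheme, and neither is indicated.

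For comparison, the paper's proof is entirely different in method and sidesteps the quantitative difficulty by being explicit and probabilistic. It takes the stream function $\til H(x)=C\sgn(x_1x_2)h(\hat x)$ with $\hat x=\min\{|x_1|,|x_2|\}$ and $h(s)=\ln s\ln\ln s$ for large $s$, so that after mollification the drift $b=\nabla^\perp H$ points away from the $x_1$-axis with magnitude $\gtrsim C\,h'(|x_1|)$ inside the cones $K^\pm=\{\pm x_2\ge |x_1|+2\}$. Representing solutions of the associated parabolic equation via the SDE $dX_t=b(X_t)\,dt+\sqrt2\,dB_t$ and invoking the law of the iterated logarithm (so $|B_t|<(3t\ln\ln t)^{1/2}$ eventually, with positive probability), the drift overtakes the diffusion inside $K^+$ when $C$ is large, giving a positive probability of never crossing the axis starting from $(0,4)$; hence $u(0,4)>1>u(0,-4)$. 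The $\ln|x|\ln\ln|x|$ rate falls directly out of the LIL. Your route, if completed, would produce a rotationally-layered annular drift rather than a cone-supported one and would replace the LIL by a quantitative per-shell energy/ODE estimate --- possible in principle, but the hard analysis is entirely missing from the proposal.
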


 This  illustrates, to some
 degree, the important role of scale invariance of the assumptions
 in the Liouville result. In particular, it seems unlikely that one
 can significantly ``break the scaling" even if we assume that $\dv b = 0$.

 We conjecture that similar negative conclusions can be arrived at when
 considering questions about H\"older continuity of  solutions
 of (\ref{ell1}) (as well as the Harnack inequality). For example, it seems unlikely that
 the condition $\dv b = 0$
 is sufficient to get a $C^\alpha$-bound on  solutions
 $u$ in the unit ball $B=B(0,1)$
 under the assumptions $|u|\le C$ and $||b||_{L_{n-\delta}}\le C$.
 (Here we assume that all the functions involved are smooth, but only
 the indicated quantities are controlled, and we are interested
 in an a-priori bound.)

Related to this are our last two main results, concerning distributional solutions $u$ (see Proposition \ref{p:h1-from-linfty}) of \eqref{ell1} in $B$ with divergence-free $b\in L_1(B)$. The first establishes a logarithmic modulus of continuity of such solutions in two dimensions, depending only on $\|b\|_{L_1(B)}$ and $\|u\|_{L_\infty(B)}$. However, due to the low regularity assumed on the vector field $b$ and $u$ solving \eqref{ell1} only in the distributional sense,  our result is restricted to those solutions which can be obtained as weak-star $L_\infty$-limits of solutions with drifts in $L_2(B)$.

\begin{theorem}\label{limit}
Let $B$ be the unit ball in $\bbR^2$ and let $(b_m,u_m)\in L_2(B)\times L_\infty(B)$ be a sequence of divergence-free drifts $b_m$ and distributional solutions $u_m$ to (\ref{ell1}) with $b=b_m$. Assume that $u_m$ are uniformly bounded in $B$ and
$$b_m \to b\qquad {\rm in } \quad L_1(B),$$
$$u_m\, { \stackrel{\star}{\rightharpoonup}}\, u\qquad {\rm in } \quad L_\infty(B).$$
Then the function $u$ is a distributional solution to (\ref{ell1}). Moreover,
$$u\in H^1_{\rm loc}(B)\cap C_{\rm loc}(B)$$
and at the origin $u$ has the modulus of continuity
\beq\lbl{modcont}
 \sup_{x \in B(0,r)} |u(x)-u(0)| \leq \frac {C\left(1+||b||_{L_1(B)}\right)^{1/2}} {\sqrt{-\log r}} ||u||_{L_\infty(B)}
 \eeq
with a universal $C>0$.
\end{theorem}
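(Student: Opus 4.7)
The theorem asserts three properties: that $u$ is a distributional solution of \eqref{ell1}, that $u\in H^1_{\rm loc}(B)$, and that $u$ is continuous on $B$ with the stated logarithmic modulus at the origin. I plan to address these in order.

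For the first two assertions, the plan is to obtain a uniform $H^1_{\rm loc}$ bound on $\{u_m\}$ via the standard divergence-free Caccioppoli estimate. Testing the equation for $u_m$ against $u_m\varphi^2$ with a smooth cutoff $\varphi$ compactly supported in $B$, and integrating by parts in the drift term using $\dv b_m=0$, gives
\[
\int|\nabla u_m|^2\varphi^2\,dx\leq C\|u_m\|_{L_\infty(B)}^2\left(\int|\nabla\varphi|^2\,dx+\int\varphi|b_m||\nabla\varphi|\,dx\right),
\]
which is uniform in $m$. By Rellich, along a subsequence $u_m\to u$ strongly in $L_2^{\rm loc}(B)$ and a.e.; by interpolation with the $L_\infty$ bound the convergence is also in every $L_p^{\rm loc}$ with $p<\infty$. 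The distributional formulation reads $\int u_m(-\Delta\psi)\,dx-\int u_mb_m\cdot\nabla\psi\,dx=0$ for $\psi\in C^\infty_c(B)$; the first integral converges by weak-$*$ $L_\infty$ convergence, and for the second I decompose $u_mb_m-ub=(u_m-u)b+u_m(b_m-b)$, the first summand vanishing in $L_1^{\rm loc}$ by dominated convergence and the second bounded by $\|u_m\|_{L_\infty}\|b_m-b\|_{L_1}\to 0$. Weak lower semicontinuity of the $H^1$-norm then gives $u\in H^1_{\rm loc}(B)$.

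The crux is the modulus of continuity. My plan is to prove the energy decay
\[
\int_{B(0,r)}|\nabla u|^2\,dx\leq\frac{C(1+\|b\|_{L_1(B)})\|u\|_{L_\infty(B)}^2}{|\log r|}
\]
and convert it into the pointwise estimate \eqref{modcont}. The natural choice is to repeat the Caccioppoli with a logarithmic cutoff $\varphi_r(x)=\min(1,\log(R/|x|)/\log(R/r))\,\chi_{B(0,R)}$ (for fixed $R\in(0,1)$), which satisfies $\int|\nabla\varphi_r|^2\sim 1/|\log r|$. The drift term $\int\varphi_r|b||\nabla\varphi_r|$ does not close for $b$ only in $L_1$ because $\|\nabla\varphi_r\|_\infty$ blows up at the origin. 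To handle this, I would use the two-dimensional divergence-free structure: write $b=\nabla^\perp H$ with $\int_B H\,dx=0$, so that by the Sobolev embedding $W^{1,1}(B)\hookrightarrow L_2(B)$ one has $\|H\|_{L_2(B)}\leq C\|b\|_{L_1(B)}$. The equation rewrites as $-\dv((\mathbb I+d)\nabla u)=0$ with $d$ the skew matrix having entries $\pm H$, and the Caccioppoli test against $v\varphi_r^2$ (with $v=u-c$ for a suitable constant $c$, e.g.\ the average of $u$ on $B(0,R)$) exploits the pointwise orthogonality $\nabla\varphi_r\cdot\nabla^\perp\varphi_r=0$ to kill one cross term, yielding
\[
\int|\nabla u|^2\varphi_r^2\,dx\leq C\|u\|_{L_\infty(B)}^2\int(1+H^2)|\nabla\varphi_r|^2\,dx.
\]
The estimate of $\int H^2|\nabla\varphi_r|^2$ is then carried out by a polar-coordinate decomposition, balancing $\|H\|_{L_2(B)}^2=\int_0^R s\int_0^{2\pi}H^2(s,\theta)\,d\theta\,ds$ against the angular $L_2$ profile of $H$ via a weighted Hardy-type inequality. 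Finally, Poincar\'e's inequality on $B(0,r)$ combined with a dyadic comparison of ball averages converts the energy decay into \eqref{modcont}, the $L_\infty$ bound on $u$ controlling the otherwise divergent telescoping sum.

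The principal obstacle is the bound on $\int H^2|\nabla\varphi_r|^2$: the naive Cauchy--Schwarz $\|H\|_{L_2}^2\|\nabla\varphi_r\|_\infty^2$ is useless since $\|\nabla\varphi_r\|_\infty$ diverges, so one must exploit the interplay between the radial structure of $\varphi_r$ and the angular $L_2$ profile of $H$ carefully. A secondary difficulty is the dyadic-summation step, where the naive telescoping of ball averages gives divergent sums and must be closed using the global $L_\infty$ bound on $u$.
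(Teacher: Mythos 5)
Your treatment of the first two assertions (that $u$ solves \eqref{ell1} distributionally and lies in $H^1_{\rm loc}$) is correct and essentially coincides with the paper's Proposition \ref{p:h1-from-linfty}: the divergence-free Caccioppoli estimate yields a uniform local $H^1$ bound in terms of $\|b_m\|_{L_1}$ and $\|u_m\|_{L_\infty}$, and passing to the limit in the weak formulation via $u_m b_m-ub=(u_m-u)b+u_m(b_m-b)$ is fine.

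The modulus of continuity is where your route diverges from the paper's and where there is a genuine gap. You aim to prove the energy decay $\int_{B(0,r)}|\nabla u|^2\,dx\le C/|\log r|$ and then ``convert'' it to \eqref{modcont} by Poincar\'e plus a dyadic comparison of ball averages. This conversion cannot work, even in principle: the bound $\int_{B(0,r)}|\nabla u|^2\le C/|\log r|$ is too weak to imply continuity. For instance $u(x)=\sin\bigl(\log\log(1/|x|)\bigr)$ satisfies it, since $|\nabla u|\le \bigl(|x|\log(1/|x|)\bigr)^{-1}$, yet is bounded and discontinuous at the origin. As you yourself observe, the dyadic telescoping produces the divergent series $\sum_k k^{-1/2}$, and no $L_\infty$ bound can close it, because no pointwise structural information about $u$ has entered. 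The missing ingredient --- and the heart of the paper's Theorem \ref{t:2d-continuity} --- is the \emph{maximum principle}. Because each $u_m$ (with $b_m\in L_2$, hence $u_m\in H^1_{\rm loc}$) is a weak solution of the divergence-form equation $-\dv\bigl((\mathbb I+d_m)\nabla u_m\bigr)=0$, the oscillation on spheres $\osc_{\partial B(s)}u_m$ is monotone in $s$ and equals the oscillation on the ball. The paper then estimates in polar coordinates
\[
\int\limits_{B\setminus B(r)}|\nabla u_m|^2\,dx\ \ge\ \int\limits_r^1\frac1s\int\limits_{\partial B}|\partial_\theta u_m(s\theta)|^2\,d\theta\,ds\ \ge\ c\int\limits_r^1\frac1s\bigl(\osc_{\partial B(s)}u_m\bigr)^2\,ds\ \ge\ c\,|\log r|\,\bigl(\osc_{\partial B(r)}u_m\bigr)^2,
\]
using the one-dimensional Sobolev embedding on circles and the monotonicity to pull out the last factor. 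Note the drift plays no role at all in this step: the modulus of continuity is a general property of $H^1$ functions in two dimensions that satisfy the maximum principle on concentric balls, so no energy decay on small balls is needed and the $1/\sqrt{|\log r|}$ rate follows directly from the fixed-scale Caccioppoli bound of Proposition \ref{p:h1-from-linfty}. Your scheme additionally tries to involve the drift via the stream function and a weighted Hardy inequality; besides being unnecessary, that step encounters the problem you flag ($\int H^2|\nabla\varphi_r|^2$ is not controlled by $\|H\|_{L_2}$ without extra cancellation at the origin). In short: replace the energy-decay plan by the maximum-principle argument; without the maximum principle the logarithmic energy bound does not yield continuity.
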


In three (and more) dimensions this result is false. Indeed, there exists no modulus of continuity of classical solutions depending only on $\|b\|_{L_1(B)}$ and $\|u\|_{L_\infty(B)}$, and distributional solutions  $u\in L_\infty(B)\cap H^1(B)$ with divergence-free $b \in L_1(B)$ may be discontinuous.

\begin{theorem} \lbl{T.3.4} Let $B$ be the unit ball in $\bbR^3$.

(i) There is $c>0$ such that for each $\eps>0$ there is a smooth divergence-free drift $b$ with $||b||_{L_1(B)}\le c$ and a smooth $u$ with $||u||_{L_\infty(B)}\le 1$, solving \eqref{ell1} in $B$ and satisfying
\[
u(0,0,\eps)-u(0,0,0)\ge c^{-1}.
\]

(ii) There is a divergence-free drift $b\in L_1(B)$   and a distributional solution $u\in H^1(B)\cap L_\infty(B)$ of \eqref{ell1} in $B$ which can be approximated by a smooth sequence $(b_m,u_m)$ in the sense of Theorem
\ref{limit}, but $u$ is discontinuous at the origin. 
\end{theorem}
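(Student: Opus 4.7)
My plan is to establish (i) by a scaling argument that reduces the problem to the construction of a single ``base example,'' and then to iterate this base example to obtain (ii). The crucial dimensional observation is that in $\bbR^3$ the $L_1$ norm of the drift transforms as $\|b\|_{L_1}\mapsto \lambda^{-2}\|b\|_{L_1}$ under the critical elliptic rescaling $b(x)\mapsto \lambda b(\lambda x)$, so pushing a fixed base example down to a small spatial scale makes its $L_1$ norm \emph{smaller}. This is a regime unavailable in two dimensions and is responsible for the sharp contrast with Theorem \ref{limit}.

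For (i) I would first produce one compactly supported smooth divergence-free $b_0$ on $\bbR^3$ and a smooth bounded classical solution $u_0$ of $-\Delta u_0 + b_0\cdot\nabla u_0=0$ satisfying $u_0(0,0,0)=0$, $u_0(0,0,1)\ge 1$, and $M_0:=\|u_0\|_{L_\infty}<\infty$. Given such a base, the rescaled pair $b(x):=\eps^{-1}b_0(\eps^{-1}x)$, $u(x):=u_0(\eps^{-1}x)/M_0$ automatically solves \eqref{ell1}, has $\|b\|_{L_1(B)}=\eps^2\|b_0\|_{L_1(\bbR^3)}$ and $\|u\|_{L_\infty(B)}\le 1$, and yields $u(0,0,\eps)-u(0,0,0)\ge 1/M_0$, so (i) holds with $c:=\max(\|b_0\|_{L_1(\bbR^3)},M_0)$. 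The heart of the matter is the base example itself: a natural candidate is an axisymmetric ``vortex pump'' $b_0=\nabla\times(A_\theta(r,z)e_\theta)$ whose Stokes stream function $\psi=rA_\theta$ is concentrated in a thin torus around a circle $\{r=r_0,\,z=1/2\}$ and is normalized to have large circulation. One solves the Dirichlet problem on a large ball with boundary data $0$ on the lower hemisphere and $1$ on the upper; the $L_\infty$ bound follows from the maximum principle, and a test-function computation using $\dv b_0=0$ (pairing the equation against a cut-off along the $z$-axis from $z=0$ to $z=1$) should show that a strong enough pump forces the value of $u_0$ along the axis to differ substantially between the two endpoints.

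For (ii), I iterate. With $\eps_k:=2^{-k}$, apply (i) to produce pairs $(b^{(k)},u^{(k)})$ with disjoint supports shrinking toward the origin (by translating each along the $z$-axis a distance comparable to $\eps_k$), so that $b^{(k)}$ is supported in a spherical shell $\{|x|\sim 2^{-k}\}$ and satisfies $\|b^{(k)}\|_{L_1(B)}\le c\cdot 4^{-k}$. Set $b:=\sum_{k} b^{(k)}\in L_1(B)$ and $b_m:=\sum_{k\le m} b^{(k)}$, and let $u_m$ be the smooth solution of \eqref{ell1} with drift $b_m$ and fixed smooth Dirichlet data on $\partial B$. These are uniformly bounded in $L_\infty(B)\cap H^1(B)$ by the maximum principle and the energy identity (\ref{energy}). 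Extract a weak-$*$ $L_\infty$ limit $u$, which is the desired distributional solution via the framework of Theorem \ref{limit}. The local oscillation contributed by the $k$-th block, being supported on a disjoint shell, survives passage to the limit thanks to interior regularity of $u_m$ away from $\supp b_m$; hence $u$ oscillates by at least $c^{-1}$ in every neighborhood of the origin and is discontinuous there.

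The principal obstacle is the quantitative lower bound on the jump in the base example: showing that the vortex pump produces an oscillation of bounded-below size, uniformly in the geometric parameters. One route is a compactness argument: if the jump were always small, one could pass to a weak limit of bounded solutions driven by stronger and stronger pumps (using Nash's bound (\ref{nash}) to keep everything in $L_\infty$) and derive a contradiction against the transport structure of the limiting singular problem. An alternative route is to analyze the idealized problem with a delta-function vortex ring directly and then approximate by smooth fields. Either approach rests on the key three-dimensional feature that divergence-free drifts concentrated on lower-dimensional sets can simultaneously have small $L_1$ norm and arbitrarily strong transport effect, a regime ruled out in two dimensions by Theorem \ref{limit}.
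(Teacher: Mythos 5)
Your scaling heuristic is sound and matches a key observation in the paper: in $\bbR^3$ the $L_1$ norm of $b$ scales as $\lambda$ under the elliptic rescaling, so small scales cost less $L_1$ mass, which is exactly the 3D phenomenon exploited here. Your reduction of (i) to a compactly supported ``base example'' via rescaling is also correct, \emph{provided} that base example exists. But there are two genuine gaps.

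First, for part (i) you never actually construct the base example; the ``vortex pump'' and the compactness-contradiction alternative are both left as heuristics (``should show''), and the entire theorem hinges on this step. The paper's proof does not use a localized vortex ring: it constructs a single \emph{self-similar, unbounded} axisymmetric drift $b_0$ (blowing up like $|x|^{-2}$, which is $L_1$ in $\bbR^3$) whose restriction to the cone $(x_1^2+x_2^2)^{\alpha/2}\le |x_3|$ is an outward radial push $-b_0(x)=Cx/|x_3|^3$, $\alpha\in(\tfrac23,1)$. The quantitative lower bound on the jump is then obtained not by an energy or compactness argument but probabilistically: the solution is represented as $u_\eps(x)=\sgn(x_3)\,\bbP\bigl(\,(X^{x,\eps}_t)_3\neq 0\text{ up to exit time}\,\bigr)$, and the law of iterated logarithm plus the strong Markov property show that with probability bounded below uniformly over the starting scale, the diffusion starting at $(0,0,z)$ with $z>0$ is swept up the cone from scale $z$ to scale $2z$, so iterating gives $u_\eps(0,0,z)\ge m>0$ uniformly as $z\downarrow 0$. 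You should not expect to prove this estimate by deterministic PDE compactness; the proof is genuinely a drift-dominates-diffusion stochastic computation.

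Second, for part (ii), your idea of stacking rescaled translates of a base example on disjoint shells $\{|x|\sim 2^{-k}\}$ does not obviously give a discontinuous limit. The oscillation of the global solution $u_m$ in the $k$-th shell is not the intrinsic oscillation of the translated base solution; it is governed by the boundary data $u_m$ inherits at the shell's boundary from all outer shells, and you have no argument that this oscillation stays bounded below as $k\to\infty$. The linear PDE does not ``superpose'' the transport effects of disjoint blocks additively. In the paper's construction this difficulty simply does not arise: there is a single self-similar drift, the same smooth approximations $b_\eps$ and solutions $u_\eps$ serve for both (i) and (ii), the passage $\eps\to 0$ gives the distributional solution directly, and the antisymmetry $u_\eps(Rx)=-u_\eps(x)$ plus the uniform probabilistic lower bound gives $\lim_{z\downarrow 0}u_0(0,0,\pm z)=\pm m\neq 0$. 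To salvage your iterative approach you would need a propagation-of-oscillation lemma through nested harmonic annuli, which is a nontrivial additional step that you have not supplied.
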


Our paper is organized as follows. In the next section, we develop  local regularity theory
for parabolic operators (\ref{par1'}) under the assumption that the skew-symmetric part of $A$ is in BMO, and prove Theorems \ref{it2} and \ref{nt1}. The important step of our approach is a higher integrability of suitable weak solutions. This allows us to adopt Moser's method for proving the Harnack inequality that implies H\"older continuity
of suitable weak solutions and Liouville type theorems for ancient suitable weak solutions. All these results hold true for the heat equation with a drift $b\in L_\infty(BMO^{-1})$ as a particular case. In this connection, we would like to mention the recent paper \cite{FrVi}, of which we learned while writing the present manuscript. In  \cite{FrVi}, among other questions, the Cauchy problem for the heat operator with the drift term from  $L_\infty(BMO^{-1})$ has been considered and H\"older continuity of solutions has been proved. The authors of \cite{FrVi} follow the Caffarelli-Vaseur approach \cite{CaVa}. In Section 3, an elementary proof of a Liouville theorem in the two-dimensional elliptic case is provided and Theorem \ref{T.1.2} is proved. Theorems \ref{limit} and \ref{T.3.4} are proved in Section 4.

\medskip
{\bf Acknowledgement.} GS was partially supported by the RFFI grant
08-01-00372-a. The other authors were supported in part by NSF grants DMS-1001629 (LS),  DMS 0800908 (VS), and DMS-0901363 (AZ).  LS and AZ  also acknowledge partial support by Alfred P.~Sloan Research Fellowships.

\section{Some results for parabolic equations}
\label{s:parabolic}

The main goal of this section  is to prove Theorems \ref{it2} and \ref{nt1}.
We consider \eqref{\ione} in $Q_-=\mathbb R^n\times \mathbb R_-$, with the matrix $A$ satisfying \eqref{i2}--\eqref{i4}.
We will study the so-called suitable weak solutions to (\ref{\ione}).  In what follows we will use the  abbreviated notation
$$B(r)=B(0,r),\quad B=B(1), \quad Q(r)=Q(0,r),\quad Q=Q(1),$$
as well as $z=(x,t)$.

\begin{definition}\la{id1} Function $u$ is said to be a {\it suitable weak solution} to  equation (\ref{\ione}) in the parabolic ball $Q(R)$
if it satisfies
\be\la{i5}u\in L_{2,\infty}(Q(R))\cap W^{1,0}_2(Q(R)), \ee
\be\la{i6}\int\limits_{Q(R)}u\,\pa_t\varphi dz=\int\limits_{Q(R)} (A \na u)\cdot \na\varphi dz\qquad \forall\varphi \in C^\infty_0(Q(R)),\ee
and for a.e.~$t_0\in ]-R^2,0[$, the local energy inequality
$$\frac 12 \int\limits_{B(R)}\varphi(x,t_0)|u(x,t_0)|^2dx+\int\limits^{t_0}_{-R^2}
\int\limits_{B(R)}\varphi \na u\cdot a\na u dz\leq $$
\be\la{i7}\leq\frac 12\int\limits^{t_0}_{-R^2}\int\limits_{B(R)}|u|^2\pa_t\varphi dz-\int\limits^{t_0}_{-R^2}\int\limits_{B(R)}(A\na u)\cdot \na\varphi u dz\ee
holds for all non-negative test-functions $\varphi \in C^\infty_0(B(R)\times ]-R^2,R^2[)$.

The function $u:Q_-\to \mathbb R$ is called an {\it ancient suitable weak solution} to (\ref{\ione}), if it is a suitable weak solution to (\ref{\ione}) in $Q(R)$ for any $R>0$.
\end{definition}

It is not clear whether one can show that any solution to (\ref{\ione}), subject
to assumptions (\ref{i5}) and (\ref{i6}), satisfies  local energy inequality (\ref{i7}). In this respect the
situation is similar to the Navier-Stokes equations: there is a certain cancelation due to the skew symmetric matrix $d$ which works well in global setting, i.e., when initial-boundary value problems are under consideration. The corresponding procedure is relatively routine and  leads to the existence of global solutions which satisfy the inequalities in Definition \ref{id1} at least locally.




We now outline the main points of our approach.
The structure of  equation (\ref{\ione}) admits a modification of the technique developed by
J. Moser in \ci{M1}--\ci{M2} and  get H\"older continuity of suitable weak solutions.
This property, together with  scaling invariance, leads to the Liouville theorem.
The main tool of proving H\"older continuity is the Harnack inequality. We prove the
Harnack inequality for smooth solution by the method of J. Moser. Extension of the
Harnack inequality to suitable weak solutions is provided by higher integrability
of the the spatial gradient. Here, our arguments use an approach due to
M. Gianquinta and M. Struwe, see \ci{GS}.

\subsection{Local set-up and higher integrability }

Equation (\ref{\ione}) is invariant with respect to
translations and the following scaling
\be\la{lh1}u^\lambda(x,t)=u(\lambda x,\lambda^2t),\qquad A^\lambda(x,t)=A(\lambda x,\lambda^2t)\ee
for any positive $\lambda$. This allows us to reduce all considerations
to some canonical domain, say, to $Q=Q(1)$.

So, we consider equation (\ref{\ione})
in the unit parabolic cylinder.
Matrix $A$ is
split
into two parts as in (\ref{i2})
with matrices $a\in L_\infty(Q;\mathbb M^{n\times n})$ and
$d \in L_\infty(-1,0;BMO(B;\mathbb M^{n\times n}))$ satisfying  conditions (\ref{i3})
and (\ref{i4})



In what follows, we shall denote by $c$ positive constants depending only on $n$ and
$\nu$. We let $\|d\|_{L_\infty(BMO)} =\|d\|_{L_\infty(-1,0;BMO(B))}$
and denote mean values by
$$[f]_{x,r}=\frac 1{|B(r)|}\int\limits_{B(x,r)}f(y)dy,
\qquad (u)_{z_0,r}=\frac 1{|Q(r)|}\int\limits_{Q(z_0,r)}u(z)dz.$$

The main result of this subsection is the following theorem.
\begin{theorem}\la{t21} Assume that $u$ is a suitable weak solution to (\ref{\ione})
in $Q$ and matrices $A$, $a$, and $b$ satisfy conditions
(\ref{i2})-(\ref{i4}). Then there exist two positive constants $p>2$
and $C$  depending only on $n$, $\nu$, and $\|d\|_{L_\infty(BMO)}$
such that $u\in L_p(Q(R))$ for any $R\in ]0,1[$. Moreover, the
following estimate is valid: \be\la{21}\Big(\frac
1{|Q(R)|}\int\limits_{Q(z_0,R)}|\na {u}|^pdz\Big)^\frac 1p \leq
C\Big(\frac 1{|Q(6R)|}\int\limits_{Q(z_0,6R)}| \na
{u}|^2dz\Big)^\frac 12\ee for all $Q(z_0,6R)\subset Q$ with
$6R<\dist{(x_0,\pa B)}$ and $t_0-(6R)^2>-1$.\end{theorem}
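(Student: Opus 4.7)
The plan is to follow the Giaquinta-Struwe self-improving scheme \ci{GS}. Three ingredients are needed: a Caccioppoli-type estimate for $u-c$ on parabolic subcylinders (for arbitrary constant $c$), the parabolic Poincar\'e-Sobolev inequality, and the parabolic Gehring lemma. Only the first step is genuinely new in our setting, because $d$ is merely in $BMO$ rather than $L_\infty$; once a Caccioppoli inequality is in hand the remaining steps are classical.

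For the Caccioppoli inequality one uses that constants solve \eqref{\ione}, so that (\ref{i7}) applies to $u-c$ for any $c\in\mathbb R$. Choose a standard parabolic cutoff $\eta$ equal to $1$ on $Q(z_0,r/2)$ and supported in $Q(z_0,r)$, and take $\varphi=\eta^2$. The contribution of the symmetric part $a$ is handled classically via (\ref{i3}) and Young's inequality. The delicate term is
\[
J:=-2\int_{Q(z_0,r)}(d\na u)\cdot\eta\na\eta\,(u-c)\,dz.
\]
Decompose $d=[d]_{x_0,r}+\widetilde d$ with $\widetilde d:=d-[d]_{x_0,r}$ (the mean is taken in $x$ at each fixed $t$, hence $[d]_{x_0,r}$ is $x$-constant). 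The contribution of $[d]_{x_0,r}$ to $J$ vanishes identically: after one spatial integration by parts, the two arising scalars $[d]_{ij}\eta_{,ij}\,\eta\,(u-c)^2$ and $[d]_{ij}\eta_{,i}\eta_{,j}\,(u-c)^2$ are zero (symmetric tensors paired with the antisymmetric $[d]_{x_0,r}$), while the remaining term equals the negative of the original. For $\widetilde d$ the John-Nirenberg inequality yields $\widetilde d\in L_q(B(x_0,r))$ for every $q<\infty$ with norm bounded in terms of $\|d\|_{L_\infty(BMO)}$ alone. Young's inequality absorbs an $\varepsilon$-fraction of $\int\eta^2|\na u|^2$ into the left-hand side, and H\"older's inequality with an exponent $\sigma>1$ close to $1$ controls the rest, producing
\[
\sup_{t_0-r^2/4<t<t_0}\int_{B(x_0,r/2)}(u-c)^2\,dx+\int_{Q(z_0,r/2)}|\na u|^2\,dz\le \frac{C}{r^2}|Q(z_0,r)|^{1-1/\sigma}\Big(\int_{Q(z_0,r)}|u-c|^{2\sigma}\,dz\Big)^{1/\sigma},
\]
with $C=C(n,\nu,\|d\|_{L_\infty(BMO)},\sigma)$.

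Taking $c=(u)_{z_0,r}$ and invoking the parabolic Poincar\'e-Sobolev inequality --- which uses the $L_{2,\infty}$ slice bound that the Caccioppoli inequality just produced on a slightly larger cylinder --- yields
\[
\Big(\frac{1}{|Q(z_0,r)|}\int_{Q(z_0,r)}|u-(u)_{z_0,r}|^{2\sigma}\,dz\Big)^{1/\sigma}\le Cr^2\Big(\frac{1}{|Q(z_0,3r)|}\int_{Q(z_0,3r)}|\na u|^q\,dz\Big)^{2/q}
\]
for a suitable $q<2$ (with $\sigma$ taken just above $1$). Combining the two estimates gives the reverse H\"older inequality
\[
\Big(\frac{1}{|Q(z_0,r/2)|}\int_{Q(z_0,r/2)}|\na u|^2\,dz\Big)^{1/2}\le C\Big(\frac{1}{|Q(z_0,3r)|}\int_{Q(z_0,3r)}|\na u|^q\,dz\Big)^{1/q},
\]
and the parabolic Gehring lemma applied as in \ci{GS} then upgrades $q$ to some $p>2$ and delivers (\ref{21}); the factor $6$ in (\ref{21}) absorbs the successive dilations in the Caccioppoli--Poincar\'e--Gehring chain. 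The main obstacle is the BMO drift in the Caccioppoli step, and the pivotal observation that makes it succeed is that skew-symmetry forces the leading constant-mean part of $d$ to contribute nothing, reducing everything to controlling the oscillation $\widetilde d$, which is done for free by John-Nirenberg.
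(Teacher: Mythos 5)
Your treatment of the skew-symmetric drift is essentially the same as the paper's (its equation \eqref{23}): the subtraction of the $x$-constant matrix $[d]_{x_0,r}(t)$ plus the integration-by-parts cancellation, followed by John--Nirenberg to put the oscillation $\widetilde d$ into $L_q$ with a norm controlled solely by $\|d\|_{L_\infty(BMO)}$, is exactly the right observation and the genuinely new ingredient here. Your Caccioppoli inequality for $u-c$ with an arbitrary \emph{constant} $c$, and the cancellation $\int u\eta(\bar d\nabla u)\cdot\nabla\eta\,dz=0$, are correct.

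The gap is in the Poincar\'e--Sobolev step, and it is not a technicality. The inequality you invoke,
\[
\Big(\frac{1}{|Q(z_0,r)|}\int_{Q(z_0,r)}|u-(u)_{z_0,r}|^{2\sigma}\,dz\Big)^{1/\sigma}\le Cr^2\Big(\frac{1}{|Q(z_0,3r)|}\int_{Q(z_0,3r)}|\nabla u|^q\,dz\Big)^{2/q},
\]
is \emph{false} for a general function in $L_{2,\infty}\cap W^{1,0}_q$: take $u=u(t)$, so $\nabla u\equiv 0$ but the left-hand side need not vanish. A spatial-gradient Poincar\'e inequality on a parabolic cylinder requires that the time oscillation of the spatial mean $[u]_{x_0,r}(t)$ be controlled, and this control can only come from the equation. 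The ``$L_{2,\infty}$ slice bound that Caccioppoli just produced'' does not supply it: that bound has the $L_{2\sigma}$ norm of $u-c$ on its right-hand side, so using it here is circular. This is precisely what the Giaquinta--Struwe device, which the paper follows, is designed to handle: one works with $\widehat u(x,t)=u(x,t)-u_{x_0,2R}(t)$ where $u_{x_0,2R}(t)$ is the weighted spatial mean, proves the Caccioppoli inequality for $\widehat u$ (the paper's Lemma \ref{l22}), and crucially establishes $\partial_t u_{x_0,2R}\in L_{3/2}$ from the weak formulation (the paper's \eqref{24}--\eqref{25}). Because $\widehat u(\cdot,t)$ then has zero weighted mean at each slice, the \emph{spatial} Poincar\'e--Sobolev inequality applies slicewise, and integrating in $t$ (together with the $\ess\sup$ bound \eqref{28}) gives the reverse H\"older inequality. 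Your constant $c=(u)_{z_0,r}$ destroys the slicewise mean-zero structure and leaves the term $[u]_{x_0,r}(t)-(u)_{z_0,r}$ unbounded. To repair the argument you should replace $c$ by the time-dependent weighted mean and carry out the $\partial_t u_{x_0,2R}$ estimate; alternatively, if you wish to keep a constant $c$, you must separately bound $\int_t|[u]_{x_0,r}(t)-(u)_{z_0,r}|^{2\sigma}\,dt$ by testing the equation against a spatial cutoff, which amounts to the same thing.
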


This theorem is a consequence of the reverse H\"older inequality, see  \ci{GS} for further references.
To prove the reverse H\"older inequality,
we need a Caccioppoli's type inequality. To formulate it, let us introduce  additional notation. Fix a non-negative cut-off functions $\varphi\in C_0^\infty (B(2))$
and $\chi_0 (t)$ with the following properties:
$$\varphi(x)=1 \qquad x\in B,\qquad\chi(t)=0 \quad t\leq -4,$$$$\qquad \chi_0(t)=(t+4)/3 \quad -4<t<-1,\qquad \chi_0(t)=1\quad t\geq -1.$$ Now, for a point $z_0=(x_0,t_0)$ and for $R>0$ such that $Q(z_0,2R)\in Q$, we let
$$\chi_{t_0,2R}(t)=\chi_0((t-t_0)/R^2),\qquad \varphi_{x_0,2R}(x)=\varphi((x-x_0)/R).$$ And then we can introduce a mean value of $u$ as in \ci{GS}
$$u_{x_0,2R}(t)=\int\limits_{B(x_0,2R)}u(x,t)\varphi^2_{x_0,2R}(x)dx
\Big(\int\limits_{B(x_0,2R)}\varphi^2_{x_0,2R}(x)dx \Big)^{-1}.$$
In our particular situation, we have
\begin{lemma}\la{l22}(Caccioppoli's type inequality) Under assumptions of Theorem \ref{t21}, the following inequality is valid:
$$\frac 12\int\limits_B|\widehat{u}(x,t_0)|^2
\varphi^2_{x_0,2R}(x)dx+
\nu\int\limits_{-1}^{t_0}\int\limits_B \chi^2_{t_0,2R}\varphi^2_{x_0,2R}|\na \widehat{u}|^2dz\leq$$
\be\la{22}\leq\frac 12\int\limits_{-1}^{t_0}\int\limits_B |\widehat{u}|^2 \varphi^2_{x_0,2R}\pa_t\chi^2_{t_0,2R}dz
-\int\limits_{-1}^{t_0}\int\limits_B\chi^2_{t_0,2R}(a\na\widehat{u})
\cdot\na\varphi^2_{x_0,2R}\widehat{u}dz\ee
$$-\int\limits_{-1}^{t_0}\int\limits_B\chi^2_{t_0,2R}((d-[d]_{x_0,2R})\na\widehat{u})
\cdot\na\varphi^2_{x_0,2R}\widehat{u}dz,$$
where
$$\widehat{u}(x,t)=u(x,t)-u_{x_0,2R}(t).$$ Inequality (\ref{22}) holds for a.a. $t_0\in ]-1,0[$, for all $x_0\in B$, and for all $R>0$ subject to the additional condition $Q(z_0,R)\subset Q$.
\end{lemma}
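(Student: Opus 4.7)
The plan is to derive (\ref{22}) by applying the local energy inequality directly to the shifted function $\widehat u$ and exploiting two separate cancellations: the vanishing of the $\varphi^2_{x_0,2R}$-weighted spatial mean of $\widehat u$, and an integration-by-parts identity for constant skew-symmetric matrices. Since $u_{x_0,2R}(t)$ depends only on time, $\nabla u=\nabla\widehat u$, so $\widehat u$ weakly solves
$$
\partial_t\widehat u - \dv(A\nabla\widehat u) = -\dot u_{x_0,2R}(t).
$$
Mimicking the derivation of (\ref{i7}) for this modified equation --- multiplying by $\chi^2_{t_0,2R}\varphi^2_{x_0,2R}\widehat u$, integrating over $B\times(-1,t_0)$, integrating by parts in $t$ and $x$, using skew-symmetry to eliminate the $d\na\widehat u\cdot\na\widehat u$ contribution, and invoking the ellipticity (\ref{i3}) to bound $a\na\widehat u\cdot\na\widehat u$ from below by $\nu|\na\widehat u|^2$ --- produces the inequality (\ref{22}) with $d$ in place of $d-[d]_{x_0,2R}$ in the last term and with one extra term
$$
-\int_{-1}^{t_0}\!\int_B \chi^2_{t_0,2R}\varphi^2_{x_0,2R}\,\dot u_{x_0,2R}(t)\,\widehat u\,dz
$$
on the right. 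The time boundary contribution at $t=-1$ is automatically absent because $\chi^2_{t_0,2R}$ is supported in $\{t\ge t_0-4R^2\}$ and $t_0-4R^2\ge -1$ by the assumption $Q(z_0,2R)\subset Q$.

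The extra term above vanishes identically: its integrand factors as $\dot u_{x_0,2R}(t)\chi^2_{t_0,2R}(t)\cdot\int_B\varphi^2_{x_0,2R}(x)\widehat u(x,t)\,dx$, and the spatial integral is zero at each fixed $t$ by the very definition of $u_{x_0,2R}(t)$ as the $\varphi^2_{x_0,2R}$-weighted mean of $u$. To replace $d$ by $d-[d]_{x_0,2R}$ in the remaining term, I will apply pointwise in $t$ the identity
$$
\int_B D_{ij}\,\partial_j\widehat u\,\widehat u\,\partial_i\varphi^2_{x_0,2R}\,dx=0
$$
valid for every constant-in-$x$ skew-symmetric matrix $D=(D_{ij})$. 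Integration by parts in $x_j$ rewrites the left-hand side as the negative of itself plus $\int_B D_{ij}\widehat u^2\,\partial_i\partial_j\varphi^2_{x_0,2R}\,dx$, and this last integral is zero because a skew tensor is contracted with a symmetric one. Applying the identity with $D=[d]_{x_0,2R}(t)$ and subtracting from the $d$-contribution of the last term converts it into the displayed $(d-[d]_{x_0,2R})$ form of (\ref{22}).

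The principal technical obstacle is that a suitable weak solution has only the regularity (\ref{i5}), so the ``test function'' $\chi^2_{t_0,2R}\varphi^2_{x_0,2R}\widehat u$ is not directly admissible in (\ref{i6}) and the integration by parts in $t$ above is only formal. The standard remedy is to run the same computation for a Steklov time-average of $u$ --- for which all manipulations are classical --- and pass to the limit using (\ref{i5}); the resulting inequality then holds for almost every $t_0\in(-1,0)$, as required in Definition \ref{id1}. This approximation step is routine in the Caccioppoli-type literature, cf.~\cite{GS}.
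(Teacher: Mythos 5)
Your two cancellation mechanisms are exactly the ones the paper uses: the vanishing of $\int_B \varphi_{x_0,2R}^2\,\widehat u\,dx$ kills the $\dot u_{x_0,2R}$ term (the paper encodes this equivalently via the explicit formula \eqref{25} for $\partial_t u_{x_0,2R}$), and the constant-in-$x$ skew-matrix identity \eqref{23} inserts $-[d]_{x_0,2R}$ into the last term. However, your overall plan --- write the equation satisfied by $\widehat u$ and then \emph{re-derive} a local energy inequality by testing against $\chi^2\varphi^2_{x_0,2R}\widehat u$, justified by Steklov averaging --- is not what the paper does, and it has a gap. The paper does \emph{not} derive the local energy inequality from the weak formulation \eqref{i6}; it \emph{assumes} \eqref{i7} as part of Definition \ref{id1}, and the remark just after the definition explicitly says that it is not clear whether \eqref{i7} follows from \eqref{i5}--\eqref{i6} in the BMO setting. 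The proof of Lemma \ref{l22} then simply substitutes $u=\widehat u+u_{x_0,2R}(t)$ into the \emph{hypothesized} inequality \eqref{i7} with the test function $\chi^2\varphi^2_{x_0,2R}$, expands, and invokes \eqref{23} and \eqref{25}.

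Calling the Steklov step ``routine, cf.\ \cite{GS}'' is where your argument breaks. In \cite{GS} the leading coefficients are bounded, so $A\nabla u\in L_2$, $(A\nabla u)_h\to A\nabla u$ in $L_2$, and the crucial term $\int\chi^2\varphi^2(d\nabla u)_h\cdot\nabla\widehat u_h\,dz$ passes to the pointwise-zero limit $\int\chi^2\varphi^2\,d\nabla\widehat u\cdot\nabla\widehat u\,dz$. Here $d\in L_\infty(\mathrm{BMO})$ only, so $d\nabla u$ is only in $L_q$ for $q<2$; the product $(d\nabla u)_h\cdot\nabla\widehat u_h$ is not uniformly integrable in $h$, and the limit need not vanish. (One also cannot invoke the higher integrability $\nabla u\in L_p$, $p>2$, since that is Theorem \ref{t21}, for which Lemma \ref{l22} is the key ingredient --- that would be circular.) So the approximation you dismiss as a technicality is precisely the obstruction the paper's definition is designed to avoid. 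The fix is to take \eqref{i7} as given, as the paper does, rather than re-prove it; once you do that, the two cancellations you identified are correct and carry the rest of the argument.
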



\begin{proof} There are two important points to note. The first one is that for any skew-symmetric matrix $d_0$, depending on $t$ only, we have
\be\la{23}\int\limits_Qd_0\na u\cdot \na \varphi udz=0\ee
whenever $\varphi\in C^\infty_0(Q)$.  The proof is straightforward integration by part.

The second point is that, see \ci{GS},
\be\la{24}
\pa_t u_{x_0,2R}\in L_\frac 32(-1,0).\ee
To see this, we take as test function in (\ref{i6}) the function $\varphi^2_{x_0,2R}(x)\eta(t)$
and conclude
\be\la{25}\pa_t u_{x_0,2R}(t)=-\int\limits_{B(x_0,2R)}A(z)\na u(z)\cdot \na\varphi^2_{x_0,2R}(x)dx\Big /\int\limits_{B(x_0,2R)}\varphi^2_{x_0,2R}(x)dx\ee

Next, we replace $u(x,t)$ with $\widehat{u}(x,t)+u_{x_0,2R}(t)$ in local energy inequality (\ref{i7}) and take $\varphi=\chi^2 \varphi^2_{x_0,2R}$ with $\chi$ from $C^1_0(-1,1)$. Then  terms which do not contain spatial derivatives can be transformed as follows
$$\frac 12\int\limits_{B(x_0,2R)}|\widehat{u}(x,t_0)+u_{x_0,2R}(t_0)|^2\varphi^2_{x_0,2R}(x)
dx=$$$$=
\frac 12\int\limits_{B(x_0,2R)}|\widehat{u}(x,t_0)|^2\varphi^2_{x_0,2R}(x)dx+\frac 12|u_{x_0,2R}(t_0)|^2
\int\limits_{B(x_0,2R)}\varphi^2_{x_0,2R}dx,$$
and
$$\frac 12\int\limits_{-1}^{t_0}\int\limits_B\varphi^2_{x_0,2R}(x)
|\widehat{u}(x,t)+u_{x_0,2R}(t)|^2\pa_t\chi^2(t)dx\,dt=$$
$$\frac 12\int\limits_{-1}^{t_0}\int\limits_B\varphi^2_{x_0,2R}(x)
|\widehat{u}(x,t)|^2\pa_t\chi^2(t)dx\,dt-$$
$$-\int\limits_{-1}^{t_0}\chi^2(t)u_{x_0,2R}(t)\pa_t u_{x_0,2R}(t)dt\int\limits_{B(x_0,2R)}\varphi^2_{x_0,2R}dx+$$$$+\frac 12|u_{x_0,2R}(t_0)|^2\chi^2(t_0)
\int\limits_{B(x_0,2R)}\varphi^2_{x_0,2R}dx.$$
Now, the local energy inequality, together with the last two  identities, implies
$$\frac 12\int\limits_{B(x_0,2R)}\chi^2(t_0)|\widehat{u}(x,t_0)|^2dx+\nu \int\limits_{-1}^{t_0}\int\limits_B\chi^2(t)\varphi^2_{x_0,2R}(x)
|\na \widehat{u}(x,t)|^2dx\,dt\leq$$
$$\leq \frac 12\int\limits_{-1}^{t_0}\int\limits_B\varphi^2_{x_0,2R}(x)
|\widehat{u}(x,t)|^2\pa_t\chi^2(t)dx\,dt-\int\limits_{-1}^{t_0}\int\limits_B
\chi^2A\na \widehat{u}\cdot\na \varphi^2_{x_0,2R} \widehat{u}dx\,dt-$$
$$-\int\limits_{-1}^{t_0}\chi^2(t)u_{x_0,2R}(t)\pa_t u_{x_0,2R}(t)dt\int\limits_{B(x_0,2R)}\varphi^2_{x_0,2R}dx-$$
$$-\int\limits_{-1}^{t_0}\int\limits_B
\chi^2A\na \widehat{u}\cdot\na \varphi^2_{x_0,2R} u_{x_0,2R}dx\,dt.$$
By the (\ref{25}), the sum of the last two terms is zero and from (\ref{23}) it follows that
$$\frac 12\int\limits_{B(x_0,2R)}|\widehat{u}(x,t_0)|^2\chi^2(t_0)\varphi^2_{x_0,2R}(x)dx+$$$$+\nu\int\limits_{-1}^{t_0}\int\limits_B\chi^2(t)\varphi^2_{x_0,2R}(x)
|\na \widehat{u}(x,t)|^2dx\,dt\leq$$
$$\leq \frac 12\int\limits_{-1}^{t_0}\int\limits_B\varphi^2_{x_0,2R}(x)
|\widehat{u}(x,t)|^2\pa_t\chi^2(t)dx\,dt-$$
$$-\int\limits_{-1}^{t_0}\int\limits_B
\chi^2a\na \widehat{u}\cdot\na \varphi^2_{x_0,2R} \widehat{u}dx\,dt-$$$$-\int\limits_{-1}^{t_0}\int\limits_B
\chi^2(d-[d]_{x_0,2R})\na \widehat{u}\cdot\na \varphi^2_{x_0,2R} \widehat{u}dx\,dt.$$
Here,
$$[d]_{x_0,2R}(t)=\frac 1{|B(2R)|}\int\limits_{B(x_0,2R)}d(x,t)dx.$$
So, inequality (\ref{22}) follows if we  choose  the cut-off function $\chi$ in an appropriate way.
\end{proof}

\begin{proof}[Proof of Theorem \ref{t21}]
Using known simple arguments, we can derive from (\ref{22}) the following estimate
$$I\equiv \frac 12\int\limits_B|\widehat{u}(x,t_0)|^2
\varphi^2_{x_0,2R}(x)dx+
\int\limits_{-1}^{t_0}\int\limits_B \chi^2_{t_0,2R}\varphi^2_{x_0,2R}|\na \widehat{u}|^2dz\leq$$
$$\leq c\Big(\frac 1{R^2}\int\limits_{Q(z_0,2R)}|\widehat{u}|^2dz+\frac 1R
\int\limits_{Q(z_0,2R)}(|\na \widehat{u}|\varphi_{x_0,2R}\chi_{t_0,2R})| \widehat{u}||d-[d]_{x_0,2R}|dz\Big).$$

We now fix an arbitrary number $s\in ]1,2[$. Let us denote as usual $s'=s/(s-1)$.
Then the right hand side of the latter inequality can be estimated with the help of H\"older inequality by
$$\frac c{R^2}\int\limits_{Q(z_0,2R)}|\widehat{u}|^2dz+\frac cR
\int\limits^{t_0}_{t_0-(2R)^2}\Big(\int\limits_{B(x_0,2R)}|d-[d]_{x_0,2R}|^{s'}
dx\Big)^{\frac 1{s'}}\times$$
$$\times \Big(\int\limits_{B(x_0,2R)}(|\na \widehat{u}|\varphi_{x_0,2R}\chi_{t_0,2R})^s|\widehat{u}|^sdx\Big)^\frac 1s.$$
Applying H\"older's inequality one more time, we find
$$I\leq \frac c{R^2}\int\limits_{Q(z_0,2R)}|\widehat{u}|^2dz+$$$$+\frac cR R^{\frac n{s'}}\ess\sup\limits_{t_0-(2R)^2<t<t_0} \sup\limits_{B(x_0,2R)\subset B }\Big(\frac 1{|B(2R)|}\int\limits_{B(x_0,2R)}|d-[d]_{x_0,2R}|^{s'}
dx\Big)^{\frac 1{s'}}\times$$$$\times \int\limits^{t_0}_{t_0-(2R)^2}\Big(\int\limits_{B(x_0,2R)}|\na \widehat{u}|^2\varphi^2_{x_0,2R}\chi^2_{t_0,2R}dx\Big)^\frac 12\Big(\int\limits_{B(x_0,2R)}|\widehat{u}|^\frac {2s}{2-s}dx\Big)^\frac{2-s}{2s}\leq$$
$$\leq \frac c{R^2}\int\limits_{Q(z_0,2R)}|\widehat{u}|^2dz+\frac {c(s)}R R^{\frac n{s'}}\|d\|_{L_\infty(BMO)}\Big(\int\limits_{Q(z_0,2R)}|\na \widehat{u}|^2\varphi^2_{x_0,2R}\chi^2_{t_0,2R}dz\Big)^\frac 12\times$$
$$\times \Big(\int\limits^{t_0}_{t_0-(2R)^2}\Big(\int\limits_{B(x_0,2R)}|\widehat{u}|^\frac {2s}{2-s}dx\Big)^\frac{2-s}{s}dt\Big)^\frac 12.$$
Summarizing our efforts, we have
$$\frac 12\int\limits_B|\widehat{u}(x,t_0|^2\varphi^2_{x_0,2R}(x)dx+
\int\limits_{-1}^{t_0}\int\limits_B \chi^2_{t_0,2R}\varphi^2_{x_0,2R}|\na \widehat{u}|^2dz\leq$$
\be\la{26}\leq c(s)(1+\Gamma^2)R^{(\frac n{s'}-1)2}\int\limits^{t_0}_{t_0-(2R)^2}\Big(\int\limits_{B(x_0,2R)}|\widehat{u}|^\frac {2s}{2-s}dx\Big)^\frac{2-s}{s}dt,\ee where $\Gamma=\|d\|_{L_\infty(BMO)}$.
Now, let us discuss simple consequences of (\ref{26}) following \ci{GS}. By Poincare-Sobolev inequality, we have
for
\be\la{27}s\leq \frac n{n-1}\ee
the following inequality
$$\Big(\int\limits_{B(x_0,2R)}|\widehat{u}|^\frac {2s}{2-s}dx\Big)^\frac{2-s}{s}\leq c(s)R^{n\frac{2-s}s+2-n}\int\limits_{B(x_0,2R)}|\na {u}|^2dx.$$
Combining (\ref{27}) and  (\ref{26}), we find
$$\int\limits_B|\widehat{u}(x,t_0)|^2\varphi^2_{x_0,2R}(x)dx\leq c(s)(1+\Gamma^2)\int\limits_{Q(z_0,2R)}|\na {u}|^2dx.$$
Hence, assuming that $Q(z_0,3R)\subset Q$, we have the second estimate
\be\la{28}\ess\sup\limits_{t_0-R^2<t<t_0}\int\limits_{B(x_0,R)}|\widehat{u}|^2(x,t)dx\leq c(s)(1+\Gamma^2)\int\limits_{Q(z_0,3R)}|\na {u}|^2dx.\ee

Now, our aim is going to be the so-called reverse H\"older inequality. We first assume that the number $s$ satisfies the condition
\be\la{29}1<s<\frac {2n}{2n-1},\qquad n=2,3,...\ee
Obviously, (\ref{29}) implies (\ref{27}) and
\be\la{210}\frac {2n}{2n-1}\leq\frac 43\leq\frac {4n}{3n-2}\leq 2,\qquad n=2,3,...\ee It is not difficult to show that under assumption (\ref{29}) there exist numbers
$0<\lb<1$, $0<\mu<1$, and $1<r<2$ such that
$$\frac {2s}{2-s}=2\lb+\frac {nr}{n-r}\mu$$
$$\lb+\mu=1$$$$\frac {nr}{n-r}\mu\frac {2-s}s=1.$$
Using these exponents, we derive from (\ref{26})
$$\int\limits_{Q(z_0,R)}|\na {u}|^2dz\leq c(s)(1+\Gamma^2)R^{(\frac n{s'}-1)2}\int\limits^{t_0}_{t_0-(2R)^2}\Big(\int\limits_{B(x_0,2R)}|\widehat{u}|^{2\lb+
\frac {nr}{n-r}\mu}dx\Big)^\frac{2-s}{s}dt\leq$$
$$\leq c(s)(1+\Gamma^2)R^{(\frac n{s'}-1)2}\int\limits^{t_0}_{t_0-(2R)^2}\Big(\int\limits_{B(x_0,2R)}
|\widehat{u}|^2dx\Big)^{\frac {2-s}s\lb}\Big(\int\limits_{B(x_0,2R)}
|\widehat{u}|^\frac{rn}{n-r}dx\Big)^{\frac {2-s}s\mu}dt.$$
The last multiplier can be estimated with the help of Sobolev's inequality
$$\int\limits_{Q(z_0,R)}|\na {u}|^2dz\leq c(s)(1+\Gamma^2)R^{(\frac n{s'}-1)2}
\ess\sup\limits_{t_0-(2R)^2<t<t_0}\Big(\int\limits_{B(x_0,2R)}
|\widehat{u}(x,t)|^2dx\Big)^{\frac 12}\times$$
$$\times R^\frac {2(r-1)}r\Big(\int\limits_{Q(z_0,2R)}|\na {u}|^rdz\Big)^\frac 1r.$$
To estimate the first multiplier on the right hand side of the last inequality, one can apply (\ref{28}) in the following way
$$\int\limits_{B(x_0,2R)}|u(x,t)-u_{x_0,2R}(t)|^2dx\leq c\int\limits_{B(x_0,2R)}|u(x,t)-u_{x_0,4R}(t)|^2dx$$
$$\leq c(s)(1+\Gamma^2)\int\limits_{Q(z_0,6R)}|\na {u}|^2\,dz$$
for a.a. $t\in ]t_0-(2R)^2,t_0[$. Combining the latter inequality, we arrive at the reverse H\"older inequality
$$\frac 1{|Q(R)|}\int\limits_{Q(z_0,R)}|\na {u}|^2dz\leq
c(s)(1+\Gamma^2)^2\Big(\frac 1{|Q(6R)|}\int\limits_{Q(z_0,6R)}|\na {u}|^2dz\Big)^\frac 12\times$$$$\times\Big(\frac 1{|Q(2R)|}\int\limits_{Q(z_0,2R)}|\na {u}|^rdz\Big)^\frac 1r$$
which holds for some $r\in ]1,2[$ and for any $Q(z_0,6R)\subset Q$. This leads to a higher integrability,
see \ci{GS}.
\end{proof}

\subsection{Moser iteration}
To avoid some technical difficulties, we will assume that matrices $a$ and $b$ and solution $u$ are sufficiently smooth in $Q$. Later  we shall show how to remove this assumption.  We also assume that our function $u$ is strictly positive in the following sense
\be\la{mi1}u(z)\geq \al_R>0\qquad \forall z\in \overline{Q}(R)\ee
for any $0<R<1$.  Sometimes assumption (\ref{mi1}) is not necessary, but for simplicity we will assume
it is satisfied.
We fix the following notation
$$\varepsilon^2(m)=\Big|\frac 1{2m}-1\Big|,\qquad p=\frac{2(n+2)}n$$ and, assuming that condition (\ref{29}) holds,  let
$$q=\frac {2s}{2-s},\qquad \gamma=\frac pq>1.$$
\begin{lemma}\label{mil1} For any $m_1\geq m_0>1/2 $  and  for any $0<\varrho<r$ with $Q(z_0,r)\subset Q$, we have
\begin{equation}
\label{mi2} \sup\limits_{z\in Q(z_0,\varrho)}u^{m_1}(z)\leq\frac {c_1(n,\nu, s, \Gamma, \varepsilon_0)}{(r-\varrho)^{\frac{n+2)}q}}
\Big(\int\limits_{Q(z_0,r)}u^{m_1q}(z)dz\Big)^\frac 1q,
\end{equation}where $\varepsilon_0=\varepsilon(m_0)$.\end{lemma}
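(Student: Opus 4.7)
The strategy is a classical Moser iteration applied to the power $v=u^m$, adapted to the non-symmetric BMO setting along the lines of Lemma~\ref{l22}.

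\emph{Caccioppoli for powers.} Fix $m\ge m_0>1/2$. Using the smoothness of $u$ and the positivity assumption \eqref{mi1}, the function $\varphi^2_{x_0,2R}\chi^2_{t_0,2R}u^{2m-1}$ is admissible in \eqref{i6}. The time derivative produces $\frac{1}{2m}\pa_t u^{2m}$, while the spatial term splits into the principal piece $(2m-1)\int\varphi^2\chi^2 u^{2m-2}\,a\na u\cdot\na u$ --- which, via $u^{2m-2}|\na u|^2=m^{-2}|\na v|^2$ and ellipticity, is bounded below by $\nu(2m-1)m^{-2}\int\varphi^2\chi^2|\na v|^2$ --- and a cross term $2\int\varphi\chi^2 u^{2m-1}A\na u\cdot\na\varphi$. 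The diagonal $d$-contribution to the principal piece vanishes by skew-symmetry. In the cross term we replace $d$ by $d-[d]_{x_0,2R}$; this is legitimate since the constant skew matrix $[d]_{x_0,2R}$, integrated against $\na u^{2m}\cdot\na\varphi^2$, vanishes by one integration by parts (skew-symmetry against symmetry of second partials). The same H\"older splitting with exponents $s',s$ that led from \eqref{22} to \eqref{26}, combined with the John--Nirenberg bound on $\|d-[d]_{x_0,2R}\|_{L_{s'}(B(x_0,2R))}$, then yields
\[
\sup_{|t-t_0|<R^2}\int_{B(x_0,R)} v^2\,dx +\int_{Q(z_0,R)}|\na v|^2\,dz \le \frac{c}{R^{2}}\int_{-R^2}^{0}\Big(\int_{B(x_0,2R)}v^{q}\,dx\Big)^{2/q}dt
\]
for every $Q(z_0,2R)\subset Q(z_0,r)$, with $c=c(n,\nu,s,\Gamma,\varepsilon_0)$. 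The $m$-dependence is controlled because the only bad factor, $(2m-1)^{-1}$, is bounded by a constant times $1/\varepsilon_0$ on $[m_0,\infty)$.

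\emph{One-step gain and iteration.} Inserting $v\psi$ into the parabolic Sobolev embedding $\|w\|_{L_p(Q)}^p\le c\,(\sup_t\|w(\cdot,t)\|_{L_2}^2)^{2/n}\int|\na w|^2\,dz$ with $p=2(n+2)/n$, where $\psi$ is a spatial-temporal cut-off between $Q(z_0,\varrho')$ and $Q(z_0,R)$, and combining with the Caccioppoli bound above, followed by one H\"older-in-$t$ step (permissible since $q>2$) to replace the mixed norm $\|v\|_{L_2(t;L_q(x))}$ by $\|v\|_{L_q(Q(z_0,R))}$, one arrives at a reverse-H\"older-type bound
\[
\Big(\int_{Q(z_0,\varrho')}v^p\,dz\Big)^{1/p}\le \frac{c}{(R-\varrho')^{\alpha}}\Big(\int_{Q(z_0,R)}v^q\,dz\Big)^{1/q}.
\]
Rewriting this in terms of $u$ via $v=u^m$ and $mp=m\gamma q$, and iterating on the geometric sequences $m_k=m_1\gamma^k$, $R_k=\varrho+(r-\varrho)2^{-k}$, a standard logarithmic summation --- both $\sum 1/m_k$ and $\sum k/m_k$ converge because $\gamma>1$ and $m_1\ge m_0>1/2$ --- gives \eqref{mi2} in the limit $k\to\infty$, with the prefactor exponent $(n+2)/q$ forced by parabolic scaling.

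\emph{Main obstacle.} The delicate step is the Caccioppoli inequality of the first part: the BMO cross-term must be absorbed into the left-hand gradient term while the resulting constant remains bounded as $m$ ranges over $[m_0,\infty)$. This rests simultaneously on the cancellation $d\na u\cdot\na u=0$, the admissibility of subtracting $[d]_{x_0,2R}$, and the H\"older--John--Nirenberg split used in Lemma~\ref{l22}. Once the Caccioppoli is in place with the stated dependence on $\varepsilon_0$, the remainder of the argument is a routine Moser iteration.
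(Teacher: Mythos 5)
Your proposal follows essentially the same route as the paper: test the equation with $\psi^2 u^{2m-1}$ to obtain the weighted energy identity (the paper's \eqref{mi3}), exploit $d\nabla w\cdot\nabla w=0$ and the integration-by-parts cancellation of the constant skew part $[d]_{x_0,2R}$, apply the H\"older/John--Nirenberg split from Section 2.1 to absorb the BMO cross term, invoke the parabolic embedding $\|w\|_{L_p}\le c|w|_{2,Q}$ with $p=2(n+2)/n$, and iterate on the geometric scales $m_k=m_1\gamma^k$, $R_k$. This matches the paper's proof, including the observation that $m\ge m_0>1/2$ keeps the $(2m-1)^{-1}$ factor --- equivalently the $\varepsilon_0$-dependence --- under uniform control.
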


\begin{proof}
Set $w=u^m$. For any $m\neq 0$, we can derive from (\ref{\ione})
$$\frac 1{2}\int\limits_{B(x_0,r)}\psi^2\pa_t|w|^{2}dx+\frac {2m-1}m\int\limits_{B(x_0,r)}\psi^2 a\na w\cdot \na wdx=$$
\be\la{mi3}=-\Big(\int\limits_{B(x_0,r)}a\na w\cdot w\na \psi^2dx+\int\limits_{B(x_0,r)}d\na w\cdot w\na \psi^2dx\Big)\, ,\ee
with a cut-off function $\psi$ satisfying:
$$\psi(x,t)=\varphi(x)\chi(t),$$
$$\varphi(x)=1 \quad x\in B(x_0,\varrho),\qquad \varphi(x)=0 \quad x\notin B(x_0,r),$$
$$0\leq \varphi \leq 1,\qquad |\na \varphi|\leq \frac c{r-\varrho},$$
$$\chi(t)=0\quad t<t_0-r^2,\qquad \chi(t)=1 \quad t>t_0-\varrho^2,$$ $$\chi(t)=\frac {t-(t_0-r^2)}{r^2-\varrho^2}\quad t_0-r^2\leq t\leq t_0-\varrho^2.$$

Next, we introduce the following sequence of exponents
\be\la{mi4}l_0=q,\qquad l_i=\ga^il_0,\qquad i=0,1,...,\ee
If we let
$$m_i=l_im_1/p,\qquad i=1,2,...,$$
then we have
\be\la{mi5} m_iq=l_{i-1}m_1,
\qquad
\varepsilon^2(m_i)
=\frac 1{2m_i}-1>\varepsilon^2_0, \qquad i=1,2,....\ee

Letting $m=m_i$ in  (\ref{mi3}) and taking into account (\ref{mi5}), we find
$$\sup\limits_{t_0-\varrho^2<t<t_0}\int\limits_{B(x_0,\varrho)}|w(x,t)|^2dx
+\varepsilon_0^2\nu\int\limits_{Q(z_0,r)}\psi^2|\na w|^2dz\leq$$
\be\la{mi6}\leq \frac c{(r-\varrho)^2}\int\limits_{Q(z_0,r)}\psi^2| w|^2dz+c\nu^{-1}\int\limits_{Q(z_0,r)}\psi|\na\psi|w|\na w|dz\ee
$$+c\int\limits_{Q(z_0,r)}|d-[d]_{x_0,r}|\psi|\na\psi|w |\na w|dz.$$ The same arguments as in Section 2 show that the latter inequality gives us:
$$|w|^2_{2,Q(z_0,\varrho)}\equiv\sup\limits_{t_0-\varrho^2<t<t_0}
\int\limits_{B(x_0,\rho)}|w(x,t)|^2dx
+\int\limits_{Q(z_0,\varrho)}|\na w|^2dz\leq$$
$$\leq \frac {c(s,\varepsilon_0)}{(r-\varrho)^2}(1+\Gamma^2)r^{\frac {2(n+2)}{s'}}\Big(\int\limits_{Q(x_0,r)}|{w}|^qdz\Big)^
{\frac 2q }$$
with $s$ satisfying condition (\ref{29}). By the known embedding theorem, see \ci{LSU}, we have $\|w\|_{p,Q(z_0,\varrho)}\leq c |w|_{2,Q(z_0,\varrho)}$ with $p=\frac {2(n+2)}n$ and, hence,
$$\Big(\frac 1{|Q(\varrho)|}\int\limits_{Q(z_0,\varrho)}|w|^pdz\Big)^\frac 1p\leq c(s,\varepsilon_0)(1+\Gamma)\Big(\frac r{r-\varrho}\Big)\times$$\be\la{mi7}\times\Big(\frac r\varrho\Big)^\frac n2
\Big(\frac 1{|Q(r)|}\int\limits_{Q(z_0,r)}|w|^qdz\Big)^{\frac 1q}.\ee It is worth noting that, under assumption (\ref{29}) we have $p>q$.

Our further steps are  routine. We let
$$\varrho=R_i=\frac R2+\frac R{2^{i+1}},\qquad r=R_{i-1},\qquad i=1,2,...,$$
in (\ref{mi7}) and find
$$\Big(\frac 1{|Q(R_i)|}\int\limits_{Q(z_0,R_i)}|u|^{m_1l_i}dz\Big)^\frac 1{l_i}\leq
$$$$\leq (c(s,\varepsilon_0,\Gamma)2^i)^\frac 1{\gamma^{i-1}}\Big(\frac 1{|Q(R_{i-1})|}\int\limits_{Q(z_0,R_{i-1})}|u|^{m_1l_{i-1}}dz\Big)^\frac 1{l_{i-1}}$$
 for $i=1,2,...$. After iterations, we arrive at (\ref{mi3})
 with $\varrho=R/2$ and $r=R$. General case is deduced from this particular one with help of known arguments.
\end{proof}

 To see  what happens if $0<m<1/2$, we have to introduce additional notation
 $$Q^+(z_0,R)=B(x_0,R)\times ]t_0,t_0+R^2[, \qquad Q^+(R)=Q^+(0,R),$$
 $$\widetilde{Q}(z_0,R)=B(x_0,R)\times ]t_0-R^2,t_0+R^2[, \qquad \widetilde{Q}(R)=\widetilde{Q}(0,R).$$
 \begin{lemma}\label{mil2} For any $0<m_1<1/2$  and  for any $0<\varrho<r$ provided $\widetilde{Q}(z_0,r)\subset Q$, we have
\begin{equation}
\label{mi8} \sup\limits_{z\in \widetilde{Q}(z_0,\varrho)}u^{m_1}(z)\leq\frac {c_2(n,\nu, s, \Gamma)}{(r-\varrho)^{\frac{n+2}q}}
\Big(\int\limits_{\widetilde{Q}(z_0,r)}u^{m_1q}(z)dz\Big)^\frac 1q.
\end{equation}\end{lemma}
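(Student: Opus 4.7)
The plan is to derive Lemma \ref{mil2} from Lemma \ref{mil1} in two steps: first, upgrading that estimate from past cylinders to symmetric cylinders $\widetilde{Q}$, and second, running a standard interpolation-and-absorption argument to cover the small-exponent range $0<m_1<1/2$ excluded by Lemma \ref{mil1}.

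For the first step I would fix some $m_*>1/2$ (say $m_*=1$) so that Lemma \ref{mil1} applies with that exponent, and use the geometric fact that for any $z_1=(x_1,t_1)\in\widetilde{Q}(z_0,\varrho)$ the past cylinder $Q(z_1,r-\varrho)$ lies inside $\widetilde{Q}(z_0,r)$. Spatially this is immediate from $|x_1-x_0|<\varrho$, and in time it follows from the elementary inequality $\varrho^2+(r-\varrho)^2\leq r^2$ for $0<\varrho<r$. Applying Lemma \ref{mil1} at $z_1$ (with inner radius $(r-\varrho)/2$ and outer radius $r-\varrho$) and taking the supremum over $z_1\in\widetilde{Q}(z_0,\varrho)$ produces
\[
F(\varrho)^{m_*}\leq C(r-\varrho)^{-(n+2)/q}\Big(\int_{\widetilde{Q}(z_0,r)}u^{m_* q}\,dz\Big)^{1/q},
\]
with $F(\rho):=\sup_{\widetilde{Q}(z_0,\rho)}u$ and $C=C(n,\nu,s,\Gamma)$.

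For the second step I would bound $u^{m_* q}$ pointwise on $\widetilde{Q}(z_0,r)$ by $F(r)^{(m_*-m_1)q}\,u^{m_1 q}$ (using $m_1<m_*$) and apply Young's inequality with conjugate exponents $m_*/(m_*-m_1)$ and $m_*/m_1$ to the resulting product $F(r)^{m_*-m_1}\cdot[C(r-\varrho)^{-(n+2)/q}J]$, where $J:=(\int_{\widetilde{Q}(z_0,r)}u^{m_1 q}\,dz)^{1/q}$. This yields
\[
F(\varrho)^{m_*}\leq\tfrac{1}{2}F(r)^{m_*}+C\,(r-\varrho)^{-(n+2)m_*/(qm_1)}J^{m_*/m_1}.
\]
The standard absorption lemma (a bounded monotone $G\colon[\varrho_0,r_0]\to[0,\infty)$ with $G(\varrho)\leq\tfrac12G(r)+A(r-\varrho)^{-\gamma}$ for all $\varrho_0\leq\varrho<r\leq r_0$ satisfies $G(\varrho_0)\leq c(\gamma)A(r_0-\varrho_0)^{-\gamma}$), applied to $G=F^{m_*}$ with $\gamma=(n+2)m_*/(qm_1)$, then gives $F(\varrho_0)^{m_*}\leq C\,J^{m_*/m_1}(r_0-\varrho_0)^{-(n+2)m_*/(qm_1)}$; raising both sides to the power $m_1/m_*$ delivers exactly \eqref{mi8}.

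The point I expect to require the most care is the cylinder-inclusion geometry in the first step; the Young-and-absorption machinery in the second step is routine, though it will cause the constant to depend on $m_1$ (blowing up as $m_1\to 0$), which is standard for iterations going down in the exponent. All analytic inputs---ellipticity, BMO, smoothness, and the strict positivity \eqref{mi1}---enter only through the invocation of Lemma \ref{mil1}.
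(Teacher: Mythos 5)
Your Step 1 (covering the symmetric cylinder $\widetilde Q$ by past cylinders and invoking Lemma \ref{mil1} at the fixed exponent $m_*=1$) is geometrically sound. The genuine gap is in Step 2: the interpolation-and-absorption argument necessarily produces a constant that blows up as $m_1\to 0$, whereas the lemma asserts $c_2=c_2(n,\nu,s,\Gamma)$, \emph{independent of $m_1$}. You acknowledge the blow-up and call it ``standard,'' but it is fatal here. Concretely, your absorption lemma has exponent $\gamma=(n+2)m_*/(qm_1)\to\infty$, and the usual absorption constant grows at least like $\gamma^\gamma$; after the final $(m_1/m_*)$-th root the residual factor scales like $m_1^{-(n+2)/q}$. (A Bombieri-type power iteration $F(\varrho)\le A(r-\varrho)^{-\alpha}F(r)^{1-\beta}$ with $\beta=m_1/m_*$ picks up a factor $2^{\alpha/\beta^2}$ and fares no better.) This matters because Lemma \ref{mil2} is used in the crossover step of the Harnack proof (Theorem \ref{ht1}), where $m_1$ is chosen as $m_1=\frac{1}{q\varphi(r)}\ln\frac{\varphi(r)}{2c_4}$, so that $m_1\to 0$ exactly when $\varphi(r)\to\infty$; one then needs $\frac{1}{m_1}\ln c_2=o(\varphi(r))$. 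If instead $c_2\gtrsim m_1^{-a}$ for some $a>0$, then $\frac{1}{m_1}\ln c_2\gtrsim a\,q\,\varphi(r)$, of the same order as $\varphi(r)$, and neither branch of the paper's dichotomy (the contraction $\varphi(\varrho)\le\tfrac34\varphi(r)$, nor the absolute bound $\varphi(r)\le\mu_1(r-\varrho)^{-2(n+2)}$) can be extracted.

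The paper avoids this by running the Moser iteration \emph{directly} on exponents below $1/2$ rather than reducing to $m_*>1/2$ by interpolation. It picks $k$ so that $\gamma^{-2}m_1\le m_1'=\gamma^{-(k-1/2)}/2\le m_1$ and iterates on $m_i'=\gamma^{i-1}m_1'$. For $i\le k$ every $m_i'<1/2$, but $\varepsilon^2(m_i')=1/(2m_i')-1\ge\gamma^{1/2}-1$ uniformly in $i$ and $m_1$, so each Caccioppoli step has an $m_1$-independent constant; and the per-step constants enter the product with weights $\gamma^{-(i-1)}$ (a convergent geometric series), so the product stays bounded no matter how large $k$ is. The exit exponent $m_{k+1}'=\gamma^{1/2}/2>1/2$ is a fixed number, so Lemma \ref{mil1} then applies with a universal $\varepsilon_0$, and since $m_1/m_1'\le\gamma^2$ the inequality at level $m_1$ follows with the advertised $m_1$-independent constant. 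This exponent bookkeeping, not an interpolation shortcut, is the real content of the lemma.
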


\begin{proof}
We replace the function $\chi$ with the following one
$$\chi(t)=0\quad t>t'_0+r^2,\qquad \chi(t)=1 \quad t<t'_0+\varrho^2,$$ $$\chi(t)=\frac {-t+(t'_0+r^2)}{r^2-\varrho^2}\quad t'_0+\varrho^2\leq t\leq t'_0+r^2,\qquad t'_0=t_0-\Big(\frac 34\Big)^2r^2.$$
Then from (\ref{mi3}), we can derive (an analog of (\ref{mi6}))
$$\sup\limits_{t_0<t<t_0+\varrho^2}\int\limits_{B(x_0,\varrho)}|w(x,t)|^2dx
+\varepsilon^2(m)\nu\int\limits_{Q^+(z'_0,r)}\psi^2|\na w|^2dz\leq$$
\be\la{mi9}\leq \frac c{(r-\varrho)^2}\int\limits_{Q^+(z'_0,r)}\psi^2| w|^2dz+c\nu^{-1}\int\limits_{Q^+(z'_0,r)}\psi|\na\psi|w|\na w|dz\ee
$$+c\int\limits_{Q^+(z'_0,r)}|d-[d]_{x_0,r}|\psi|\na\psi|\omega |\na \omega|dz,\qquad z'_0=(x_0,t'_0).$$
Next, it is not so difficult to check that there exists a natural number $k$
with the following property
$$\frac 1 {\ga^2}m_1\leq m_1'=\ga^{-(k-\frac 12)}\frac 12\leq m_1.$$
And then, for this number $k$, we have
$$m'_1<m'_2<...<m'_k<\frac 12< m'_{k+1}<...,$$
where
$$m'_i=\frac {l_im'_1}p=\ga^{i-1}m_1'=\ga^{i-k-\frac 12}\frac 12,\qquad i=1,2,....$$
and numbers $l_i$ is defined by (\ref{mi4}). It is easy to check that
$$\varepsilon^2(m'_i)\geq \ga^\frac 12-1, \qquad i=1,2,...,k,$$
and then repeating derivation of (\ref{mi7}) for $w=u^{m'_i}$  with the same indices $i$, we find
$$\Big(\frac 1{|Q(\varrho)|}\int\limits_{Q(z'_0,\varrho)}|w|^pdz\Big)^\frac 1p\leq c(s,\Gamma)\Big(\frac r{r-\varrho}\Big)\times$$\be\la{mi10}\times\Big(\frac r\varrho\Big)^\frac n2
\Big(\frac 1{|Q(r)|}\int\limits_{Q(z'_0,r)}|w|^qdz\Big)^{\frac 1q}.\ee
Now, we consider (\ref{mi10}) for
$$r=r_i,\qquad \varrho=r_{i-1}, \qquad r_i=\frac r4+\frac 14\frac r{2^i}$$
and find
$$\Big(\frac 1{|Q(r_i)|}\int\limits_{Q^+(z'_0,r_i)}|u|^{m'_1l_i}dz\Big)^\frac 1{l_i}\leq
$$$$\leq (c(s,\Gamma)2^i)^\frac 1{\gamma^{i-1}}\Big(\frac 1{|Q(r_{i-1})|}\int\limits_{Q^+(z'_0,r_{i-1})}|u|^{m'_1l_{i-1}}dz\Big)^\frac 1{l_{k}}$$
 for $i=1,2,...,k$. After exactly $k$ iterations, we have
 $$\Big(\frac 1{|Q(3r/4)|}\int\limits_{Q^+(z'_0,3r/4)}|u|^{m'_{k+1}q}dz\Big)^\frac 1{l_k}=\Big(\frac 1{|Q(3r/4)|}\int\limits_{Q(z_0,3r/4)}|u|^{m'_{k+1}q}dz\Big)^\frac 1{l_k}\leq
$$$$\leq c(s,\Gamma)\Big(\frac 1{|Q(r)|}\int\limits_{Q^+(z'_0,r)}|u|^{m'_1q}dz\Big)^\frac 1{q}.$$
Since $m'_{k+1}>1/2$, we are in a position to apply Lemma \ref{mil1} letting there $m_1=m_0=m'_{k+1}$ and conclude that
$$\sup\limits_{z\in Q(z_0,r/2)}u^{m'_{k+1}}(z)\leq c(s,\Gamma)
\Big(\frac 1{r^{n+2}}\int\limits_{Q(z_0,3r/4)}u^{m'_{k+1}q}dz\Big)^\frac 1q.$$
Taking into account definition (\ref{mi4}) of $l_k$ and combining the latter inequalities, we find
$$\sup\limits_{z\in Q(z_0,r/2)}u^{m_1}(z)\leq \Big[c(s,\Gamma)\Big]^{(1+\ga^{-k})\frac {m_1}{m'_1}}\Big(\frac 1 {r^{n+2}}\int\limits_{Q^+(z'_0,r)}|u|^{m'_1q}dz\Big)^{\frac 1{q}\frac {m_1}{m'_1}}$$
and, by H\"older inequality, we have
\be\la{mi11}\sup\limits_{z\in Q(z_0,r/2)}u^{m_1}(z)\leq c(s,\Gamma)\Big(\frac 1 {r^{n+2}}\int\limits_{Q^+(z'_0,r)}|u|^{m'_1q}dz\Big)^{\frac 1{q}}.\ee
We may shift in time this estimate and show that
\be\la{mi12}\sup\limits_{z\in Q^+(z_0,r/2)}u^{m_1}(z)\leq c(s,\Gamma)\Big(\frac 1 {r^{n+2}}\int\limits_{Q^+(z''_0,r)}|u|^{m'_1q}dz\Big)^{\frac 1{q}},\ee
where $z''_0=(x_0,t''_0)$ and $t''_0=t_0-\frac {5r^2}{16}.$  From (\ref{mi11}) and (\ref{mi12}),  estimate (\ref{mi8})
 with $\varrho=r/2$ follows. General case is deduced from this particular one with help of known arguments.
\end{proof}

\begin{lemma}\label{mil3} For any $\varepsilon>0 $  and  for any $0<\varrho<r$ such that  $Q(z_0,r)\subset Q$, we have
\begin{equation}
\label{mi13} \sup\limits_{z\in Q(z_0,\varrho)}u^{-\varepsilon}(z)\leq\frac {c_3(n,\nu, s, \Gamma)}{(r-\varrho)^{\frac{n+2)}q}}
\Big(\int\limits_{Q(z_0,r)}u^{-\varepsilon  q}(z)dz\Big)^\frac 1q.
\end{equation}\end{lemma}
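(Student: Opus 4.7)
My plan is to carry out a Moser iteration with negative exponents, closely paralleling the proof of Lemma \ref{mil1}. For any $m<0$, the function $u^{2m-1}$ is well-defined by the positivity assumption (\ref{mi1}); testing the equation against $\psi^2 u^{2m-1}$ yields identity (\ref{mi3}) with $w=u^m$. The essential observation is that for $m<0$ the coefficient $(2m-1)/m = 2+1/|m|$ is uniformly bounded below by $2$, i.e.\ $\varepsilon^2(m)\ge 1$ for all such $m$. Thus, unlike in Lemma \ref{mil1}, no restriction $|m|\ge m_0>1/2$ is needed, and the constants in the iteration will be uniform in $m$.

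Treating the skew-symmetric part as in Lemma \ref{mil1} --- subtracting $[d]_{x_0,r}$ via identity (\ref{23}), applying H\"older with exponent $s'$ together with the $BMO$ bound on $d$, and invoking Sobolev embedding --- I obtain, for any $m<0$ and any $Q(z_0,r)\subset Q$, the reverse H\"older inequality
\[
\Big(\frac{1}{|Q(\varrho)|}\int_{Q(z_0,\varrho)}u^{mp}\,dz\Big)^{1/p}\le c(s,\Gamma)\,\frac{r}{r-\varrho}\Big(\frac{r}{\varrho}\Big)^{n/2}\Big(\frac{1}{|Q(r)|}\int_{Q(z_0,r)}u^{mq}\,dz\Big)^{1/q}
\]
analogous to (\ref{mi7}), with $c(s,\Gamma)$ independent of $m$.

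I then iterate with $m_i=-\varepsilon\gamma^{i-1}$ and $R_i=R/2+R/2^{i+1}$; the algebraic identity $pm_i=qm_{i+1}$ chains consecutive applications. The main subtlety --- and the principal obstacle --- is the sign bookkeeping: raising both sides to the (negative) power $1/m_i$ reverses the inequality, so the iteration produces a \emph{lower} bound
\[
\Psi_{i+1}\ge c(s,\Gamma)^{1/m_i}\,2^{i/|m_i|}\,\Psi_i,\qquad \Psi_i:=\Big(\frac{1}{|Q(R_{i-1})|}\int_{Q(z_0,R_{i-1})}u^{qm_i}\,dz\Big)^{1/(qm_i)}.
\]
Since $\sum_i i\,\gamma^{-(i-1)}<\infty$, the accumulated product equals $\exp\bigl(-C(s,\Gamma)/\varepsilon\bigr)$ for some finite $C(s,\Gamma)$. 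Letting $i\to\infty$ yields $\Psi_i\to\inf_{Q(z_0,R/2)}u$, hence
\[
\inf_{Q(z_0,R/2)}u\ \ge\ \exp\bigl(-C(s,\Gamma)/\varepsilon\bigr)\,\Psi_1.
\]
Raising to the power $-\varepsilon$ exactly cancels the $1/\varepsilon$-factor in the exponent and yields (\ref{mi13}) in the special case $\varrho=R/2$, $r=R$; the general case follows by the standard covering/scaling argument used in Lemmas \ref{mil1} and \ref{mil2}. This cancellation of $\varepsilon$-dependence in the final step is precisely why $c_3$ depends only on $n,\nu,s,\Gamma$ and not on $\varepsilon$.
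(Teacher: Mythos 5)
Your proof is correct in substance, but it takes a considerably longer route than the paper does. You carry out a Moser iteration directly on $u$ with the negative exponents $m_i=-\varepsilon\gamma^{i-1}$, which forces you to track the inequality reversals through the whole chain and to observe, at the end, that the accumulated factor $\exp(-C/\varepsilon)$ exactly cancels when raised to the power $-\varepsilon$. The paper instead makes a one-line reduction: since $u$ is a positive solution, $v=u^{-\varepsilon}$ is a positive \emph{subsolution} of the same equation --- the chain rule gives
\[
\partial_t v-{\rm div}(A\nabla v)=-\varepsilon(\varepsilon+1)\,u^{-\varepsilon-2}\,\nabla u\cdot a\nabla u\le 0,
\]
the skew part of $A$ dropping out of the quadratic form --- and then Lemma~\ref{mil1} is applied to $v$ with $m_0=m_1=1$. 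This gives \eqref{mi13} immediately, and the $\varepsilon$-independence of $c_3$ is automatic because the iteration exponents $\gamma^{i-1}$ no longer involve $\varepsilon$ at all. Both proofs are valid; yours is more self-contained and explicitly reveals \emph{why} the constant does not degenerate as $\varepsilon\to 0$ (the cancellation of the $1/\varepsilon$ exponent), whereas the paper's proof avoids all sign bookkeeping by exploiting the subsolution structure and reusing Lemma~\ref{mil1}. One small slip: the iteration factor should be $(c\,2^i)^{1/m_i}=c^{1/m_i}\,2^{i/m_i}=c^{1/m_i}\,2^{-i/|m_i|}$ (both exponents carry the same sign since $m_i<0$), not $c^{1/m_i}2^{i/|m_i|}$; this is only a typo, and your stated conclusion that the accumulated product is $\exp(-C(s,\Gamma)/\varepsilon)$ with $C$ finite is the correct one.
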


\begin{proof}
We let $v=u^{-\varepsilon}$ and observe that by (\ref{\ione}),
the function $v$ satisfies
$$\pa_tv -\div A\na v<0.$$
We can repeat the proof of Lemma \ref{mil1} with $m_1=1$ for $v$ instead of $u$ and then show (\ref{mi13}).
\end{proof}

\subsection{Estimates of $\ln u$}
\begin{lemma}\la {el1}Assume that $u$ i a sufficiently smooth positive solution to equation (\ref{\ione}) and $Q'(z_0,R)=B(x_0,2R)\times ]t_0-R^2,t_0+R^2[\subset Q$. There exist two constants $c_4=c_4(n,\nu)$ and $a^R$ such that
\be\la{e1}|\{z\in Q^+(z_0,R):\,\,\, -\ln u-a^R>s\}|\leq \frac {c_4R^{n+2}}s,\ee\be\la{e2}|\{z\in Q(z_0,R):\,\,\, -\ln u-a^R<-s\}|\leq \frac {c_4R^{n+2}}s.\ee\end{lemma}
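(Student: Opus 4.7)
The plan is to work with the logarithm $v:=-\ln u$ and extract from the equation a parabolic ODE for a spatial average, then apply Moser's level-set trick. Using (\ref{mi1}) and smoothness of $u$, the chain rule gives
$$\partial_t v - \div(A\nabla v) = -A\nabla v\cdot\nabla v \le -\nu\,|\nabla v|^2,$$
since the skew-symmetric part $d$ drops out of the quadratic form. I would then test against $\varphi^2$ for a cutoff $\varphi\in C_0^\infty(B(x_0,2R))$ with $\varphi\equiv 1$ on $B(x_0,R)$ and $|\nabla\varphi|\le C/R$. Repeating the analysis of Lemma \ref{l22}, I split $d=(d-[d]_{x_0,2R}(t))+[d]_{x_0,2R}(t)$: the constant skew-symmetric mean is annihilated thanks to the symmetry of $\partial_i\partial_j\varphi^2$, while the oscillation piece is controlled via Young's inequality and the John--Nirenberg bound $\frac{1}{|B(2R)|}\int_{B(x_0,2R)}|d-[d]_{x_0,2R}|^2\,dx\le C\Gamma^2$. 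Combined with the symmetric part this produces
$$\frac{d}{dt}\int\varphi^2 v\,dx+\frac\nu2\int\varphi^2|\nabla v|^2\,dx\le C(1+\Gamma^2)R^{n-2}.$$
A weighted Poincar\'e inequality for the $\varphi^2$-weighted mean $V(t):=\int\varphi^2 v\,dx/\int\varphi^2\,dx$ then converts this into the parabolic ODE
$$V'(t)+\frac{c_1}{R^{n+2}}\int\varphi^2\bigl(v-V(t)\bigr)^2\,dx\le\frac{\kappa}{R^2},\qquad \kappa:=C(1+\Gamma^2).$$
Dropping the positive term shows that $\Omega(t):=V(t)-\kappa t/R^2$ is monotone nonincreasing, and I take $a^R:=V(t_0)$.

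To establish (\ref{e1}), I would decompose $(t_0,t_0+R^2]$ via stopping times. Set $T_0:=t_0$ and, for $k\ge 1$, $T_k:=\inf\{t\in(T_{k-1},t_0+R^2]:\Omega(T_{k-1})-\Omega(t)\ge s\}$ (with $\inf\emptyset:=t_0+R^2$). On the slab $[T_{k-1},T_k]$ monotonicity gives $V(t)-V(t_0)\in[-ks,\kappa]$, hence $\{v>V(t_0)+s\}\subset\{v-V(t)>ks-\kappa\}$. Chebyshev's inequality bounds the slice measure by $(ks-\kappa)^{-2}\int\varphi^2(v-V)^2\,dx$, while integrating the ODE over $[T_{k-1},T_k]$ telescopes via $\Omega(T_{k-1})-\Omega(T_k)\le s$ to $\int_{T_{k-1}}^{T_k}\int\varphi^2(v-V)^2\,dx\,dt\le R^{n+2}s/c_1$. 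For $s\ge 2\kappa$ the slab contribution is thus at most $CR^{n+2}/(k^2s)$, and summing the convergent series $\sum_{k\ge1}k^{-2}$ yields (\ref{e1}); for $s<2\kappa$ the trivial bound $|Q^+(z_0,R)|\le CR^{n+2}\le C'R^{n+2}/s$ suffices. Estimate (\ref{e2}) is obtained by a symmetric backward-in-time argument: now $\Omega$ nonincreasing forces $\Omega(t)\ge\Omega(t_0)$ for $t<t_0$, so one defines $T_k:=\sup\{t\in[t_0-R^2,T_{k-1}):\Omega(t)-\Omega(T_{k-1})\ge s\}$ and repeats the slab bookkeeping verbatim, with $\{a^R-v>s\}\subset\{V(t)-v>ks-\kappa\}$.

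The main delicacy is precisely this stopping-time iteration: a direct Chebyshev estimate only yields $1/s^2$ decay, and to reach the sharp $1/s$ decay one must simultaneously control the downward drift of $V(t_0)-V(t)$ accumulated across successive slabs (which raises the effective height of the level set) and the telescoping $L^2$-mass of $v-V(t)$ supplied by the ODE on each slab, organizing these into a summable $1/k^2$ series. A secondary technical point, already appearing in the first paragraph, is that $d$ need not belong to any $L^p$ space, so the time-dependent mean $[d]_{x_0,2R}(t)$ must be subtracted \emph{before} John--Nirenberg is invoked, and the skew-symmetric constant-in-$x$ remainder then disappears by the symmetry of second derivatives of the cutoff.
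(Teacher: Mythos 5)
Your argument tracks the paper's proof through the derivation of the differential inequality for the weighted mean $V(t)$: both pass to the logarithm, test the equation against a cutoff $\psi^2$, subtract the spatial mean $[d]_{x_0,2R}(t)$ from $d$ before invoking John--Nirenberg so that the constant skew part is annihilated against the symmetric Hessian $\partial_i\partial_j\psi^2$, and then use a weighted Poincar\'e inequality to obtain $V' + c_1 R^{-(n+2)}\int\varphi^2|v-V|^2\,dx \le \kappa/R^2$ with $\kappa \sim 1+\Gamma^2$. Up to here the two arguments agree in substance.

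The divergence is in how the level-set bound is extracted. The paper (following Moser) integrates the differential inequality
$$W'(t) + c_4^{-1}\,|B_s^+(t)|\,(s-W(t))^2 \le 0$$
directly: since $\tfrac{d}{dt}\tfrac{1}{s-W} = \tfrac{W'}{(s-W)^2}$, the integral of $|B_s^+(t)|$ over $[0,1]$ telescopes to $c_4\bigl(\tfrac{1}{s-W(0)} - \tfrac{1}{s-W(1)}\bigr) \le c_4/s$. Your stopping-time slabbing, where $T_k$ marks the moments at which the drift-corrected mean $\Omega$ has dropped by $s$, Chebyshev at progressively higher thresholds $ks-\kappa$ inside each slab, and summation of $\sum k^{-2}$, is a correct discretization of exactly this telescoping and arrives at the same $1/s$ bound. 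It works, but it is more elaborate than necessary, and your comment that ``a direct Chebyshev estimate only yields $1/s^2$'' slightly misplaces the difficulty: the classical argument does not use Chebyshev at all but rather the pointwise bound $|B_s^+(t)|(s-W)^2 \le \int|w-W|^2$ together with the built-in telescoping of $(s-W)^{-2}\,dW$, which gives $1/s$ in one stroke with no iteration.

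One minor quantitative slip: with your choice $a^R = V(t_0)$, the regime $s < 2\kappa$ is handled by the trivial volume bound, which makes the final constant depend on $\kappa$ and hence on $\Gamma = \|d\|_{L_\infty(BMO)}$, whereas the lemma asserts $c_4 = c_4(n,\nu)$. This is easily repaired by taking $a^R = V(t_0) + \kappa$, which replaces the threshold $ks - \kappa$ by $ks$ uniformly in $s>0$ and removes the $\Gamma$-dependence from $c_4$, pushing it entirely into $a^R$ as the paper does via the shift by $c_5 t$.
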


\begin{proof}
To simplify notation, we shift and scale our variables
in the following way
$$u^R(y,s)=u(x_0+Ry,t_0+R^2s),\qquad A^R(y,s)=A(x_0+Ry,t_0+R^2s)$$
for $(y,s)\in Q'=B(2)\times ]-1,1[$. Since equation (\ref{\ione}) is invariant with respect to this transformation, we may reduce our  considerations to the cylinder $Q'$ and, after proving  our result for this particular case, get all the statements of the lemma with the help of inverse translation and dilatation.
Without ambiguity, in what follows, we drop upper index $R$ in the notation of functions $u^R$ and $A^R$.

So, if we let $v=\ln u$, then by (\ref{\ione})
\be\la{e3}\pa_tv-\div(A\na v)+\na v\cdot a\na v=0\ee
in $Q'$. Take and fix a smooth nonnegative cut-off function $\psi=\psi(x)$ so that
$\psi=1$ in $B$ and $\psi=0$ outside $B(2)$. Multiplying equations (\ref{e3}) by $\psi^2$ and integrating the product in $x$ over $B(2)$ and in $t$ over the interval $]t_1,t_2[$, we find
$$\int\limits_{B(2)}v\psi^2dx\Big|^{t_2}_{t_1}+\int\limits^{t_2}_{t_1}
\int\limits_{B(2)}\na \psi^2\cdot A\na vdx\,dt+\int\limits^{t_2}_{t_1}
\int\limits_{B(2)} \psi^2\na v\cdot a\na vdx\,dt=0$$
and thus
$$\int\limits_{B(2)}v\psi^2dx\Big|^{t_2}_{t_1}+\int\limits^{t_2}_{t_1}
\int\limits_{B(2)} \psi^2\na v\cdot a\na vdx\,dt\leq $$$$\leq c\int\limits^{t_2}_{t_1}
\int\limits_{B(2)}\psi|\na \psi||\na v|(|a|+|d-[d]_{0,2}|)dx\,dt.$$
After application of the Cauchy-Schwartz  inequality, we have the following estimate
\be\la{e4}\int\limits_{B(2)}v\psi^2dx\Big|^{t_2}_{t_1}+\frac \nu 2\int\limits^{t_2}_{t_1}
\int\limits_{B(2)} \psi^2|\na v|^2dx\,dt\leq c( \Gamma)(t_2-t_1).\ee

From this point we essentially repeat arguments of J. Moser in \ci{M1}, see Lemma 3 therein. We do this just for completeness. As it is pointed out in \ci{M1}, we can choose our cut-off function $\psi$
so that the following Poincar\`{e}-type inequality takes place
$$\int\limits_{B(2)}|v(x,t)-V(t)|^2\psi^2(x)dx\leq c\int\limits_{B(2)}|\na v(x,t)|^2
\psi^2(x)dx,$$ where
$$V(t)=
\int\limits_{B(2)}v(x,t)\psi^2(x)dx\Big(\int\limits_{B(2)}\psi^2(x)dx\Big)^{-1}.$$
Making use of this inequality, we can derive from (\ref{e4}) the following relation
$$V(t_2)-V(t_1)+c^{-1}_4\int\limits^{t_2}_{t_1}\int\limits_{B}
|v(x,t)-V(t)|^2dx\,dt\leq c_5(n,\nu,\Gamma)(t_2-t_1)$$
which can be reduced to the  differential form
$$\frac {dV}{dt}(t)+c^{-1}_4\int\limits_{B}
|v(x,t)-V(t)|^2dx\leq c_5.$$
One may  make this inequality homogeneous with help of the shift
$$w(x,t)=v(x,t)-V(0)-c_5t,\qquad W(t)=V(t)-V(0)-c_5t.$$
This give us the inequality
\be\la{e5}\frac {dW}{dt}(t)+c^{-1}_4\int\limits_{B}
|w(x,t)-W(t)|^2dx\leq 0\ee
and the initial condition
\be\la{e6}W(0)=0.\ee

For $0<t<1$ and $s>0$, we introduce the family of sets
$$B^+_s(t)=\{x\in B:\,\,\, w(x,t)>s\,\}.$$
As it follows from (\ref{e5}) and (\ref{e6}),  for those values of parameters
$t$ and $s$, we have $w(\cdot,t)-W(t)\geq s-W(t)>0$ on $B^+_s(t)$ and, hence,
$$\frac{dW}{dt}(t)+c^{-1}_4|B^+_s(t)|(s-W)^2\leq 0$$ or
$$c_4(s-W)^{-2}\frac {d(s-W)}{dt}\geq |B^+_s(t)|$$
The latter identity can be integrated and, as a result,
we find
$$\int\limits^1_0|B^+_s(t)|dt=\{(x,t)\in Q^+:\,\,\,w(x,t)>s\,\}\leq \frac {c_4}s$$
which implies the first estimate (\ref{e1}). Other estimate (\ref{e2}) can be established in the same way.
\end{proof}

\subsection{Harnack inequality}

\begin{theorem}\la{ht1}Let $u$ be a positive sufficiently smooth solution to (\ref{\ione}) in $Q$. Then for any $Q(z_0,R)\subset Q$ we have the inequality
\be\la{h1}\sup_{z\in Q(z_{0R},R/2)}u(z)\leq c_6(n,\nu,s,\Gamma)\inf_{z\in Q(z_0,R/2)}u(z),\ee where $z_{0R}=z_0-(0,R^2/2)$.\end{theorem}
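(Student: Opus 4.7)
The strategy is Moser's classical proof of the parabolic Harnack inequality. Lemmas \ref{mil2} and \ref{mil3} supply the reverse-H\"older $L^{p}\to L^{\infty}$ iteration uniformly in small exponents, Lemma \ref{el1} supplies the ``BMO-type'' logarithmic bound linking positive and negative powers of $u$, and the Bombieri--Giusti crossover lemma glues them together.

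I would first apply Lemma \ref{el1} at the vertex $z^{*}=z_{0R}=(x_{0},t_{0}-R^{2}/2)$ with radius $R/2$ (so that $Q'(z^{*},R/2)\subset Q$ is automatic from $Q(z_{0},R)\subset Q$), producing a scalar $a=a^{R/2}$ such that $-\ln u-a$ has a weak $L^{1}$ upper tail on $Q^{+}(z^{*},R/2)$ and a weak $L^{1}$ lower tail on $Q(z^{*},R/2)=Q(z_{0R},R/2)$. The Harnack inequality \eqref{h1} is then equivalent to the pair of one-sided pointwise bounds
\begin{equation*}
\sup_{Q(z_{0R},R/2)}u\le C\,e^{-a},\qquad \inf_{Q(z_{0},R/2)}u\ge C^{-1}e^{-a},
\end{equation*}
whose ratio cancels the unknown scalar $e^{-a}$ and yields \eqref{h1} with $c_{6}=C^{2}$.

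For the upper bound I would set $v=e^{a}u$ and feed the Bombieri--Giusti lemma with two ingredients: the reverse-H\"older estimate for $v^{\eps}=e^{\eps a}u^{\eps}$ with arbitrarily small $\eps>0$ supplied by Lemma \ref{mil2} (the $e^{\eps a}$ prefactor is harmless because it is spatially constant), and the weak $L^{1}$ bound $|\{\ln v>s\}|_{Q(z_{0R},R/2)}\le cR^{n+2}/s$, which is exactly \eqref{e2} rewritten. Bombieri--Giusti then outputs $\|v\|_{L^{\infty}(Q(z_{0R},R/2))}\le C$. The lower bound is symmetric: set $w=e^{-a}/u$, use Lemma \ref{mil3} (or, after a mild re-centering of Lemma \ref{el1} to the vertex $(x_{0},t_{0}-R^{2}/4)$, its forward-in-time analogue, which follows from the same proof applied to the subsolution $u^{-\eps}$ of \eqref{\ione}) in place of Lemma \ref{mil2}, and use \eqref{e1} in place of \eqref{e2}. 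A secondary geometric annoyance is that the weak $L^{1}$ bound \eqref{e1} lives on $Q^{+}(z^{*},R/2)$ rather than on $Q(z_{0},R/2)$, so after producing the lower bound on $Q^{+}(z^{*},R/2)$ one chains it across finitely many overlapping subcylinders to cover the entire inf cylinder $Q(z_{0},R/2)$.

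The main obstacle is the Bombieri--Giusti step itself, which converts a mere weak $L^{1}$ bound on $\ln u$ into an $L^{\infty}$ bound on $u^{\pm\eps}$. The weak log-bound does not imply any exponential integrability of $u$ on its own, so one cannot simply plug an $L^{q}$ estimate on $u^{\pm\eps}$ into Lemmas \ref{mil2}--\ref{mil3}; instead, the reverse H\"older is iterated with the exponent $p\to 0$, the blow-up of the constant $(r-\varrho)^{-(n+2)/q}$ is telescoped dyadically against the weak $L^{1}$ tail, and the crucial point---already built into the statements of Lemmas \ref{mil2} and \ref{mil3}---is that the reverse H\"older constants are \emph{uniform} as $m_{1},\eps\to 0$, depending only on $n,\nu,s,\Gamma$. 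This uniformity, together with the shape of the prefactor $(r-\varrho)^{-(n+2)/q}$, is exactly what is needed for the telescoping series to converge.
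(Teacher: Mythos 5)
Your proposal is correct and matches the paper's argument in all essentials: the paper likewise combines Lemma~\ref{el1} (the weak-$L^1$ bound on $\ln u$ from the logarithm estimate), Lemma~\ref{mil2} for positive small powers $u^{m_1}$ with $m_1\downarrow 0$, and Lemma~\ref{mil3} for negative powers $u^{-\eps}$, then glues them by the crossover mechanism and cancels the normalizing constant $e^{-a}$ in the ratio. The only cosmetic difference is that the paper carries out the crossover concretely following Moser's~\ci{M2} scheme --- defining $\varphi(r)=\sup_{\widetilde Q(z_0,r)}\ln w$, choosing $m_1$ so that the two terms in the bound $\int w^p\le e^{p\varphi(r)}\frac{2c_4}{\varphi(r)}+e^{\frac p2\varphi(r)}$ balance, and deriving the recurrence $\varphi(\varrho)\le\tfrac34\varphi(r)+\mu_1(r-\varrho)^{-2(n+2)}$ to be iterated --- rather than citing the abstract Bombieri--Giusti lemma, which packages this identical telescoping. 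You also correctly identify the uniformity of the constants in Lemmas~\ref{mil2}--\ref{mil3} as $m_1\to 0$ as the key technical input, and the forward/backward cylinder mismatch, which the paper resolves by its explicit choices $z_0=(0,1/2)$, $z_*=(0,1)$, $\vartheta\in[1/2,1/\sqrt2[$ inside $Q'=B(2)\times\,]-1,1[$ followed by the covering argument from~\ci{M2}.
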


\begin{proof}
Using translation and dilatation  similar to those described in  Section 4, one may consider  our problem  in a canonical domain, say, in $Q'$. Now our aim is to make use of  estimates proved in Sections 3 and 4 plus some iteration technique in order to find a particular version of the Harnack inequality. It can  be extended to the general case of Theorem \ref{ht1} with the help of covering methods.
So, in this section, we follow \ci{M2} with minor changes.

By Lemma \ref{el1}, see (\ref{e2}), we know that
\be \la{h2}|\{z\in Q:\,\,\, -\ln u-a<-s\}|\leq \frac {c_4}s\ee
for some constant $a$. As in \ci{M2}, we introduce the following function
$$\varphi(r)=\sup\limits_{z\in \widetilde{Q}(z_0,r)} \ln w(z)$$
with $z_0=(0,1/2)$, $r\in ]1/2, 1/\sqrt{2}[$, and $w=e^au$.

Now, our aim is to show  that there exist a  constant $\delta>2$ depending only on $n$, $\nu$, $s$, and $\vartheta\in [1/2, 1/\sqrt{2}[$ such that
\be\la{h3} \varphi (\vartheta) \leq \delta. \ee
To this end, we  derive from (\ref{h2})
the  estimate
\be\la{h4}\int\limits_{\widetilde{Q}(z_0,r)}w^pdz \leq  e^{p \varphi(r)}\frac {2c_4}{\varphi(r)}+e^{\frac p2 \varphi(r)}\ee
being  true for any positive $m_1=p/q$ and for any  $r\geq \vartheta$. Let us choose $m_1$ so that terms on the right-hand side of (\ref{h4}) contribute to the sum equally.  This suggests the following value for $m_1$
\be\la{h5}m_1=\frac 1{q\varphi(r)}\ln \frac{\varphi(r)}{2c_4}.\ee
Obviously, there exists a constant $\delta_1>2$ depending only on $s$ and $c_4$
such that if
\be\la{h6}\varphi (\vartheta) >\delta_1,\ee
then $m_1$ defined above belongs to the interval $]0,1/2[$. If not, then $\delta=\delta_1$. If (\ref{h6}) holds,   estimate  (\ref{mi8}) of Lemma \ref{mil2} give us the relation
$$m_1 \varphi(\varrho)\leq \ln\frac {c_2}{(r-\varrho)^\frac {n+2}q}+\frac 1q
\ln \Big(\int\limits_{\widetilde{Q}(z_0,r)}w^pdz\Big)$$ for any $\vartheta\leq \varrho<r$.
Recalling the choice of  $m_1$, we find   from (\ref{h4}) that
$$\int\limits_{\widetilde{Q}(z_0,r)}w^pdz \leq 2e^{\frac p2 \varphi(r)}$$
and thus
$$\varphi(\varrho)\leq \frac 1{m_1}\ln\frac {c_2}{(r-\varrho)^\frac {n+2}q}+\frac 12 \varphi(r).$$
The latter inequality can be rewritten with the help of (\ref{h5}) in the following way
$$\varphi(\varrho)\leq \frac 12 \varphi(r)\Big[\frac {\ln ( {c_2}{(r-\varrho)^{-\frac {n+2}q}})}{m_1\varphi(r)}+1\Big]=\frac 12 \varphi(r)\Big[\frac {\ln c^q_2(r-\varrho)^{- (n+2)}}
{\ln \frac {\varphi(r)}{2c_4}}+1\Big].$$
Then one can consider two cases. In the first case,
$$\frac {\ln c^q_2(r-\varrho)^{- (n+2)}}
{\ln \frac {\varphi(r)}{2c_4}}\leq \frac 12$$
and thus
$$\varphi(\varrho)\leq \frac 34 \varphi(r).$$
In the opposite case,
we have
$$\varphi(\varrho)\leq\varphi(r)\leq \frac {\mu_1(n,\nu,s,\Gamma)}{(r-\varrho)^{2(n+2)}}.$$
Combining both cases,
 we find the following basic inequality
 $$\varphi(\varrho)\leq \frac 34 \varphi(r)+\frac {\mu_1}{(r-\varrho)^{2(n+2)}}$$
 for any $1/2\leq \vartheta\leq\varrho<r\leq 1/\sqrt{2}$.
It can be iterated in the known way, see \ci{M2}, and the result
of these iterations can be expresses in the form
$$\varphi(\vartheta)\leq \delta_2(n,\nu,s,\Gamma).$$ So, (\ref{h3}) is proved
with $\delta=\max\{\delta_1,\delta_2\}$.

Next, let $z_*=(0,1)$ and $v=u^{-1}$. Then as it follows from Lemmata \ref{mil3} and \ref{el1}, see (\ref{mi13}) and (\ref{e1}),
$$\sup\limits_{z\in Q(z_0,\varrho)}v(z)\leq\frac {c_3}{(r-\varrho)^{\frac{n+2)}q}}
\Big(\int\limits_{Q(z_0,r)}v^{ q}(z)dz\Big)^\frac 1q$$
and
$$|\{z\in Q(z_*,1):\,\,\,\ln(e^{-a}v)>s\}|\leq \frac {c_4}s.$$
The same arguments as above show that there exists a constant $\delta_3(n,\nu,s,\Gamma)$ such that
$$\sup_{z\in Q(z_*,\vartheta)}\ln (e^{-a}u^{-1}(z))\leq \delta_3.$$
This estimate, together with (\ref{h3}), implies a particular version of the Harnack inequality
$$\sup\limits_{z\in\widetilde{Q}(z_0,\vartheta)}u(z)\leq e^{\delta\delta_3}\inf
\limits_{z\in Q(z_*,\vartheta)}u(z).$$ The general case can be obtained from the particular case with the help of covering technique, see \ci{M2}, Lemma 4, and translation and dilatation.
\end{proof}

\subsection{Nonsmooth case}


\begin{proof}[Proof of Theorem \ref{nt1}.]
Without loss of generality, we may assume that
\be\la{n2}\na u \in L_{p_0}(Q)\ee
for some $p_0>2$. To provide (\ref{n2}), we can apply Theorem  \ref{t21} and scaling.
Obviously, one can construct smooth approximations of matrices $a$ and $d$ with the following properties:
$$a^{(j)}\to a\qquad \mbox{in}\quad L_p(Q)$$
$$d^{(j)}\to d\qquad \mbox{in}\quad L_p(Q)$$
for any $p>1$ and
$$a^{(j)}\to a\qquad \mbox{a.e. in }\quad Q$$
$$d^{(j)}\to d\qquad \mbox{a.e. in }\quad Q.$$
Moreover, matrices $A^{(j)}=a^{(j)}+d^{(j)}$,  $a^{(j)}$ and $d^{(j)}$ satisfies
conditions  (\ref{i2})--(\ref{i4}) with the same constants  and  $ \|d^{(j)}\|_{L_\infty(BMO)}\leq\|d\|_{L_\infty(BMO)}$.

Then we consider the following initial boundary value problem
$$\pa_tw^{(j)}-\div (A^{(j)}\na w^{(j)})=f^{(j)},$$
$$w^{(j)}|_{\pa'Q}=0,$$
where $\pa'Q$ is a parabolic boundary of $Q$ and
$$f^{(j)}\equiv\pa_tu-\div (A^{(j)}\na u).$$
Our claim is
$$f^{(j)}\to 0\qquad \mbox{in}\quad L_2(-1,0;H^{-1}),$$
where $H^1$ is the completion of smooth compactly supported in $B$ functions
with respect to the norm $\|u\|_{2,B}+\|\na u\|_{2,B}$. Indeed, it is not difficult
to show that
$$\|f^{({j})}\|_{L_2(-1,0;H^{-1})}\leq \Big(\int\limits_Q(|a-a^{(j)}|^2
+|d-d^{(j)}|^2)|\na u|^2dz\Big)^\frac 12.$$
So, by (\ref{n2}), the right hand of the latter inequality goes to zero.
On the other hand, for $w^{(j)}$, we have global energy estimate
$$|w^{(j)}|_{2,Q}\leq c \|f^{({j})}\|_{L_2(-1,0;H^{-1})}$$
which, in turn,  means that
\be\la{n3}|w^{(j)}|_{2,Q}\to 0.\ee

Now, we let $v^{(j)}=u-w^{(j)}$. Obviously, $v^{(j)}$ is a unique solution
to the following initial boundary value problem
$$\pa_tv^{(j)}-\div (A^{(j)}\na v^{(j)})=0,$$
$$(v^{(j)}-u)|_{\pa'Q}=0.$$
We know that $v^{(j)}$ possesses the following global properties
$$\pa_tv^{(j)}\in L_2(-1,0;H^{-1}), \qquad \na v^{(j)}\in L_2(Q)$$
and, moreover, it is nonnegative on the parabolic boundary of $Q$ and smooth inside $Q$ where the equation for $v^{(j)}$ can be reduced to the form
$$\pa_t v^{(j)}- (a^{(j)}_{kl} v^{(j)}_{,l})_{,k} - d^{(j)}_{kl,k}v^{(j)}_{,l}=0.$$
Here, comma in lower indices stands for the differentiation with respect to the corresponding spatial variable and summation over repeated indices running from 1 to $n$ is adopted.
As it was shown
in \ci{LSU}, see Chapter 3, Theorem 7.2, therein, for functions satisfying equation above, the maximum principle holds and thus $v^{(j)}$ remains to be nonnegative everywhere inside $Q$. Obviously function
$v^{(j)}+\frac 1j$ satisfies all the conditions of Theorem \ref{ht1} and, hence,
$$\sup\limits_{z\in Q(z_{0R},R/2)}v^{(j)}(z)\leq c_6\inf\limits_{Q(z_0,R/2)}v^{(j)}(z).$$
Passing to the limit as $j\to \infty$ and taking into account (\ref{n3}), we arrive at (\ref{n1}).
\end{proof}

\subsection{Liouville Theorem}

In this subsection, we assume that $u$ is an ancient suitable weak solution
to equation (\ref{\ione}) which means that it is defined on $Q_-\equiv\mathbb R^n\times ]-\infty, 0[$ and is a suitable weak solution in all parabolic balls $Q(z_0,1)$
with $z_0 =(x_0,t_0)$ for any $x_0\in\mathbb R^n$ and for any $t_0\leq 0$. Since our equation is invariant with respect to translation and usual parabolic dilatation,
such a solutions will be suitable in all parabolic balls of the form
$Q(a)$ for any positive $a$. Now, we shall show its H\"older continuity provided  it is bounded.
\begin{lemma}\la{ll1} Let $u$ be an ancient suitable weak solution
to equation (\ref{\ione}). Then there are two constants $c_7$ and $\al$ which depend only on $n$, $\nu$, $s$, satisfying condition (\ref{29}), and $\Gamma=\|d\|_{L_\infty(BMO)}$ such that
\be\la{l1+}|u(z)-u(z_0)|\leq c_7|z-z_0|_{par}^\al\sup\limits_{z\in Q_-}|u(z)|\ee
for any $z$ and $z_0$ from $Q_-$ with the parabolic distance $|z-z_0|_{par}=|x-x_0|+|t-t_0|^\frac 12$. \end{lemma}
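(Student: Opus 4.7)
The plan is to derive the stated H\"older bound from the parabolic Harnack inequality (Theorem~\ref{nt1}) by the classical oscillation-decay iteration, and then to use the fact that $u$ is ancient to convert the resulting local estimate into the global bound (\ref{l1+}).

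Fix $z_0\in Q_-$ and $R>0$; since $u$ is ancient, it is a suitable weak solution on $Q(z_0,R)$. Put
$$M=\sup_{Q(z_0,R)}u,\qquad m=\inf_{Q(z_0,R)}u,\qquad \omega=M-m,$$
and consider the two non-negative suitable weak solutions $v_-=u-m$ and $v_+=M-u$. Applying Theorem~\ref{nt1} with outer domain $Q(z_0,R)$ and Harnack subcylinder $Q(z_0,R/2)\subset Q(z_0,R)$ yields
$$\sup_{Q(z_{0,R/2},\,R/4)}v_\pm\;\le\;C\,\inf_{Q(z_0,R/4)}v_\pm,\qquad z_{0,R/2}=(x_0,t_0-R^2/8).$$
Since $v_-+v_+\equiv\omega$ pointwise, at least one of the two left-hand sups is $\ge\omega/2$; if this holds for $v_-$ then Harnack forces $\inf_{Q(z_0,R/4)}(u-m)\ge\omega/(2C)$, whence
$$\osc_{Q(z_0,R/4)}u\;\le\;M-m-\omega/(2C)\;=\;(1-1/(2C))\,\omega,$$
and the $v_+$ case is symmetric. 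Writing $\omega(r):=\osc_{Q(z_0,r)}u$ and $\gamma:=1-1/(2C)\in(0,1)$, this reads $\omega(R/4)\le\gamma\,\omega(R)$.

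Dyadic iteration gives $\omega(4^{-k}R)\le\gamma^{k}\omega(R)$, equivalent to
$$\omega(r)\;\le\;\tilde C\,(r/R)^\alpha\,\omega(R),\qquad 0<r\le R,$$
with $\alpha:=\log_4(1/\gamma)>0$ depending only on $n,\nu,s,\Gamma$. Because $u$ is ancient this holds for \emph{every} $z_0\in Q_-$ and \emph{every} $R>0$; bounding $\omega(R)\le 2\sup_{Q_-}|u|$ and setting $R=1$ yields $\omega(r)\le 2\tilde C\,r^\alpha\sup_{Q_-}|u|$ for all $r\le 1$. Given $z,z_0\in Q_-$ with $\rho:=|z-z_0|_{par}\le 1$, after swapping them (if necessary) so that $t\le t_0$, both points lie in $\overline{Q}(z_0,\rho)$, and therefore $|u(z)-u(z_0)|\le\omega(\rho)\le c_7\,\rho^\alpha\sup_{Q_-}|u|$. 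For $\rho\ge 1$ the bound is trivial from $|u(z)-u(z_0)|\le 2\sup_{Q_-}|u|\le 2\rho^\alpha\sup_{Q_-}|u|$, establishing (\ref{l1+}).

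The main technical obstacle is checking that the translates $v_\pm=\pm(u-c)$ are themselves \emph{suitable} weak solutions in the sense of Definition~\ref{id1}: the weak equation (\ref{i6}) transfers immediately because (\ref{par1'}) is linear and constants are solutions, but the local energy inequality (\ref{i7}) for $v_\pm$ must be re-derived by testing $\partial_tu-\div(A\nabla u)=0$ against $v_\pm\varphi^2$ and integrating by parts, the skew-symmetric part $d$ again dropping out through $d\nabla v_\pm\cdot\nabla v_\pm=0$ exactly as in the proof of Lemma~\ref{l22}. A secondary point, already built into the two-sided case split above, is that Theorem~\ref{nt1} only bounds the sup on a \emph{lower} sub-cylinder by the inf on an \emph{upper} one, so one cannot extract oscillation decay from a single favorable pointwise value but must use the identity $v_++v_-=\omega$ to produce a sub-cylinder sup large enough to feed Harnack.
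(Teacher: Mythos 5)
Your proof is correct and follows essentially the same route as the paper: apply the parabolic Harnack inequality to the two nonnegative suitable weak solutions $M_R-u$ and $u-m_R$ to obtain oscillation decay, iterate dyadically to get a local H\"older estimate, and then exploit the fact that $u$ is ancient to send $R\to\infty$. The only small difference is that the paper avoids your case split: it adds the two Harnack inequalities $\frac{1}{c_6}(M_R-u(z))\leq M_R-M_{R/2}$ and $\frac{1}{c_6}(u(z)-m_R)\leq m_{R/2}-m_R$ evaluated at the \emph{same} point $z\in Q(z_{0R},R/2)$, so the $u(z)$ terms cancel and one gets $\osc(z_0,R)-\osc(z_0,R/2)\geq\frac{1}{c_6}\osc(z_0,R)$ directly, without needing to argue about which of the two sups is $\ge\omega/2$.
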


\begin{proof}
We let
$$M_R=\sup\limits_{z\in Q(z_0,R)}u(z),\qquad M_{R/2}=\sup\limits_{z\in Q(z_0,R/2)}u(z),$$
$$m_R=\inf\limits_{z\in Q(z_0,R)}u(z),\qquad m_{R/2}=\inf\limits_{z\in Q(z_0,R/2)}u(z).$$
If we let $v(z)=M_R-u(z)$, then will be a nonnegative suitable weak solution to equation (\ref{\ione}) in parabolic ball $Q(z_0,R)$. By translation and parabolic dilatation, we can derive from Theorem \ref{ht1} the following inequality for $v$
\be\la {l2}\inf\limits_{z\in Q(z_0,R/2)}v(z)=M_R-M_{R/2}\geq \frac 1{c_6}(M_R-u(z))\ee
for all $z\in Q(z_{0R},R/2)$. On the other hand, for the same reason, we may apply Theorem \ref{ht1} to function $w=u(z)-m_R$ and find
\be\la{l3}m_{R/2}-m_R\geq \frac 1{c_6}(u(z)-m_R)\ee
for all $z\in Q(z_{0R},R/2)$.
Adding (\ref{l2}) and (\ref{l3}), we arrive at the inequality
\be\la{l4}\osc(z_0,R)-\osc(z_0,R/2)\geq \frac 1{c_6}\osc(z_0,R),\ee
where $\osc(z_0,R)=M_R-m_R$, or
$$\osc(z_0,R/2)\leq \vartheta\osc(z_0,R)$$
with $\vartheta=1-1/c_6$. After simple iterations, we have the series of the inequalities
$$\osc(z_0,R/2^k)\leq \vartheta^k\osc(z_0,R)$$
which can be reduced to the form
$$\osc(z_0,\varrho)\leq c_7\varrho^\al\vartheta\osc(z_0,R).$$
The latter is true for $z_0\in Q_-$  and  all $0<\varrho<R<+\infty$ and certainly implies (\ref{l1+}).
\end{proof}

\noindent
\begin{proof}[Proof of Theorem \ref{it2}.]
We let $M=\sup_{z\in Q_-}|u(z)|$. If we scaled  our solution $u$ and matrix $A$
so that $$u^R(y,s)=u(Ry,R^2s),\qquad A^R(y,s)=A(Ry,R^2y),$$ then as it is easy to see $u^R$ and $A^R$ satisfy the equation (\ref{\ione}) in $Q_-$ and
$$\nu=\nu^R,\qquad M=M^R=\sup\limits_{z\in Q_-}|u^R(z)|,\qquad \Gamma=\Gamma^R=
\|d^R\|_{L_\infty(BMO)}.$$
By Lemma \ref{ll1}, we have
$$|u^R(e)-u^R(0)|\leq c_7|e|_{par}^\al M^R$$
for any $e=(y,s)\in Q_-$. Making inverse scaling in the latter inequality,
we find
$$|u(z)-u(0)|\leq c_7 |z|_{par}^\al\frac 1{R^\al}M$$
for any $z\in Q_-$ and for any $R>0$. By arbitrariness of $R$, we show that
$u$ must be a constant.
\end{proof}

\section{An elementary proof of an elliptic Liouville theorem in 2D and a counterexample}

In this section we explore the Liouville theorem for \eqref{ell1} in two dimensions. Assuming that the divergence-free vector field $b$ is in the space $(BMO)^{-1}$, we provide an elementary, short, and self contained proof showing that bounded subsolutions (and supersolutions) are constant.  Afterwards, we construct a counterexample to such a Liouville theorem for a divergence-free vector field whose stream function is bounded by $\ln |x| \ln \ln |x|$ ($\notin BMO$) for large $|x|$. This construction shows that the hypothesis $b \in (BMO)^{-1}$ is quite sharp.

If $b$ is a smooth divergence-free vector field on $\bbR^2$, then it has a {\it stream function} $H:\bbR^2\to\bbR$ as in \eqref{stream2} and we have \eqref{stream3}.
This relationship between  $b$ and $A$ allows us to introduce
the notion of a weak solution for very singular drifts.

\begin{definition}\label{definition of weak solutions} Let $b$ be a divergence-free drift from $BMO^{-1}(\mathbb R^2)$, that is,
$H\in BMO(\mathbb R^2)$. We say that a function $u\in H^1_{{\rm loc}}(\mathbb R^2)$  is a weak subsolution
to \eqref{ell1} in $\bbR^2$, that is, a weak solution to
\begin{equation}\label{equation with drift}-\Delta u+b\cdot \nabla u \leq 0\end{equation}
 if for any nonnegative test function $v\in C^\infty_0(\mathbb R^2)$ we have
\begin{equation}\label{1.1}\int\limits_{\mathbb R^2} (A\nabla u)\cdot \nabla v \ dx \leq 0.\end{equation}
Weak supersolutions are defined by reversing both inequalities.
\end{definition}

We note that, as mentioned in the introduction, the bilinear form in \eqref{1.1} extends continuously to compactly supported $v\in \dot{H}^1$  and hence in \eqref{1.1}  one can equivalently consider any nonnegative compactly supported $v\in H^1(\bbR^2)$.

\begin{theorem} \lbl{T.1.1}
Assume that  $b\in BMO^{-1}(\mathbb R^2)$ is divergence-free, and let $u$ be a weak subsolution to (\ref{ell1}) in $\bbR^2$.
If $u$ is bounded then $u$ is a constant.
\end{theorem}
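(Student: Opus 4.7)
My plan is to prove this by a Caccioppoli-type energy estimate combined with the $\calH^1$-$BMO$ duality in its div-curl form (Coifman--Lions--Meyer--Semmes).

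First I would reduce to $u\ge 0$ by replacing $u$ with $u+\|u\|_\infty$; this preserves the weak subsolution property since constants solve the equation. Setting $M=\|u\|_\infty$, I test $\int (A\nabla u)\cdot\nabla v\,dx\le 0$ with $v=\eta^2 u\ge 0$ for a nonnegative cutoff $\eta\in C^\infty_c(\bbR^2)$. The diagonal contribution of the skew part $d$ vanishes since $d\nabla u\cdot\nabla u=0$, while the off-diagonal part may be rewritten using $d\nabla u\cdot\nabla\eta=-H\det(\nabla u,\nabla\eta)$, $u\det(\nabla u,\nabla\eta)=\tfrac12\det(\nabla u^2,\nabla\eta)$, and $2\eta\det(\cdot,\nabla\eta)=\det(\cdot,\nabla\eta^2)$. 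After using Young's inequality to absorb the standard cross term into the left side, this yields
\[
\int\eta^2|\nabla u|^2\,dx \lesssim M^2\int|\nabla\eta|^2\,dx + \left|\int H\det(\nabla u^2,\nabla\eta^2)\,dx\right|.
\]

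The key observation is that the Jacobian $\det(\nabla u^2,\nabla\eta^2)$ is a div-curl product in $\bbR^2$ (since $J\nabla\eta^2$ is divergence-free for $J$ a $90^\circ$ rotation), so by CLMS it lies in the Hardy space $\calH^1(\bbR^2)$, the predual of $BMO$. Localizing $u^2$ by an auxiliary cutoff $\chi$ equal to $1$ on $\operatorname{supp}\nabla\eta^2$ (so that the Jacobian identity is preserved but $\chi u^2\in\dot H^1(\bbR^2)$) gives
\[
\left|\int H\det(\nabla u^2,\nabla\eta^2)\,dx\right|\le C\|H\|_{BMO}\|\nabla(\chi u^2)\|_{L^2}\|\nabla\eta^2\|_{L^2},
\]
and since $u$ is bounded, $\|\nabla(\chi u^2)\|_{L^2}\lesssim M^2\|\nabla\chi\|_{L^2}+M\|\nabla u\|_{L^2(\operatorname{supp}\chi)}$.

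The final step is a two-scale bootstrap for $E(R):=\int_{B_R}|\nabla u|^2\,dx$. Taking $\eta$ a standard cutoff equal to $1$ on $B_R$ and vanishing outside $B_{2R}$, both $\int|\nabla\eta|^2$ and $\|\nabla\eta^2\|_{L^2}$ are $O(1)$ in two dimensions, so absorbing the annular piece of $E$ on the left by Young's inequality yields the scale-invariant uniform a priori bound $E(R)\le C(M,\|H\|_{BMO})$ for every $R>0$. Rerunning the same calculation with a logarithmic cutoff $\eta_R$ ($\eta_R=1$ on $B_R$, vanishing outside $B_{R^2}$, $\int|\nabla\eta_R|^2\le C/\log R$) produces
\[
E(R)\le \frac{K(M,\|H\|_{BMO})}{\log R}+\tfrac14\, E(R^2).
\]
Passing to $\limsup_{R\to\infty}$, which is finite by the uniform bound, gives $\limsup_R E(R)\le\tfrac14\limsup_R E(R)$, hence $\limsup_R E(R)=0$. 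Since $E$ is monotone nondecreasing, $E\equiv 0$, so $\nabla u=0$ almost everywhere and $u$ is constant. The supersolution case follows by applying the result to $-u$.

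The two main difficulties are making the $\calH^1$-$BMO$ pairing rigorous for $u^2$, which is only locally in $H^1$ (handled by the localizing cutoff $\chi$, chosen so that the Jacobian identity is preserved), and closing the bootstrap: both cutoff families are genuinely needed, since the uniform a priori bound comes only from the standard cutoff, whereas the $1/\log R$ decay that forces $E$ to vanish arises only from the logarithmic one.
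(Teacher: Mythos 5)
Your approach is genuinely different from the paper's: you use the Coifman--Lions--Meyer--Semmes $\calH^1$--$BMO$ duality to estimate the skew-symmetric contribution, whereas the paper subtracts the mean $\bar d$ over $B(R)$ and estimates $\int u\eta\,(d-\bar d)\nabla u\cdot\nabla\eta$ directly by H\"older together with the John--Nirenberg bound $\int_{B(R)}|d-\bar d|^4\le CR^2\|d\|^4_{BMO}$. Your route is cleaner conceptually and ties the proof to the observation already made in the introduction that Jacobians of $\dot H^1$ pairs lie in $\calH^1$.

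There is, however, a genuine gap in the bootstrap. To invoke CLMS you must replace $u^2$ by $\chi u^2$ with $\chi\equiv 1$ on $\operatorname{supp}\nabla\eta^2$; since $\operatorname{supp}\nabla\eta^2$ is precisely the transition annulus on which $\eta$ decreases from $1$ to $0$, the cutoff $\chi$ necessarily lives where $\eta$ is small (and $\operatorname{supp}\chi$ must extend slightly past $\operatorname{supp}\eta$). Consequently the term $M\|\nabla u\|_{L^2(\operatorname{supp}\chi)}$ produced by $\|\nabla(\chi u^2)\|_{L^2}$ is an unweighted Dirichlet integral over a region on which $\eta^2$ is not bounded below, so it cannot be ``absorbed on the left'' into $\int\eta^2|\nabla u|^2$. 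After Young's inequality you are left with an estimate of the shape
\[
E(R)\ \lesssim\ C(M,\|H\|_{BMO})\ +\ \epsilon\, E(cR)\qquad (c>1),
\]
which does not close without an a priori growth bound on $E(\rho)=\int_{B(\rho)}|\nabla u|^2$ (for instance, polynomial growth of a fixed order, which you do not have at this stage). Thus the asserted uniform bound $E(R)\le C(M,\|H\|_{BMO})$ is unproved, and since the second (logarithmic) pass requires that uniform bound to make sense of $\limsup_R E(R)$, the whole argument is circular. If you could establish $\nabla u\in L^2(\bbR^2)$ globally by some other means, your final step with the logarithmic cutoff would indeed finish the proof; but the standard-cutoff step you propose does not deliver that.
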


{\it Remark.}
If the drift $b$ is not too irregular, for example, $b\in L_{2,{\rm loc}}(\mathbb R^2)$, then distributional solutions to (\ref{equation with drift}) can be defined for $u\in  L_{2,{\rm loc}}(\mathbb R^2)$. In this case, bounded solutions are  in $H^1_{\rm loc}(\mathbb R^2)$ and satisfy (\ref{1.1}) automatically, as we will show in the next section.
\medskip

The proof of Theorem \ref{T.1.1} is an immediate consequence of the following lemma.

\begin{lemma}
Let $u$ be a bounded  weak subsolution of
\beq \lbl{e:1}
- {\rm div}  (A \nabla u) \leq 0 \qquad \text{in } \mathbb R^2
\eeq
where $A(x)=a(x)+d(x)$ with $a$ symmetric and $d$ skew symmetric.
Assume that there are $\lambda,\Lambda>0$ such that for any $x,\xi\in\bbR^2$  we have
\begin{align}
(a(x) \xi)\cdot \xi &\geq \lambda  |\xi|^2, \label{e:l-ellipticity}\\ 
||a||_{L_\infty} &\leq \Lambda,  \label{e:l-symmetric-part-in-Linf}\\ 
|| d||_{BMO} &\leq \Lambda.  \label{e:l-skew-part-in-Lp} 
\end{align}
Then $u$ is constant.
\end{lemma}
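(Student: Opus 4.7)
The plan is to combine two Caccioppoli-type energy estimates, obtained by testing the subsolution inequality with $v=\eta^{2}u$ for two different cutoffs. Since $-\operatorname{div}(A\nabla u)\leq 0$ is preserved under $u\mapsto u+\mathrm{const}$, I may first assume $u\geq 1$, so that $v=\eta^{2}u\geq 0$ is a valid test function. The key 2D identity is that, writing $d_{12}=h$, the skew part gives $d\nabla u\cdot\nabla v=-h\,\frac{\partial(u,v)}{\partial(x_{1},x_{2})}$, a Jacobian determinant. Expanding $v=\eta^{2}u$ and using $d\nabla u\cdot\nabla u=0$ converts the drift contribution into $-\tfrac12\int h\,\frac{\partial(u^{2},\eta^{2})}{\partial(x_{1},x_{2})}\,dx$. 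For the first estimate, take $\eta=\eta_{R}$ a standard bump on $B_{2R}$ with $\eta_{R}\equiv 1$ on $B_{R}$ and $|\nabla\eta_{R}|\leq 2/R$. Young's inequality tames the symmetric cross term and, thanks to the 2D scaling $\int|\nabla\eta_{R}|^{2}\leq C$, contributes only $C\|u\|_{\infty}^{2}$. The integral $\int\frac{\partial(u^{2},\eta_{R}^{2})}{\partial(x_{1},x_{2})}\,dx$ vanishes, since the integrand is the divergence of a compactly supported vector field, so $h$ may be replaced by $h-[h]_{B_{2R}}$. Cauchy--Schwarz together with the pointwise bound $\bigl|\frac{\partial(u^{2},\eta_{R}^{2})}{\partial(x_{1},x_{2})}\bigr|\leq C\|u\|_{\infty}\eta_{R}|\nabla u|/R$ and the John--Nirenberg estimate $\|h-[h]_{B_{2R}}\|_{L^{2}(B_{2R})}\leq C\|d\|_{BMO}R$ combine so that the factors of $R$ cancel exactly; a final Young absorbs $\int\eta_{R}^{2}|\nabla u|^{2}$ into the left-hand side. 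This yields the uniform a priori bound
\[
\int_{B_{R}}|\nabla u|^{2}\,dx\;\leq\; C\bigl(\lambda,\Lambda,\|d\|_{BMO},\|u\|_{\infty}\bigr),\qquad R\geq 1.
\]

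Next I would rerun the test with the logarithmic cutoff $\phi_{R}\equiv 1$ on $B_{R}$, $\phi_{R}\equiv 0$ off $B_{R^{2}}$, and linear in $\log|x|$ on the annulus, which satisfies $\|\nabla\phi_{R}\|_{L^{2}}^{2}=O(1/\log R)$. The John--Nirenberg trick of the first step fails here because $\|\nabla\phi_{R}\|_{L^{\infty}}$ is not small. Instead I would invoke the Coifman--Lions--Meyer--Semmes theorem (cited in the introduction): after localizing $u^{2}$ via a bump $\psi\equiv 1$ on $B_{R^{2}}$, $\operatorname{supp}\psi\subset B_{2R^{2}}$---which does not alter the relevant Jacobian since $\nabla\phi_{R}^{2}$ is supported in $B_{R^{2}}$---one obtains
\[
\Bigl\|\frac{\partial(\psi u^{2},\phi_{R}^{2})}{\partial(x_{1},x_{2})}\Bigr\|_{\mathcal{H}^{1}(\bbR^{2})}\;\leq\; C\,\|\nabla(\psi u^{2})\|_{L^{2}}\,\|\nabla\phi_{R}^{2}\|_{L^{2}}\;\leq\;\frac{C}{\sqrt{\log R}},
\]
where the first factor is $O(1)$ by the previous step (since $\int_{B_{2R^{2}}}|\nabla u|^{2}$ is uniformly bounded) and the second is $O(1/\sqrt{\log R})$. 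The $\mathcal{H}^{1}$--$BMO$ duality then gives $\bigl|\int h\,J\,dx\bigr|\leq C\|d\|_{BMO}/\sqrt{\log R}$, while the symmetric cross term contributes $O(\|u\|_{\infty}^{2}/\log R)$. Consequently
\[
\int_{B_{R}}|\nabla u|^{2}\,dx\;\leq\;\int\phi_{R}^{2}|\nabla u|^{2}\,dx\;\xrightarrow[R\to\infty]{}\;0,
\]
and since the left-hand side is monotone in $R$, this forces $\int_{B_{R_{0}}}|\nabla u|^{2}=0$ for every $R_{0}>0$; hence $\nabla u\equiv 0$ and $u$ is a constant.

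The main obstacle is the division of labor between the two cutoffs: a single cutoff cannot do both jobs. The first step needs $\|\nabla\eta_{R}\|_{L^{\infty}}\sim 1/R$ so that the factors of $R$ from John--Nirenberg cancel, which rules out the log cutoff; the second step needs $\|\nabla\phi_{R}\|_{L^{2}}\to 0$ in order to drive the right-hand side to zero, which rules out a standard bump. The two are stitched together precisely by the a priori bound from the first step, which keeps $\|\nabla(\psi u^{2})\|_{L^{2}}$ bounded in $R$ and thereby makes the CLMS estimate in the second step effective.
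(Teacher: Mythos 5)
Your proof is correct, but it is a genuinely different route from the paper's. The paper uses a \emph{single} logarithmic cutoff (equal to $1$ on $B(1)$, vanishing outside $B(R)$) and controls the skew-symmetric contribution by a direct three-factor H\"older split $(d-\bar d)\in L^{4}$, $u|\nabla\eta|\in L^{4}$, $\eta|\nabla u|\in L^{2}$, using the $L^{4}$-form of the John--Nirenberg inequality, $\int_{B(R)}|d-\bar d|^{4}\le CR^{2}$, to absorb the BMO contribution; it never touches the Hardy-space machinery. You instead run a two-pass argument: a standard bump cutoff with $\|\nabla\eta_R\|_\infty\sim 1/R$ first produces the \emph{a priori} bound $\int_{B_R}|\nabla u|^2\le C$ uniformly in $R$ via the $L^{2}$ John--Nirenberg estimate $\|d-\bar d\|_{L^{2}(B_{2R})}\le C\Lambda R$; this uniform bound is then fed into the Coifman--Lions--Meyer--Semmes estimate $\|\partial(\psi u^{2},\phi_{R}^{2})/\partial(x_1,x_2)\|_{\mathcal H^{1}}\le C\|\nabla(\psi u^2)\|_{L^2}\|\nabla\phi_R^2\|_{L^2}$ together with $\mathcal H^{1}$--$BMO$ duality, and the logarithmic cutoff then drives the energy to zero. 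The paper's route is more elementary (only H\"older and John--Nirenberg), while yours is more conceptual — it directly invokes the compensated-compactness fact quoted in the paper's introduction as the very reason the bilinear form extends to BMO coefficients, and it isolates the uniform $\dot H^1$ bound as a standalone lemma.

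One small remark on your closing paragraph: the claim that ``a single cutoff cannot do both jobs'' is stronger than warranted. Your reasoning is that a bump cutoff is needed for the $R$-factors from John--Nirenberg to cancel, and a log cutoff is needed to send the symmetric contribution to zero; but in fact the paper's single log cutoff does handle both. The scaling \emph{does} balance with a single log cutoff provided the H\"older exponents are distributed appropriately — e.g., $(d-\bar d)\in L^{p}(B_R)$ gives $R^{2/p}$, $u|\nabla\eta|\in L^{2}$ gives $(\log R)^{-1/2}$, and $\eta|\nabla u|\in L^{s}$ with $\tfrac1p+\tfrac1s=\tfrac12$ costs $R^{2/s-1}$ after comparing with the $L^2$ norm on the support; the powers of $R$ cancel exactly and the $(\log R)^{-1/2}$ survives. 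So your two-pass structure is a valid but not a necessary device; it is a clean alternative rather than an obstruction being overcome.
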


{\it Remark.}
Note that we actually prove a Liouville theorem for bounded subsolutions. This is only possible in two dimensions. In higher dimensions one needs $u$ to be a solution in order to show that it is constant even in the case of the Laplace equation.

\begin{proof}
Without loss of generality we can assume that $u$ is nonnegative (otherwise we can add a constant).
Let $\eta$ be the  test function
\[\eta(x) = \begin{cases}
1 & \text{if } |x| \leq 1 \\
1 - \frac{\log |x|}{\log R} & \text{if } 1 \leq |x| \leq R \\
0 & \text{if } |x| > R
\end{cases}\]

We take $v=u \eta^2$ in \eqref{1.1} to obtain
\begin{equation} \label{e:c1}
\begin{aligned}
0 &\geq \int (A \nabla u) \cdot \nabla(u \eta^2) \dd x \\
&\geq \int \eta^2(a\nabla u) \cdot \nabla u  \dd x + 2\int u\eta(A \nabla u) \cdot \nabla \eta  \dd x\\
&\geq \lambda \int |\grad u|^2 \eta^2 \dd x + 2\int u\eta(A \nabla u) \cdot \nabla \eta  \dd x
\end{aligned}
\end{equation}
(all integrals are over $\mathbb R^2$ unless otherwise indicated).
Therefore we have
\begin{equation} \label{e:d0}
\lambda \int |\grad u|^2 \eta^2 \dd x \leq 2 \left|\int u\eta(A \nabla u) \cdot \nabla \eta  \dd x \right|
\end{equation}

We need to estimate the second term.
For the symmetric part of $A$, we have
\begin{align}
\left| \int u\eta(a \nabla u) \cdot \nabla \eta  \dd x\right| &\leq \Lambda || u \grad \eta ||_{L_2} || \eta \grad u||_{L_2} \notag\\
& \leq \frac \lambda 8 || \eta \grad u||_{L_2}^2 + C ||\grad \eta ||_{L_2}^2 \notag \\
&\leq \frac \lambda 8 || \eta \grad u||_{L_2}^2 + \frac C {\log R} \label{e:d1}
\end{align}
for a constant $C$ depending only on $||u||_{L_\infty}$, $\Lambda$, and $\lambda$.

Let $\bar k$ be the average of $k$ in $B(R)$, the disk of radius $R$ centered at the origin.  It is easy to check that
\begin{align*}
\int u\eta (\bar  d \nabla  u)\cdot \nabla   \eta  \dd x &=0
\end{align*}

Now we estimate the contribution to the variable skew-symmetric part of the coefficients using H\"older inequality:
\begin{align}
\left| \int u\eta(d \nabla u) \cdot \nabla \eta  \dd x\right|  &\leq
C \left( \int\limits_{B(R)} |d -\bar d|^4 \right)^{1/4} \left( \int u^4 |\grad \eta|^4 \right)^{1/4} \left(\int \eta^2 |\grad u|^2 \right)^{1/2} \notag \\
&\leq \frac 14 \int \eta^2 |\grad u|^2 \dd x + C \left( \int\limits_{B(R)} |d - \bar d|^4 \right)^{1/2} \left( \int u^4 |\grad \eta|^4 \right)^{1/2} \label{e:c2}
\end{align}
Since $d$ is a BMO function, we have
\[ \int\limits_{B(R)} |d - \bar d|^4 \dd x\leq C R^2 \]
and by direct computation using that $u$ is bounded,
\[ \int u^4 |\grad \eta|^4 \dd x\leq \frac C {R^2 (\log R)^4} \]
Therefore, \eqref{e:c2} gives
\begin{equation} \label{e:d2}
\left| \int u\eta (d \nabla u)\cdot \nabla  \eta   \dd x \right| \leq \frac 14 \int \eta^2 |\grad u|^2 \dd x+ \frac C {(\log R)^2}
\end{equation}
Adding \eqref{e:d1} and \eqref{e:d2}, we estimate the right hand side of \eqref{e:d0} to obtain
\[ \int\limits_{B(1)} |\grad u|^2 \dd x \leq \int\limits_{\mathbb R^2} |\grad u|^2 \eta^2 \dd x \leq \frac{C}{\log R} + \frac C {(\log R)^2} \]
for a constant $C$ independent of $R$. We conclude the proof by taking $R \to \infty$.
\end{proof}

\begin{proof}[Proof of Theorem \ref{T.1.2}]
Let $h:\bbR^+_0\to\bbR^+_0$ be such that $h(s)=e^{1-e}s$ for $s\in[0,e^e]$ and $h(s)= \ln s\ln\ln s$ for $s\ge e^e$. For $x=(x_1,x_2)$ define $\hat x \equiv \min\{|x_1|,|x_2|\}$ and
\beq \lbl{1.2}
\til H(x) \equiv C \sgn(x_1x_2) h(\hat x)
\eeq
with $C$ large. If now $H\equiv \eta*\til H$ for some radially symmetric smooth mollifier $\eta$ supported on the unit disc and $b\equiv \nabla^\perp H$, then the hypotheses of the theorem are satisfied. Moreover,  if $K^\pm\equiv\{x\,\big|\, \pm x_2\ge |x_1|+2 \}$, then for some $c>0$ independent of $C$ we have
\beq \lbl{1.3}
b_1(x)  = 0 \qquad \text{and} \qquad \sgn(x_2) b_2(x)  \ge cCh' (\hat x) \qquad \text{if $x\in K^+\cup K^-$.}
\eeq

Let $B_t$ be the 2-dimensional Brownian motion with $B_0=0$, defined on some probability space $(\Omega, \calF,\bbP)$, and for $x\in\bbR^2$ let $X_t^x=X_t^x(\omega)$ be the stochastic process with $X_0^x=x$ and satisfying the SDE
\[
dX_t^x = b(X_t^x)dt + \sqrt 2 dB_t.
\]
It is then well known (see, e.g., \cite{Oks}) that if $v$ solves
\[
\partial_tv + b\cdot\nabla v - \Delta v = 0
\]
on $\bbR^2$ with $v(0,x)=v_0(x)$, then
\beq \lbl{1.4}
v(t,x) = \bbE(v_0(X_t^x)).
\eeq

We let $v_0=2\chi_{\{x\,|\, x_2>0\}}$. Clearly $v(t,x)\in[0,2]$ by \eqref{1.4}, and $(b_1(x_1,-x_2),b_2(x_1,-x_2))=(b_1(x_1,x_2),-b_2(x_1,x_2))$ gives
\beq \lbl{1.5}
v(t,x_1,0)=1 \qquad \text{for all $(t,x_1)$.}
\eeq
So $\partial_tv(t,x_1,0)=0$ and obviously $\sgn(x_2)\partial_tv(0,x_1,x_2)\le 0$ for $x_2\neq 0$ because of $v(t,x)\in[0,2]$ and the choice of $v_0$. This and the maximum principle for $\partial _tv$ give $\sgn(\partial_tv(t,x))= -\sgn(x_2)$ and so there exists $u(x)\equiv \lim_{t\to\infty} v(t,x)$. Parabolic regularity shows that $u$ is a (bounded) solution of \eqref{ell1}.

Let $A_{t,x}\equiv \{\omega :\, (X_s^x(\omega))_2\neq 0 \text{ for all $s\in[0,t]$}  \}$ and $A_x=\bigcup_{t>0} A_{t,x}$. Then \eqref{1.4}, \eqref{1.5}, and the strong Markov property for $X_t^x$ imply
\begin{align*}
v(t,x) & = (1+\sgn(x_2))\bbP( A_{t,x} ) + 1-\bbP( A_{t,x} ),
\\  u(x) & =(1+\sgn(x_2))\bbP( A_{x} ) + 1-\bbP( A_{x} ).
\end{align*}
We will now show that $\bbP( A_{(0,4)} ) = \bbP( A_{(0,-4)} ) >0$ (the equality holds by symmetry), which implies $u(0,4)>1>u(0,-4)$.

The law of iterated logarithm (see, e.g., \cite{Oks}) implies that $\bbP(A)>0$ for
\[
A\equiv\{\omega : \, |B_t|<(3t\ln\ln t)^{1/2} \text{ for all $t\ge e^e$ and $|B_t|< 1$ for all $t\in[0,e^e]$} \}.
\]
For any $\omega\in A$, let $t>0$ be the first time such that $X_t^{(0,4)}(\omega) \in\partial K^+$. Then clearly $t\ge e^e$ and so
\beq \lbl{1.6}
|(X_s^{(0,4)})_1| = |\sqrt 2 (B_s)_1| \le (6t\ln\ln t)^{1/2}
\eeq
for all $s\in[0,t]$. Since $X_s^{(0,4)}\in K^+$ for these $s$, we have $\hat X_s^{(0,4)}=|(X_s^{(0,4)})_1|$ and \eqref{1.3} gives
\[
b_1(X_s^{(0,4)})=0 \qquad \text{and} \qquad b_2(X_s^{(0,4)})\ge cC \frac{\ln\ln (6t\ln\ln t)^{1/2} +1} {(6t\ln\ln t)^{1/2}} \ge cC\left( \frac{\ln\ln t}{6t}\right)^{1/2}
\]
for  $s\in[0,t]$. This means
\[
(X_t^{(0,4)})_2 \ge  cC \left( \frac{\ln\ln t}{6t} \right)^{1/2}t + \sqrt 2 (B_t)_2 +4 \ge \left(\frac{cC}{\sqrt 6} - \sqrt 6 \right)  (t\ln\ln t)^{1/2}.
\]
If we choose $C\ge 18c^{-1}$, then this and \eqref{1.6} give
\[
(X_t^{(0,4)})_2 \ge 2(6t\ln\ln t)^{1/2} \ge (6t\ln\ln t)^{1/2} + (6e^e)^{1/2} \ge |(X_s^{(0,4)})_1| + 9,
\]
contradicting $X_t^{(0,4)}\in\partial K^+$. Therefore $X_t^{(0,4)}(\omega) \in K^+$ for all $t\ge 0$ and $\omega \in A$. This means that $A\subseteq A_{(0,4)}$ and so $0<\bbP(A_{(0,4)})=\bbP(A_{(0,-4)})$. Hence $u(0,4)>1>u(0,-4)$ and the result follows.
\end{proof}


\section{
On a modulus of continuity in 2D and a counterexample in 3D}

In this section we prove that distributional solutions of \eqref{ell1}
with a divergence-free $b$ in a two dimensional domain $\Omega \subset \mathbb R^2$ are continuous with a logarithmic modulus of continuity that we estimate explicitly. The modulus of continuity depends on a local bound for the $H^1$ norm of $u$ and it does not essentially depend on any quantity associated with the vector field $b$. If $b \in L_{1,loc}$, then for suitable $u$ (see Theorem \ref{limit}) we can estimate the modulus of continuity in terms of the $L_\infty$ norm of $u$ instead of $H^1$ thanks to a local energy inequality. Because of the low regularity assumed for the vector field $b$, the a priori estimates are hard to extend to distributional solutions and this presents some technical difficulties that are explained below.
The estimate is a version of the classical result that functions in the border-line Sobolev spaces
which satisfy the maximum principle\footnote{Sometimes the terminology
 ``monotone in the sense of Lebesgue" is used in this context,  see e.\ g.\ \cite{Morrey}.} are continuous, with logarithmic modulus
of continuity.

We also show in this section that the same type of regularity result does not hold in three dimensions. Indeed, we construct an example of a function $u\in L_\infty(B)\cap H^1(B)$ (recall that $B=B(0,1)$ is the unit ball) and a vector field $b \in L_1(B)$, such that $u$ solves \eqref{ell1} in the distributional sense and is discontinuous at the origin.



\begin{proposition} \label{p:h1-from-linfty} Assume that the drift $b\in L_{2,{\rm loc}}(\Omega)$ is divergence-free
and
$u\in L_\infty(\Omega)$ solves \eqref{ell1} in the distributional sense (that is,
\begin{equation}\label{weak-sol}
\int\limits_\Omega b\cdot \nabla v dx = 0= \int\limits_\Omega(u\Delta v+bu\cdot \nabla v)dx
\end{equation}
for all $v\in  C^\infty_0(\Omega)$).
Then $u\in H^1_{\rm loc}(\Omega)$
and if $B(x_0,r) \Subset \Omega$, there is a constant $C$ depending only on $r$
such that
\[||\grad u||_{L_2(B{(x_0,r/2))} }\leq C \left(1+||b||_{L_1(B(x_0,r))}\right)^{1/2} ||u||_{L_\infty(B(x_0,r))}\]
\end{proposition}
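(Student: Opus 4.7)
The approach is to first upgrade $u$ from $L_\infty$ to $H^1_\loc(\Omega)$, and then to run a standard Caccioppoli-type energy argument on a cutoff. The decisive feature is that the divergence-free structure of $b$ lets us replace the $L_2$ norm of $b$ (which one would normally expect from such an estimate) by its $L_1$ norm.

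For the first step, combining \eqref{weak-sol} with $\dv b=0$ yields $-\Delta u + \dv(bu)=0$ in the sense of distributions on $\Omega$. Since $bu\in L_{2,\loc}(\Omega)$ (because $b\in L_{2,\loc}$ and $u\in L_\infty$), the right-hand side lies in $H^{-1}_\loc(\Omega)$, and standard interior regularity for the Laplacian gives $u\in H^1_\loc(\Omega)$. Once this is in hand, a density argument (using that $C_0^\infty$ is dense in compactly supported $H^1_0$ and that both the terms below are continuous in $v$ on this space) extends \eqref{weak-sol} to
$$\int_\Omega \nabla u \cdot \nabla v \, dx + \int_\Omega (b\cdot \nabla u)\,v\,dx = 0$$
for every compactly supported $v\in H^1_0(\Omega)$.

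For the quantitative bound, I would take a standard cutoff $\varphi\in C_0^\infty(B(x_0,r))$ with $\varphi\equiv 1$ on $B(x_0,r/2)$ and $|\nabla\varphi|\le C/r$, and test the identity above with $v=u\varphi^2\in H^1_0(B(x_0,r))$. The only term that is not manifestly harmless, $\int u\varphi^2\,b\cdot\nabla u\,dx$, is handled by a second use of $\dv b=0$:
$$\int u\varphi^2\,b\cdot\nabla u\,dx=\tfrac 12\int \varphi^2\,b\cdot\nabla(u^2)\,dx=-\int u^2\varphi\,b\cdot\nabla\varphi\,dx.$$
The resulting Caccioppoli identity
$$\int \varphi^2|\nabla u|^2\,dx=-2\int u\varphi\,\nabla u\cdot\nabla\varphi\,dx+\int u^2\varphi\,b\cdot\nabla\varphi\,dx$$
is then estimated by Young's inequality on the first right-hand term (absorbing $\tfrac 12\int \varphi^2|\nabla u|^2$ into the left), and by
$$\Bigl|\int u^2\varphi\,b\cdot\nabla\varphi\,dx\Bigr|\le \|u\|_{L_\infty(B(x_0,r))}^2\,\|\nabla\varphi\|_{L_\infty}\,\|b\|_{L_1(B(x_0,r))}$$
on the second. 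Combining yields $\|\nabla u\|_{L_2(B(x_0,r/2))}^2\le C(r)\,\|u\|_{L_\infty(B(x_0,r))}^2\,(1+\|b\|_{L_1(B(x_0,r))})$, and taking a square root gives the statement.

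The crucial --- and only genuinely non-routine --- observation is that after the integration by parts enabled by $\dv b=0$, the vector field $b$ is integrated against $u^2\varphi\,\nabla\varphi$, which is uniformly bounded, so no $L_2$ information about $b$ is needed on the right-hand side. The main (mild) obstacle is technical: justifying the use of $v=u\varphi^2$ as a test function in the distributional equation, which is exactly why the preliminary $H^1_\loc$ bootstrap is needed.
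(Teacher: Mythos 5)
Your proof is correct and follows essentially the same route as the paper: upgrade $u$ to $H^1_{\rm loc}$ from $bu\in L_{2,\rm loc}$, extend the weak formulation to compactly supported $H^1$ test functions, test with $u\varphi^2$, and use $\dv b=0$ (via $\int b\cdot\nabla(u^2\varphi^2)=0$) to convert the drift term into $\int u^2\varphi\,b\cdot\nabla\varphi$, which only costs $\|b\|_{L_1}$. The resulting Caccioppoli identity and the final estimate are identical to the paper's.
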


\begin{proof}
The claim $u\in H^1_{\rm loc}(\Omega)$ is obvious from $bu\in L_{2,{\rm loc}}(\Omega)$.

This means that
\beq\lbl{distr}
\int\limits_\Omega b\cdot \nabla v dx =0= \int\limits_\Omega(\nabla u-bu)\cdot \nabla vdx
\eeq
for  $v\in H^1 (\Omega)$ compactly supported in $\Omega$.
Let $\eta$ be a smooth bump function such that
\begin{align*}
\eta &= 1 \qquad \text{in } B(x_0,r/2) \\
\eta &= 0 \qquad \text{in } \Omega \setminus B(x_0,r).
\end{align*}
We  take $v=u \eta^2$ in \eqref{distr} to obtain
\begin{align*}
0 &= \int\limits_{B(x_0,r)} |\grad u|^2 \eta^2 + 2 u \eta \grad u \cdot \grad \eta- b \cdot \grad   u u \eta^2 - 2 b \cdot \grad  \eta u^2\eta \dx \\
&= \int\limits_{B(x_0,r)} |\grad u|^2 \eta^2 + 2 u \eta \grad u \cdot \grad \eta - b \cdot \grad \eta u^2 \eta \dx,
\end{align*}
where we have used
\[
0= \int\limits_{B(x_0,r)} b\cdot \nabla(u^2\eta^2)dx =  2 \int\limits_{B(x_0,r)}  b \cdot \grad \eta u^2 \eta + b\cdot\nabla u u \eta^2 dx
\]
with $u^2\eta^2\in H^1 (\Omega)$. Therefore
\begin{align*}
\int\limits_{B(x_0,r)} |\grad u|^2 \eta^2 \dx &= \int\limits_{B(x_0,r)} -2 u \eta \grad u \cdot \grad \eta + b \cdot \grad \eta \, u^2 \eta \dx \\
&\leq \frac 12 \int\limits_{B(x_0,r)} |\grad u|^2 \eta^2 \dx + \int\limits_{B(x_0,r)} 2 u^2 |\grad \eta|^2 + b \cdot \grad \eta \, u^2 \eta \dx
\intertext{and thus}
\frac 12 \int\limits_{B(x_0,r/2)} |\grad u|^2 \dx &\leq \int\limits_{B(x_0,r)} 2 u^2 |\grad \eta|^2 + b \cdot \grad \eta \, u^2 \eta \dx \\
&\leq C(1+||b||_{L_1(B(x_0,r))}) ||u||_{L_\infty}^2.
\end{align*}
\end{proof}

It is worth noting that all statements of Proposition \ref{p:h1-from-linfty} hold true in higher dimensions.

We now find the modulus of continuity for functions satisfying the maximum principle and a bound in $H^1$. Note that in two dimensions, the space $H^1$ is borderline with respect to the Sobolev embeddings to spaces of continuous functions. The monotonicity of $\osc_{\partial B(r)} u$ is the extra assumption used in the theorem below to actually obtain an explicit modulus of continuity.

\begin{theorem} \label{t:2d-continuity}
Let $u \in H^1(B)$ and assume that for any $r \in (0,1)$ the maximum principle holds in $B(r)$:
\begin{align*}
\max_{B(r)} u &= \max_{\partial B(r)} u,\\
\min_{B(r)} u &= \min_{\partial B(r)} u.
\end{align*}
Then $u$ satisfies the following modulus of continuity estimate at the origin
\[ \sup_{x \in B(r)} |u(x)-u(0)| \leq \frac C {\sqrt{-\log r}} ||\grad u||_{L_2(B)} \]
for some constant $C$ independent of $u$.
\end{theorem}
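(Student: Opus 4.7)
The plan is to combine the maximum principle hypothesis with a classical Dirichlet-integral bound on circles, essentially Morrey's argument for monotone-in-the-sense-of-Lebesgue functions. Define
\[
\omega(\rho) := \max_{\partial B(\rho)} u - \min_{\partial B(\rho)} u,
\]
which by hypothesis equals $\osc_{B(\rho)} u$ and is therefore non-decreasing in $\rho$. The first step is to control this circular oscillation by the tangential gradient. Parameterizing $\partial B(\rho)$ by $\theta\in[0,2\pi)$, for a.e.\ $\rho\in(0,1)$ the trace $u(\rho,\,\cdot\,)$ lies in $H^1(\mathbb S^1)$, and by the fundamental theorem of calculus together with Cauchy--Schwarz,
\[
\omega(\rho)^2 \leq \Bigl(\int_0^{2\pi}|\partial_\theta u(\rho,\theta)|\,d\theta\Bigr)^2 \leq 2\pi \int_0^{2\pi}|\partial_\theta u(\rho,\theta)|^2\,d\theta.
\]

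The second step is to integrate this against $d\rho/\rho$ on $[r,1]$, using the polar-coordinates identity $|\nabla u|^2 = |\partial_\rho u|^2 + \rho^{-2}|\partial_\theta u|^2$ and $dx = \rho\,d\rho\,d\theta$:
\[
\int_r^1 \frac{\omega(\rho)^2}{\rho}\,d\rho \;\leq\; 2\pi \int_r^1 \int_0^{2\pi}\frac{|\partial_\theta u|^2}{\rho}\,d\theta\,d\rho \;\leq\; 2\pi \int_B |\nabla u|^2\,dx.
\]
The third step exploits the monotonicity of $\omega$: since $\omega(\rho)\geq\omega(r)$ for $\rho\geq r$,
\[
\omega(r)^2 \log(1/r) \;\leq\; \int_r^1\frac{\omega(\rho)^2}{\rho}\,d\rho \;\leq\; 2\pi\,\|\nabla u\|_{L_2(B)}^2,
\]
which yields $\omega(r) \leq C(-\log r)^{-1/2}\|\nabla u\|_{L_2(B)}$ for $r\in(0,1/e]$ (and the range $r\in(1/e,1)$ is trivial after adjusting the constant).

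Finally, since $\omega(r)\to 0$ as $r\to 0^+$, the nested intervals $[\min_{\partial B(r)}u,\max_{\partial B(r)}u]$ shrink to a single point, which we take as the definition of $u(0)$; then for $x\in B(r)$, the maximum principle gives $|u(x)-u(0)| \leq \omega(r)$, completing the proof. The main delicate point I anticipate is the interpretation of the ``maximum principle'' for a merely $H^1$ function, i.e.\ choosing the right representative so that $\omega(\rho)$ equals the essential oscillation on $\partial B(\rho)$ for a.e.\ $\rho$ and is genuinely monotone; this is a routine issue handled by working with the precise representative (or with essential suprema/infima), and the integral estimate above then only needs to hold for a.e.\ $\rho$, which is exactly the regularity traces on circles provide for $H^1(B)$.
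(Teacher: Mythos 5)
Your proposal is correct and follows essentially the same route as the paper: write the Dirichlet energy in polar coordinates, drop the radial derivative, control the circular oscillation by $\int_0^{2\pi}|\partial_\theta u|^2\,d\theta$ (the paper cites the 1D Sobolev embedding $H^1(\partial B)\subset C^\alpha$ where you unwind it via the fundamental theorem of calculus and Cauchy--Schwarz, but this is the same estimate), and then integrate against $d\rho/\rho$ and use monotonicity of the oscillation from the maximum principle. The only difference is cosmetic, and your remark about the $H^1$ representative and the meaning of $u(0)$ is a sensible addition.
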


\begin{proof}
Let $r \in (0,1)$. We want to estimate $\osc_{B(r)} u = \max_{B(r)} u - \min_{B(r)} u$.
\begin{align*}
\int\limits_{B \setminus B(r)} |\grad u|^2 \dx &= \int\limits_r^1 \int\limits_{\partial B(s)} |\grad u|^2 \dd \sigma \dd s \\
\intertext{since $|\grad u|^2 = u_\sigma^2 + u_\nu^2$  where $u_\sigma$ is the tangential derivative and $u_\nu$ is the normal one,}
&\geq \int\limits_r^1 \int\limits_{\partial B(s)} |u_\sigma|^2 \dd \sigma \dd s\\
\intertext{Rewriting the integral using polar coordinates $(s \theta = \sigma)$,}
&= \int\limits_r^1 \frac 1 s \int\limits_{\partial B} |u_\theta(s\theta)|^2 \dd \theta \dd s. \\
\intertext{Since $H^1(\partial B) \subset C^\alpha(\partial B)$ from the one dimensional Sobolev imbedding,}
&\geq \int\limits_r^1 \frac C s (\osc_{\partial B(s)} u)^2 \dd s.
\intertext{From the maximum principle, $\osc_{\partial B(s)} u$ is monotone in $s$, therefore,}
&\geq \int\limits_r^1 \frac C s (\osc_{\partial B(r)} u)^2 \dd s = (-C \log r) (\osc_{\partial B(r)} u)^2
\end{align*}
Taking square roots of both sides we obtain
\[ \osc_{B(r)} u = \osc_{\partial B(r)} u \leq \frac C {\sqrt{-\log r}} ||\grad u||_{L_2(B)} \]
\end{proof}

Consider now a drift $b\in L_1(B)$ and let $u\in L_\infty(B)$ be a distributional solution to (\ref{ell1}). We are interested in whether $u$ is still a continuous function and, if so, how to estimate its modulus of continuity. We do not know the answer to this question. However, it is in the affirmative if $u$ is an appropriate limit of solutions with $L_2$ drifts, as in Theorem \ref{limit}.


\begin{proof}[Proof of Theorem \ref{limit}.]
The first claim is immediate from the definition of distributional solutions. Moreover,
Proposition \ref{p:h1-from-linfty} and Theorem \ref{t:2d-continuity} show that $u_m$ are locally uniformly bounded in $H^1$ as well as locally uniformly continuous with the  modulus of continuity from \eqref{modcont}, and the second claim follows.
\end{proof}


Finally, we show that that Theorem \ref{limit} does not hold in higher dimensions  in general.





\begin{proof}[Proof of Theorem \ref{T.3.4}.]
 As in the previous section, we will again consider  vector fields with $b(Rx)=Rb(x)$, where $R(x_1,x_2,x_3) \equiv (x_1,x_2,-x_3)$. In addition, $b$ will be axisymmetric with respect to the $x_3$-axis and with no angular component.  Such divergence-free vector fields can again be obtained from a ``stream function'' $H:\mathbb R^+_0\times\mathbb R \to\mathbb R$ with $H(0,z)=0$ as
\beq \lbl{2.1}
b_H(x) \equiv \nabla\times \left[ \frac{H(\rho,z)}{2\rho^2}(-x_2,x_1,0) \right] = \frac{1} {2\rho^2}\left(x_1H_z(\rho,z), x_2H_z(\rho,z), -\rho H_\rho(\rho,z) \right),
\eeq
where $\rho\equiv\sqrt{x_1^2+x_2^2}$ and $z\equiv x_3$.  Notice that again we have $b_H\cdot \nabla H=0=b_H\cdot(x_2,-x_1,0)$, so $H$ is constant on the streamlines of $b_H$, and $b_H$ has no angular component.

We now pick $\alpha\in(\tfrac 23,1)$ and for $\rho^2+z^2<1$ and $\rho\ge 0$ we let
\beq \lbl{2.2}
\til H(\rho,z) \equiv \sgn(z)
 \begin{cases}
\rho^2 z^{-2} & \text{$\rho^\alpha \le |z| $}, \\
z^2 \rho^{-2} & \text{$ |z|^\alpha \le \rho $}, \\
(\rho |z|)^{2(1-\alpha)/(1+\alpha)} & \text{$|z|< \rho^\alpha$ and $\rho< |z|^\alpha$}.
\end{cases}
\eeq
Finally,
for some large $C$ we define $H_0\equiv C \til H$ and $b_0\equiv b_{H_0}$.

Notice that $H_0$ is continuous and vanishes on the axes, and $b_0(Rx)=Rb_0(x)$.  We also have
\beq \lbl{2.3}
-b_0(x)=C\frac x{|x_3|^3} \qquad \text{for $(x_1^2+x_2^2)^{\alpha/2} \le |x_3|$}
\eeq
as well as $b_0\in L_1(B)$ (because $|b_0(x)|\le c|x|^{-2}$ for some $c>0$ due to $\alpha\in(\tfrac 23,1)$).
Moreover, $\til H$ is smooth except on 
\[
P\equiv \left\{ \rho^2+z^2<1 :\, \text{$\rho\ge 0$ and $|z|\in\{\rho^\alpha, \rho^{1/\alpha},0\}$ } \right\},
\]
so $b_0$ is smooth except on
\[
S\equiv \left\{ x\in B :\, \left(\sqrt{x_1^2+x_2^2}\,,x_3 \right)\in P \right\}.
\]
We therefore let $H_\eps$ be smooth such that $H_\eps(\rho,-z)= -  H_\eps(\rho,z)$ and $H_\eps=H_0$ outside the $\eps$-neighborhood of $P$, the vector field $b_\eps\equiv b_{H_\eps}$ is also smooth and $|b_\eps(x)|\le c|x|^{-2}$ on $B$, as well as
\beq \lbl{2.5}
\lim_{\eps\to 0} \|b_\eps- b_0\|_{L_1(B)}= 0.
\eeq
Clearly $\nabla\cdot b_\eps=0$ for $\eps>0$, so \eqref{2.5} gives $\nabla\cdot b_0=0$ in the distributional sense.

For each $\eps>0$ we now construct (smooth) $u_\eps$ using  $b_\eps$ in a way similar to our construction of $u$ using $b$ in the proof of Theorem \ref{T.1.2}.  We are here on $B$ so we set boundary conditions $u_\eps(x)=\sgn(x_3)$ on $\partial B$ and thus consider the stochastic process \[
dX_t^{x,\eps} = -b_\eps(X_t^{x,\eps})dt + \sqrt 2 dB_t.
\]
with $X_0^{x,\eps}=x$ and stopping time
\[
\tau_\eps\equiv \inf \big\{ t\ge 0 \,\big|\, X_t^{x,\eps}\in \partial B \big\}.
\]
We therefore obtain
\beq \lbl{2.6}
-\Delta u_\eps + b_\eps \cdot \nabla u_\eps =0
\eeq
where
\beq \lbl{2.7}
u_\eps(x)=\bbE \left( \sgn \left(  \left(X_{\tau_\eps}^{x,\eps}\right)_3 \right)  \right) = \sgn(x_3) \bbP \left(\left(X_t^{x,\eps}\right)_3 \neq 0 \text{ for all $t\in [0,\tau_\eps]$}\right),
\eeq

Each $u_\eps$ is a smooth solution of \eqref{2.6} and they are uniformly H\" older continuous away from the origin due to $|b_\eps(x)|\le c|x|^{-2}$.  Therefore there is a sequence $\eps_k\to 0$ and $u_0$  such that $u_{\eps_k}\to u_0$ locally uniformly on $B\setminus\{0\}$.  (In fact,
\[
\lim_{\eps\to 0} u_\eps(x)= u_0(x) = \sgn(x_3) \bbP \left(\left(X_t^{x,0}\right)_3 \neq 0 \text{ for all $t\in [0,\tau_0]$}\right)
\]
for any $x\in B$, provided we set $u_0(0)\equiv 0$.)  But this, $\|u_\eps\|_{L_\infty} \le 1$, and \eqref{2.5} show that
\[
\int\limits_{B} (u_0\Delta v + b_0u_0\cdot \nabla v) dx = \lim_{k\to\infty} \int\limits_{B} (u_{\eps_k}\Delta v + b_{\eps_k} u_{\eps_k}\cdot \nabla v) dx = 0
\]
for any  $v\in C_0^\infty(B)$. Thus
\beq \lbl{2.8}
-\Delta u_0 + b_0 \cdot \nabla u_0 =0
\eeq
in the distributional sense. The  proof of Proposition \ref{p:h1-from-linfty} applies to each $u_{\eps_k}$, implying that their weak limit $u_0\in H^1_{\rm loc}(B)$. Since for $\eps\ge 0$ we have $u_\eps(Rx)=-u_\eps(x)$ and $||b_\eps||_{L_1(B)}$ is uniformly bounded, we only need to show
\beq \lbl{2.9}
\lim_{z\downarrow 0} \lim_{\eps\to 0} u_\eps(0,0,z) >0
\eeq
to conclude the proof of both (i) and (ii).

In fact, let us consider instead of $(0,0,z)$ with $z>0$ any $y\in B$ with $y_3>0$ and $\sqrt{y_1^2+y_2^2} \le \tfrac 12 y_3^{1/\alpha}$. Let $K_y$ be the cut-off cone
\[
K_y\equiv \left\{ x\in \bbR^3 :\, \sqrt{x_1^2+x_2^2} \le \frac {(2y_3)^{1/\alpha}}{4y_3}x_3 \right\} \subseteq \bbR^2\times\bbR^+,
\]
with upper and lower base consisting of discs $D_y$, $E_y$ centered at $(0,0,2y_3)$, $(0,0,\tfrac 12 y_3)$ and with radii $\tfrac 12(2y_3)^{1/\alpha}$, $\tfrac 18 (2y_3)^{1/\alpha}$.  Notice that its tip would be at the origin, were it not cut off.  Then $\sqrt{x_1^2+x_2^2} \le x_3^{1/\alpha}$ on $K_y$ because $\alpha>\tfrac 23$, so \eqref{2.3} holds on $K_y\cap B$. Let $\sigma$ be the exit time of $X^{y,\eps}_t$ from $K_y\cap B$, which is the same for all $\eps\lesssim y_3^{1/\alpha}$ because $b_\eps\equiv b_0$ on $K_y\cap B$ in that case.  We will show that
\beq \lbl{2.10}
\bbP(X^{y,\eps}_\sigma \in D_y\cup \partial B) \ge 1-e^{-y_3^{-3+2/\alpha}},
\eeq
provided $C$ from the definition of $H_0$ is large.  This is sufficient since $X^{y,\eps}_\sigma \in D_y\cup \partial B$ means either
\beq \lbl{2.11}
\left(X_t^{y,\eps}\right)_3 \neq 0 \quad \text{ for all $t\in [0,\tau_\eps]$}
\eeq
or $(X^{y,\eps}_\sigma)_3=2y_3$. In the latter case we have $((X^{y,\eps}_\sigma)_1^2 + (X^{y,\eps}_\sigma)_2^2)^{1/2} \le \tfrac 12 (X^{y,\eps}_\sigma)_3^{1/\alpha}$, so we can bootstrap \eqref{2.10} and obtain \eqref{2.11} with probability at least
\[
\prod_{k=1}^{\lfloor -\log_2 y_3 \rfloor} (1-e^{-(2^ky_3)^{-3+2/\alpha}}) \ge \prod_{j=-\infty}^{0} (1-e^{-(2^{j})^{-3+2/\alpha}}) \equiv m >0.
\]
Thus \eqref{2.7} gives $u_\eps(y) \ge m$ for any  $y\in B$ such that $y_3>0$ and $\sqrt{y_1^2+y_2^2} \le \tfrac 12 y_3^{1/\alpha}$  and any $\eps\lesssim y_3^{1/\alpha}$.  The claim of \eqref{2.9} now follows immediately.

It remains to prove \eqref{2.10} for some large $C$  independent of $y$.  The point here is that $X^{y,\eps}_t$ starts well inside $K_y$ (specifically, $\dist(y,\partial K_y)\ge dy_3^{1/\alpha}$ for some $d\in(0,1)$) and the (strong) drift $-b$ quickly pushes it towards $D_y$ while the Brownian term will not affect this picture much during the short time needed to reach $D_y$, at least with probability close to 1. We have
\beq \lbl{2.12}
X^{y,\eps}_\sigma=y-\int_0^\sigma b_0(X^{y,\eps}_t)dt + B_\sigma,
\eeq
as well as
\beq \lbl{2.13}
\bbP( |B_t|< dy_3^{1/\alpha} \text{ for all $t\in[0,8C^{-1}y_3^3]$}) \ge 1-e^{-y_3^{-3+2/\alpha}},
\eeq
provided $C$ is large enough. Since the vector $-b_0(X^{y,\eps}_t)$ points `inside' the mantle of $K_y$ for $t\in[0,\sigma)$ (because of \eqref{2.3} and the fact that the cut-off tip of $K_y$ is the origin) and has a positive third component, and $\dist(y,\partial K_y)\ge dy_3^{1/\alpha}$, this means that with probability at least $1-e^{-y_3^{-3+2/\alpha}}$, the process $X^{y,\eps}_t$ cannot exit $K_y$ through the mantle or the bottom $E_y$ before time $8C^{-1}y_3^3$. But we have $\left(-b_0(X^{y,\eps}_t)\right)_3\ge C(2y_3)^{-2}$ for $t\in[0, \sigma]$, so \eqref{2.12}, \eqref{2.13}, and $y_3 +C(2y_3)^{-2} 8C^{-1}y_3^3 -cy_3^{1/\alpha}\ge 2y_3$ yield
\[
\bbP(X^{y,\eps}_\sigma \in D_y\cup \partial B \text{ and } \sigma\le 8C^{-1}y_3^3) \ge 1-e^{-y_3^{-3+2/\alpha}}.
\]
This proves \eqref{2.10} and the result follows.
\end{proof}


\end{document}